\renewcommand{\eprint}[1]{\href{https://arxiv.org/abs/#1}{#1}}
\DeclareMathOperator{\GL}{\mathrm{GL}}
\DeclareMathOperator{\Hom}{Hom}
\DeclareMathOperator{\Lie}{Lie}
\newcommand{\fg}{\frak{g}}
\newtheorem{Thm}{Theorem}[section]
\newtheorem{Lem}[Thm]{Lemma}
\newtheorem{Prop}[Thm]{Proposition}
\newtheorem{Cor}[Thm]{Corollary}
\theoremstyle{definition}
\newtheorem{Def}[Thm]{Definition}
\theoremstyle{remark}
\newtheorem{Rem}[Thm]{Remark}
\newtheoremstyle{named}{}{}{\itshape}{}{\bfseries}{.}{.5em}{#1 #3}
\theoremstyle{named}
\def\C{\mathbb{C}}
\def\Z{\mathbb{Z}}
\def\P{\mathbb{P}}
\def\fb{\mathfrak{b}}
\def\g{\mathfrak{g}}
\def\Frenkel:2013uda{\mathfrak{h}}
\def\cF{\mathcal{F}}
\def\cL{\mathcal{L}}
\def\cO{\mathcal{O}}
\def\cV{\mathcal{V}}
\def\cW{\mathcal{W}}
\def\bo{\textbf{o}}
\def\=>{\Longrightarrow}
\def\to{\longrightarrow}
\def\o+{\oplus}
\def\bo+{\bigoplus}
\def\<{\langle}
\def\>{\rangle}
\def\({\left(}
\def\){\right)}
\def\^{\wedge}
\def\+{\dagger}
\def\dd[#1,#2]{\frac{d#1}{d#2}}
\def\del[#1,#2]{\frac{\partial #1}{\partial #2}}
\def\over[#1]{\overline{#1}}
\def\vec[#1]{\overrightarrow{#1}}
\def\mr@ignsp#1 {\ifx\:#1\@empty\else #1\expandafter\mr@ignsp\fi}%
\newcommand{\multiref}[1]{\begingroup
\xdef\mr@no@sparg{\expandafter\mr@ignsp#1 \: }%
\def\mr@comma{}%
\@for\mr@refs:=\mr@no@sparg\do{\mr@comma\def\mr@comma{,}\ref{\mr@refs}}%
\endgroup}
\newcommand{\hypref}[2]{\ifx\href\asklFrenkel:2013udaas #2\else\href{#1}{#2}\fi}
\newcommand{\Secref}[1]{Section~\multiref{#1}}
\tikzset{->-/.style={decoration={
  markings,
  mark=at position .5 with {\arrow{latex}}},postaction={decorate}}}
\tikzset{
    >=latex
    }
\newcommand{\wt}{\widetilde}
\newcommand{\nc}{\newcommand}
\nc{\on}{\operatorname}
\nc{\la}{\lambda}
\nc{\wh}{\widehat}
\nc{\ghat}{\wh\g}
\nc{\mb}{\mathbf}
\begin{document}
\title[$q$-Opers, $QQ$-systems, and Bethe ansatz II:
Generalized Minors]{$q$-Opers, $QQ$-systems, and Bethe ansatz II:\\
Generalized Minors}

\author[P. Koroteev]{Peter Koroteev}
\address{
Department of Mathematics,
University of California,
Berkeley, CA 94720, USA
and 
Rutgers University, Piscataway, NJ, 08854, USA\newline
 Email: \href{mailto:pkoroteev@math.berkeley.edu}{pkoroteev@math.berkeley.edu}\newline 
 \href{https://math.berkeley.edu/~pkoroteev/}{https://math.berkeley.edu/$\sim$pkoroteev}
}

\author[A.M. Zeitlin]{Anton M. Zeitlin}
\address{
          Department of Mathematics, 
          Louisiana State University, 
          Baton Rouge, LA 70803, USA\newline
Email: \href{mailto:zeitlin@lsu.edu}{zeitlin@lsu.edu},\newline
 \href{http://math.lsu.edu/~zeitlin}{http://math.lsu.edu/$\sim$zeitlin}
}

\date{\today}

\numberwithin{equation}{section}

\begin{abstract}

In this paper, we describe a certain kind of $q$-connections on a projective line, namely $Z$-twisted $(G,q)$-opers with regular singularities using the language of generalized minors. In part one \cite{Frenkel:2020} we explored the correspondence between these $q$-connections and $QQ$-systems/Bethe Ansatz equations. Here we associate to a $Z$-twisted $(G,q)$-oper a class of meromorphic sections of a $G$-bundle, satisfying certain difference equations, which we refer to as $(G,q)$-Wronskians. Among other things, we show that the $QQ$-systems and their extensions emerge as the relations between generalized minors, thereby putting the Bethe Ansatz equations in the framework of cluster mutations known in the theory of double Bruhat cells. 
\end{abstract}

\maketitle

\setcounter{tocdepth}{1}
\tableofcontents

\section{Introduction}
The concept of opers, specific connections on principal $G$-bundles ($G$ is a simple complex Lie group), was introduced by Beilinson and Drinfeld \cite{Beilinson:2005} in the context of the geometric Langlands correspondence. 
The idea of an oper goes back to the work of Drinfeld and Sokolov, where the local formulas for such connections emerged as Lax operators for KdV systems in the context of the celebrated Drinfeld-Sokolov reduction \cite{Drinfeld:1985}.

An important example of Geometric Langlands correspondence \cite{frenkel2007langlands} discovered and extensively studied by E. Frenkel and his collaborators \cite{Feigin:1994in,Frenkel:aa,Frenkel:ab} is between $G$-opers on the projective line with regular singularities and trivial monodromy and the Bethe equations, describing the spectrum of the Gaudin model for the Lie algebra of the Langlands dual group  $^LG$.

Recently a $q$-deformation of this example has been discovered, first, for $SL(N)$ in \cite{KSZ} and then in the case of simple simply connected complex $G$ in \cite{Frenkel:2020}, following the local consideration related to the $q$-analogue of the Drinfeld-Sokolov reduction \cite{1998CMaPh.192..631S} and related $q$-difference operators \cite{Mukhin_2005}. We refer to the analog of an oper connection as $(G,q)$-oper which has the structure of the $q$-connection introduced by Baranovsky and Ginzburg \cite{Baranovsky:1996}. 

On the other side of the correspondence, the Gaudin model was replaced by the Bethe ansatz equations for XXZ models, the integrable models based on quantum groups. A fascinating intermediate object needed for establishing the correspondence is the so-called $QQ$-system, a functional equation that produces the Bethe equations upon certain nondegeneracy conditions. The $QQ$-system previously emerged in the study of representations of quantum affine algebras \cite{Bazhanov:1998dq,Frenkel:2016} as well as the ODE/IM correspondence \cite{MRV1,MRV2}.

It turned out, that the concept of a $(G,q)$-oper and its treatment of the $QQ$-systems and its extensions appeared to be an effective tool in the study of quantum integrable systems such as in quantum/classical duality \cite{KSZ}, bispectral duality and even in enumerative geometry \cite{Koroteev:2017aa,Koroteev:2020tv,Koroteev:2021a} and physics \cite{Gaiotto:2013bwa,Bullimore:2015fr}. There also exists a deformation of the geometric Langlands correspondence in the example above, known as the quantum $q$-Langlands \cite{Aganagic:2017smx}.

Let us be more concrete about the explicit structure of $(G,q)$-opers as introduced in \cite{Frenkel:2020}. One can locally think about the $q$-oper connection locally as an element $A(z)\in G(z)\equiv G(\mathbb{C}(z))$. 
The so-called Miura condition implies that this $q$-connection is preserved by a certain reduction to the Borel 
subgroup. The zero monodromy condition is replaced in this case by the {\it $Z$-twisted} condition $A(z)=g(qz)Zg^{-1}(z)$, where $g(z)\in G(z)$, and $Z\in H=B/[B,B]\subset G$ is an element of the Cartan subgroup referred to as the twist parameter. For a regular $Z$ there is exactly $|W_G|$ (here $W_G$ is a Weyl group of $G$)  $Z$-twisted $(G,q)$-Miura opers associated to a given $(G,q)$-oper. 
To establish correspondence with the $QQ$-systems and as a byproduct, the Bethe ansatz equations, one introduces a milder condition by requiring this condition to hold on $(G,q)$-oper-invariant two-dimensional subspaces within fundamental highest weight (lowest weight in this manuscript) representations. In this case, one applies $Z$-twisted condition to $2\times 2$ matrices $A_i(z)$ ($GL(2,q)$-opers), one for every fundamental weight, thus producing a system of functional equations known as the $QQ$-system. In \cite{Frenkel:2020} we referred to a $q$-connection with this milder $Z$-twisted condition as Miura-Pl\"ucker $(G,q)$-oper.
We also introduced quantum B\"acklund transformations, following \cite{Mukhin_2005} for Miura-Pl\"ucker $(G,q)$-opers ($q$-gauge transformations of a certain kind), which produces another Miura-Pl\"ucker $(G,q)$-oper from a given one corresponding to $w(Z)$-twist, $w\in W_G$, provided that certain nondegeneracy conditions on the $QQ$-system hold. This effectively produced a new $QQ$-system for every $w\in W_G$, generating altogether what we call the {\it full $QQ$-system} (see \cite{Ekhammar:2020} for related studies on the full $QQ$-system). If the nondegeneracy conditions repeated themselves for each simple reflection until we reach the maximal element $w_0$ then, as we have shown in \cite{Frenkel:2020}, the corresponding Miura-Pl\"ucker $q$-oper becomes $Z$-twisted Miura $q$-oper. 

At the same time, we established in \cite{Koroteev:2020tv}, that in the case of $G=SL(N)$ these nondegeneracy conditions on the parts of the full $QQ$-system are actually redundant. We obtained that by following another approach to $(SL(N), q)$-opers, which involves the associated bundle produced by the defining representation. That approach heavily relies on the fact that the defining representation is exact.  The elements of the full $QQ$-system are produced by certain minors of the $q$-deformation of the Wronskian matrix, which is constructed from the components of a certain section of the line bundle (this section produces locally the entire fiber of the associated bundle via the action of the $(SL(N), q)$-oper connection \cite{KSZ}). The functional relations of the $QQ$-system emerge from certain determinant identities known since the 19th century between the corresponding minors. This led us to conjecture in \cite{Koroteev:2020tv} that for a general simple simply connected $G$, the $QQ$-system is produced from the relations between generalized minors, as studied in the works of Berenstein, Fomin, and Zelevinsky \cite{Berenstein_1997,Fomin:wy,FZ,Berenstein_2005}. However, the $(G,q)$-Wronskian matrix construction for $G$ was not clear, since one does not have the luxury of $SL(N)$ case. The cluster interpretation of Bethe equations (using the TQ-relation) was given previously by Hernandez and Leclerc in \cite{Hernandez:2016}.

This is exactly what we devote this paper to. Namely, we define the {\it  $(G,q)$-Wronskians} as meromorphic sections of a $G$-bundle by introducing the system of $q$-difference equations for the associated bundles corresponding to fundamental weights. We establish the correspondence between classes of $(G,q)$-Wronskians and $Z$-twisted Miura-Pl\"ucker opers, so that the elements of the $QQ$-system emerge as generalized minors (\{$\Delta_{w\cdot\omega_i, \omega_i}$\}, $w \in W_G$ in the notation of \cite{FZ}) of $(G,q)$-Wronskians. This reformulation, in particular, leads to the equivalence between Miura-Pl\"ucker $(G,q)$-opers and $Z$-twisted Miura $(G,q)$-opers as it was in the case of $SL(N)$. Finally, for each class of $(G,q)$-Wronskian we introduce a universal $(G,q)$-Wronskian, which in the case of $SL(N)$ coincides with the $q$-Wronskian matrix, modulo multiplication by a diagonal matrix.

\vskip.1in

\noindent\textbf{Future Directions.}
We hope that our results can be useful in a deeper understanding of the ODE/IM correspondence \cite{Bazhanov:1994ft,Bazhanov:1996dr,Bazhanov:1998dq,Dorey:2007zx} (see \cite{FF:kdv} for related ideas). 
In the nutshell, the ODE/IM relates vacuum eigenvalues of Baxter operators of the quantum KdV model associated with affine Lie algebra $\widehat{\mathfrak{g}}$ and spectral determinants of certain singular differential operators associated with affine opers associated with the Langlands dual $^L\widehat{\mathfrak{g}}$. 
It was shown in \cite{MRV1,MRV2} that for the standard quantum KdV system, the spectral determinants of the corresponding Strum-Liouville differential operators can be identified with solutions of the $QQ$-system (albeit with different analyticity conditions on the entire Q-functions).

As was already pointed out in \cite{Frenkel:2020}, our results anticipate an intriguing relationship between $(G,q)$-opers which we are discussing here, and the differential operators from the ODE/IM. Recall that according to \cite{MRV1}, Section 4, for every fundamental highest weight representation of ${\mathfrak{g}}$ there exists a pair of normalized (singular at the origin) solutions of the linear ODE $\chi^{(i)}(x), \varphi^{(i)}(x)$ which are the eigenvectors of the associated monodromy matrix to the ODE in question, which are associated (in a nontrivial way) to the top two weights in the representation. The main point of interest in ODE/IM correspondence is the study of the so-called $\Psi^{(i)}$-solutions for each such representation, with a certain prescribed asymptotic behavior at infinity. It turns out that each 
$\Psi^{(i)}$ is a linear combination of $\chi^{(i)}$ and $\varphi^{(i)}$ with coefficients being generators of the $QQ$-system (in this incarnation they are referred to as spectral determinants) and some terms corresponding to lower weights.
That allows us to speculate that one can relate 
the q-connection $A(z)$ of the $(G,q)$-opers to the monodromy matrix of the ODE from \cite{MRV1}. Indeed, in our construction of the Miura-Pl\"ucker $q$-oper (see formula (\ref{odeim}) and the Theorem \ref{1to1wrop}) the decomposition of  $\Psi^{(i)}$ solution in terms of $\chi^{(i)}(x), \varphi^{(i)}(x)$ corresponds to 
exactly the first terms in the decomposition of the action of a $(G,q)$-Wronskian (associated to a Miura $(G,q)$-oper) on the highest weight vector. Thus there seems to be an interesting duality between $(G,q)$-opers and affine opers for $^L\widehat{\mathfrak{g}}$ -- the former objects are related to q-difference operators, the latter to differential operators. We plan to pursue this idea in future publications.

\vskip.1in

\noindent\textbf{Structure of the Paper.}
This paper is organized as follows. In \Secref{regsing} we set up the group-theoretic data which will be needed for our construction. In \Secref{Sec:MiuraQQ} we review basic definitions and theorems of \cite{Frenkel:2020} using the new conventions. The final \Secref{Sec:GeneralizedWronskians} contains new results about generalized minors and quantum Wronskians as well as an illustration of these results for $G=SL(r+1)$.

\vskip.1in
\noindent\textbf{Acknowledgements.}
P.K. is partially supported by AMS Simons Travel Grant. A.M.Z. is partially supported by Simons Collaboration Grant, Award ID: 578501, and NSF grant DMS-2203823. We are grateful to Edward Frenkel for his support and valuable comments.

\section{Group-theoretic data}    \label{regsing} 
Let $G$ be a connected, simply connected, simple algebraic group of
rank $r$ over $\mathbb{C}$.  We fix a Borel subgroup $B_-$ with
unipotent radical $N_-=[B_-,B_-]$ and a maximal torus $H\subset B_-$.
Let $B_+$ be the opposite Borel subgroup containing $H$.  Let $\{
\alpha_1,\dots,\alpha_r \}$ be the set of positive simple roots for
the pair $H\subset B_+$.  Let $\{ \check\alpha_1,\dots,\check\alpha_r
\}$ be the corresponding coroots; the elements of the Cartan matrix of
the Lie algebra $\fg$ of $G$ are given by $a_{ij}=\langle
\alpha_j,\check{\alpha}_i\rangle$. The Lie algebra $\fg$ has Chevalley
generators $\{e_i, f_i, \check{\alpha}_i\}_{i=1, \dots, r}$, so that
$\fb_-=\Lie(B_-)$ is generated by the $f_i$'s and the
$\check{\alpha}_i$'s and $\fb_+=\Lie(B_+)$ is generated by the $e_i$'s
and the $\check{\alpha}_i$'s.  Let $\omega_1,\dots\omega_r$ be the
fundamental weights, defined by $\langle \omega_i,
\check{\alpha}_j\rangle=\delta_{ij}$.

Let $W_G=N(H)/H$ be the Weyl group of $G$. Let $w_i\in W$, $(i=1,
\dots, r)$ denote the simple reflection corresponding to
$\alpha_i$. We also denote by $w_0$ be the longest element of $W$, so
that $B_+=w_0(B_-)$.  Recall that a Coxeter element of $W$ is a
product of all simple reflections in a particular order. It is known
that the set of all Coxeter elements forms a single conjugacy class in
$W_G$. We will fix once and for all (unless otherwise specified) a
particular ordering $(\alpha_{i_1},\ldots,\alpha_{i_r})$ of the simple
roots. Let $c=w_{i_1}\dots w_{i_r}$ be the Coxeter element associated
to this ordering. In what follows (unless otherwise specified), all
products over $i \in \{ 1, \dots, r \}$ will be taken in this order;
thus, for example, we write $c=\prod_i w_i$.  We also fix
representatives $s_i\in N(H)$ of $w_i$. In particular, $s=\prod_i s_i$
will be a representative of $c$ in $N(H)$.

Although we have defined the Coxeter element $c$ using $H$ and $B_-$,
it is in fact the case that the Bruhat cell $BcB$ makes sense for any
Borel subgroup $B$.  Indeed, let $(\Phi,\Delta)$ be the root system
associated to $G$, where $\Delta$ is the set of simple roots as
above and $\Phi$ is the set of all roots.  These data give a
realization of the Weyl group of $G$ as a Coxeter group, i.e., a pair
$(W_G,S)$, where $S$ is the set of Coxeter generators $w_i$ of $W_G$
associated to elements of $\Delta$. Now, given any Borel subgroup $B$,
set $\fb = \on{Lie}(B)$. Then the dual of the vector space
$\fb/[\fb,\fb]$ comes equipped with a set of roots and
simple roots, and this pair is canonically isomorphic to the root
system $(\Phi,\Delta)$~\cite[\S 3.1.22]{CG}.  The definition of the
sets of roots and simple roots on this space involves a choice of
maximal torus $T\subset B$, but these sets turn out to be independent
of the choice. Accordingly, the group $N(T)/T$ together with the set
of its Coxeter generators corresponding to these simple roots is
isomorphic to $(W_G,S)$ as a Coxeter group. Under this isomorphism,
$w\in W_G$ corresponds to an element of $N(T)/T$ by the following
rule: we write $w$ as a word in the Coxeter generators of $W_G$
corresponding to elements of $S$ and then replace each Coxeter
generator in it by the corresponding Coxeter generator of
$N(T)/T$. Accordingly, the Bruhat cell $BwB$ is well-defined for any
$w\in W_G$.

\section{Miura $(G,q)$-opers with regular singularities and $QQ$-systems}\label{Sec:MiuraQQ}

In this section, we reformulate basic definitions and results from \cite{Frenkel:2020} using the notation and conventions which needed in this paper.

\subsection{$q$-connections and the structure of $(G,q)$-opers}
Given a principal $G$-bundle $\cF_G$ over $\P^1$ (in the Zariski
topology), let $\cF_G^q$ denote its pullback under the map $M_q: \P^1
\to \P^1$ sending $z\mapsto qz$. A meromorphic $(G,q)$-{\em
  connection} on a principal $G$-bundle $\cF_G$ on $\P^1$ is a section
$A$ of $\Hom_{\cO_{U}}(\cF_G,\cF_G^q)$, where $U$ is a certain Zariski open
dense subset of $\P^1$. We can always choose $U$ so that the
restriction $\cF_G|_U$ of $\cF_G$ to $U$ is isomorphic to the trivial
$G$-bundle. Choosing such an isomorphism, i.e. a trivialization of
$\cF_G|_U$, we also obtain a trivialization of
$\cF_G|_{M_q^{-1}(U)}$. Using these trivializations, the restriction
of $A$ to the Zariski open dense subset $U \cap M_q^{-1}(U)$ can be
written as a section of the trivial $G$-bundle on $U \cap M_q^{-1}(U)$,
and hence as an element $A(z)$ of $G(z)$.
Changing the trivialization of $\cF_G|_U$ via $g(z) \in G(z)$ changes
$A(z)$ by the following $q$-{\em gauge transformation}:
\begin{equation}    \label{gauge tr}
A(z)\mapsto g(qz)A(z)g(z)^{-1}.
\end{equation}
This shows that the set of equivalence classes of pairs $(\cF_G,A)$ as
above is in bijection with the quotient of $G(z)$ by the $q$-gauge
transformations \eqref{gauge tr}.

We define a $(G,q)$-oper in the following way (see \cite{Frenkel:2020})\footnote{In this paper we use slightly different conventions than in \cite{Frenkel:2020}, namely  we intechange $B_+$ and $B_-$}

\begin{Def}    \label{qop}
  A meromorphic $(G,q)$-{\em oper} (or simply a $q$-{\em oper}) on
  $\mathbb{P}^1$ is a triple $(\cF_G,A,\cF_{B_+})$, where $A$ is a
  meromorphic $(G,q)$-connection on a $G$-bundle $\cF_G$ on
  $\mathbb{P}^1$ and $\mathcal{F}_{B_+}$ is a reduction of $\cF_G$
  to $B_+$ satisfying the following condition: there exists a Zariski
  open dense subset $U \subset \P^1$ together with a trivialization
  $\imath_{B_+}$ of $\mathcal{F}_{B_+}$ such that the restriction of
  the connection $A: \cF_G \to \cF_G^q$ to $U \cap M_q^{-1}(U)$,
  written as an element of $G(z)$ using the trivializations of
  ${\mathcal F}_G$ and $\cF_G^q$ on $U \cap M_q^{-1}(U)$ induced by
  $\imath_{B_+}$, takes values in the Bruhat cell $B_+(\C[U \cap
  M_q^{-1}(U)]) c B_+(\C[U \cap M_q^{-1}(U)])$.
\end{Def}

Since $G$ is assumed to be simply connected, any $q$-oper connection
$A$ can be written (using a particular trivialization $\imath_{B_+}$)
in the form
\begin{equation}    \label{qop1}
A(z)=n'(z)\prod_i (\phi_i(z)^{-\check{\alpha}_i} \, s_i )n(z),
\end{equation}
where $\phi_i(z) \in\C(z)$ and  $n(z), n'(z)\in N_+(z)$ are such that
their zeros and poles are outside the subset $U \cap M_q^{-1}(U)$ of
$\P^1$.

Notice that in the above characterization one can view $(G,q)$-oper locally on $U \cap M_q^{-1}(U)$ as 
an equivalence class of the $q$-connections of the form \eqref{qop1} under the action of $q$-gauge transformations in $N_+(z)$, since the Cartan action is $q$-gauge fixed by the choice of $\{\phi_i(z)\}^r_{i=1}$. One can describe the set of representatives of these equivalence classes of $q$-connections as follows (see \cite{1998CMaPh.192..631S} and \cite{Frenkel:2020}).

\begin{Thm}\label{canform}
There exist a unique element $u(z) \in N_+(z)$ such that 
\begin{eqnarray}
u(qz)A(z)u^{-1}(z)\in N^s_+(z)\prod_i(\phi_i(z)^{-\check{\alpha}_i} \, s_i ),
\end{eqnarray} 
where $N_+^s=N_+\cap sN_-s^{-1}$.
\end{Thm}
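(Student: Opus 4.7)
The plan is to construct $u(z)$ recursively along the principal height grading on the set of positive roots $\Phi^+$, exploiting the factorization $N_+ = N_+^s \cdot N_+^{s,-}$, where $N_+^{s,-} := N_+ \cap sN_+s^{-1}$. Since $c$ has length $r$, the subgroup $N_+^s$ is generated by the root subgroups $U_\alpha$ with $c^{-1}(\alpha) < 0$ and has dimension $r$, while $N_+^{s,-}$ is generated by those $U_\alpha$ with $c^{-1}(\alpha) > 0$ and parametrizes the gauge directions transverse to the slice of canonical forms. Writing $w(z) := \prod_i(\phi_i(z)^{-\check\alpha_i}s_i) = \psi(z)\, s$ with $\psi(z) \in H(z)$ and $s = \prod_i s_i$, one verifies that conjugation by $w(z)^{-1}$ sends each root vector $X_\alpha \in \g_\alpha$ to a rational scalar multiple of $X_{c^{-1}(\alpha)}$, which lies in a different root space because a Coxeter element fixes no root.

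I would establish existence by induction on height. Seek $u(z)$ as an ordered product $u(z) = \prod_{\alpha \in \Phi^+} u_\alpha(z)$ over positive roots arranged by increasing height, with $u_\alpha \equiv 1$ whenever $c^{-1}(\alpha) < 0$, so that $u$ automatically lies in $N_+^{s,-}(z)$. For each $\alpha$ with $c^{-1}(\alpha) > 0$, the factor $u_\alpha(z)$ will be determined by demanding that the $\g_\alpha$-component of $u(qz) A(z) u(z)^{-1} \cdot w(z)^{-1}$, in its root-space decomposition, vanishes. The essential observation, already visible at the infinitesimal level, is that a perturbation of $u_\alpha$ contributes its $q$-shifted argument $u_\alpha(qz)$ only to the $\g_{c^{-1}(\alpha)}$-projection in the $w$-twisted frame, while the $\g_\alpha$-projection depends only on $u_\alpha(z)$ itself, with no $q$-shift. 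Consequently, the equation for $u_\alpha(z)$ is purely algebraic and admits a unique rational solution in terms of the entries of $A(z)$ and the lower-height components of $u$ already constructed. The resulting $m(z) \in N_+^s(z)$ is then read off from the $\g_\beta$-projections for $\beta$ with $c^{-1}(\beta) < 0$.

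Uniqueness would follow by applying the same root-by-root analysis to $v := u_1 u_2^{-1}$: if $u_1$ and $u_2$ both satisfy the conclusion, then $v$ satisfies an identity of the shape $v(qz)\, m_2(z)\, w(z) = m_1(z)\, w(z)\, v(z)$ with $m_i \in N_+^s(z)$; inspecting this component-by-component under the $w$-conjugation shift forces $v_\alpha(z) = 0$ for every $\alpha \in \Phi^+$ by induction on height, hence $v = 1$ and $m_1 = m_2$.

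The main obstacle will be the combinatorial bookkeeping of the recursion: one must track how nonlinear cross-terms, arising from the multiplicative structure of $u$ and its interaction with $n(z)$ and $n'(z)$ in $A(z) = n'(z) w(z) n(z)$, propagate through the height filtration, and one must verify that the elimination at height $h$ does not revive any component previously cancelled at lower heights. This triangularity is ensured by the nilpotency of $\mathfrak{n}_+$, together with the fact that $N_+^{s,-}$ parametrizes precisely the gauge directions transverse to the canonical slice $N_+^s(z)w(z)$.
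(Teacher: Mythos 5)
There is a genuine gap, and it is already visible for $G=SL(2)$: your ansatz that $u(z)$ may be sought inside $N_+^{s,-}(z)=N_+(z)\cap sN_+(z)s^{-1}$ (i.e.\ $u_\alpha\equiv 1$ whenever $c^{-1}(\alpha)<0$) is false. For $SL(2)$ conjugation by $s$ swaps $N_+$ and $N_-$, so $N_+^s=N_+$ while $N_+^{s,-}=\{1\}$; your scheme then outputs $u=1$ and asserts that every $A=n'(z)\,\phi(z)^{-\check\alpha}s\,n(z)$ already lies in $N_+(z)\phi(z)^{-\check\alpha}s$, which fails whenever $n\neq 1$ (the unique correct answer there is $u=n$, which kills the lower-triangular part of $A(z)u(z)^{-1}$). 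A dimension count shows the same thing in general: with the $\phi_i$ fixed, the cell $N_+(z)w(z)N_+(z)$, $w(z)=\prod_i\phi_i(z)^{-\check\alpha_i}s_i$, has dimension $|\Phi^+|+r$, so the $r$-dimensional slice $N_+^s(z)w(z)$ can only be a transversal for the action of the full group $N_+(z)$, not of the $(|\Phi^+|-r)$-dimensional subgroup $N_+^{s,-}(z)$. The root of the problem is that you only impose vanishing of the $\mathfrak{g}_\alpha$-components of $u(qz)A(z)u(z)^{-1}w(z)^{-1}$ for $\alpha$ with $c^{-1}(\alpha)>0$, but this element is a priori not even in $N_+(z)$: since conjugation by $w(z)$ sends the root subgroup $U_\beta$ to $U_{c(\beta)}$, the factor $w(z)\bigl(n(z)u(z)^{-1}\bigr)w(z)^{-1}$ has components along the $r$ negative root spaces $\mathfrak{g}_{c(\beta)}$ with $\beta>0$, $c(\beta)<0$. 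Killing those is the first and principal step of the proof: it forces $n(z)u(z)^{-1}\in N_+(z)\cap s^{-1}N_+(z)s$, i.e.\ it pins down the components of $u$ along $N_+\cap s^{-1}N_-s$ (generically nonzero and equal to those of $n$) — exactly the type of components your ansatz discards.

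A second, smaller, problem is the bookkeeping of the $q$-shift and the ordering of the recursion. After reducing to $A(z)=\tilde n(z)w(z)$ and $u$ supported on $\{\beta>0:\,c(\beta)>0\}$, the $\mathfrak{g}_\gamma$-component of $u(qz)\tilde n(z)\,w(z)u(z)^{-1}w(z)^{-1}$ receives the shifted term $u_\gamma(qz)$ from the left factor and the unshifted term $u_{c^{-1}(\gamma)}(z)$ from the conjugated right factor — the opposite of what you assert — so a single equation couples two \emph{different} components, one shifted and one not. The system therefore does not triangularize along the height filtration; it triangularizes along the $\langle c^{-1}\rangle$-orbits of roots, starting from the roots $\gamma>0$ with $c(\gamma)<0$ (where the term $u_\gamma(qz)$ is absent, so the equation is purely algebraic) and propagating via $\gamma\mapsto c^{-1}\gamma$ until the orbit exits $\Phi^+$; freeness of the $c$-action on $\Phi$ guarantees the two coupled components are never the same function, which is what makes the solution rational rather than the solution of a genuine $q$-difference equation. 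Note finally that the paper itself gives no proof of this theorem — it is imported from \cite{1998CMaPh.192..631S} and \cite{Frenkel:2020}, where the argument has the two-step structure described above (first kill the image of the gauge parameter under $w(\cdot)w^{-1}$ in $N_-$, then normalize the remaining $N_+$-part along Coxeter orbits), and both steps use the full group $N_+(z)$.
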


\subsection{Miura $(G,q)$-opers}

The Miura $(G,q)$-opers are $q$-opers together
with an additional datum: a reduction of the underlying $G$-bundle to
the Borel subgroup $B_-$ (opposite to $B_+$) that is preserved by the
oper $q$-connection.

\begin{Def}    \label{Miura}
  A {\em Miura $(G,q)$-oper} on $\mathbb{P}^1$ is a quadruple
  $(\cF_G,A,\cF_{B_+},\cF_{B_-})$, where $(\cF_G,A,\cF_{B_+})$ is a
  meromorphic $(G,q)$-oper on $\P^1$ and $\cF_{B_-}$ is a reduction of
  the $G$-bundle $\cF_G$ to $B_-$ that is preserved by the
  $q$-connection $A$.
\end{Def}

Forgetting $\cF_{B_-}$, we associate a $(G,q)$-oper to a given Miura
$(G,q)$-oper. We will refer to it as the $(G,q)$-oper underlying the
Miura $(G,q)$-oper.

Suppose we are given a principal $G$-bundle $\cF_G$ on any smooth
complex manifold $X$ equipped with reductions $\cF_{B_+}$ and
$\cF_{B_-}$ to $B_+$ and $B_-$ respectively. We then assign to any
point $x \in X$ an element of the Weyl group $W_G$. To see this, first,
note that the fiber
$\cF_{G,x}$ of $\cF_G$ at $x$ is a $G$-torsor with reductions
$\cF_{B_+,x}$ and $\cF_{B_-,x}$ to $B_+$ and $B_-$
respectively. Choose any trivialization of $\cF_{G,x}$, i.e. an
isomorphism of $G$-torsors $\cF_{G,x} \simeq G$. Under this
isomorphism, $\cF_{B_+,x}$ gets identified with $aB_+ \subset G$ and
$\cF_{B_-,x}$ with $bB_-$. Then, $a^{-1}b$ is a well-defined element of
the double quotient $B_+\backslash G/B_-$, which is in bijection with
$W_G$. Hence, we obtain a well-defined element of $W_G$.

We will say that $\cF_{B_+}$ and $\cF_{B_-}$ have  {\em generic
  relative position} at $x \in X$ if the element of $W_G$ assigned to
them at $x$ is equal to $1$. This means that the corresponding element
$a^{-1}b$ belongs to the open dense Bruhat cell $B_+B_- \subset
G$.

\begin{Thm} \cite{Frenkel:2020}  \label{gen rel pos}
  For any Miura $(G,q)$-oper on $\mathbb{P}^1$, there exists an open
  dense subset $V \subset \P^1$ such that the reductions $\cF_{B_+}$
  and $\cF_{B_-}$ are in generic relative position for all $x \in V$.
\end{Thm}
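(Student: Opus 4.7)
My plan is to show that the locus $V$ is nonempty by reducing to a local computation in a trivialization compatible with $\cF_{B_+}$ and then arguing by contradiction using the oper form of $A$. First I would pick a Zariski open dense $U \subset \mathbb{P}^1$ on which $\cF_G$ is trivial and the oper trivialization $\imath_{B_+}$ is defined, so that $A(z) \in G(z)$ has the form \eqref{qop1}; shrinking $U$ if necessary so that $\cF_{B_-}|_U$ is trivial, the $B_-$-reduction is encoded by a rational section $[g]\colon U \to G/B_-$, and the Miura condition translates into the $q$-difference relation $A(z) \cdot [g(z)] = [g(qz)]$ in $G/B_-$. The relative position of $\cF_{B_+}$ and $\cF_{B_-}$ at $x$ is then the Bruhat cell in $G/B_-$ containing $[g(x)]$, so the locus $V$ of trivial relative position is the preimage of the open Bruhat cell $B_+ B_-/B_-$, hence open in $\mathbb{P}^1$. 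Because $\mathbb{P}^1$ is irreducible and the Bruhat stratification is finite, there is a unique $w_{\mathrm{gen}} \in W_G$ with $[g(z)] \in B_+ \dot{w}_{\mathrm{gen}} B_-/B_-$ for generic $z$, and the theorem reduces to showing $w_{\mathrm{gen}} = 1$.

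Arguing by contradiction, suppose $w_{\mathrm{gen}} = w \neq 1$. Parameterizing the Bruhat cell $B_+ \dot w B_-$ by unipotent factors, I can choose a rational representative $g(z) = n(z) \dot w$ modulo $B_-(z)$, with $n(z) \in N_+(z)$ and $\dot w \in N(H)$ a fixed lift of $w$. Then the $q$-gauge transformation by $h(z) = n(z)^{-1} \in N_+(z) \subset B_+(z)$ preserves the oper form of $A$ (since $B_+(z)$-gauges are allowed by Definition~\ref{qop}) and reduces the section to the constant flag $\dot w \cdot B_-$. The new connection $\tilde A(z) = n(qz)^{-1} A(z) n(z)$ is still of oper form, and the Miura condition now reads
\[
\dot w^{-1} \tilde A(z) \dot w \in B_-(z), \qquad \text{i.e.} \qquad \tilde A(z) \in \dot w B_-(z) \dot w^{-1},
\]
for all generic $z$, confining $\tilde A(z)$ to a fixed Borel subgroup of $G$.

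The contradiction will come from combining this with the oper form itself. Writing $\tilde A(z) = \tilde n'(z) \prod_i \phi_i(z)^{-\check\alpha_i} s_i \cdot \tilde n(z)$ with $\tilde n, \tilde n' \in N_+(z)$ and conjugating by $\dot w^{-1}$, the element
\[
\dot w^{-1} \tilde A(z) \dot w = \bigl(\dot w^{-1} \tilde n'(z) \dot w\bigr) \cdot \dot w^{-1}\Bigl(\prod_i \phi_i^{-\check\alpha_i} s_i\Bigr) \dot w \cdot \bigl(\dot w^{-1} \tilde n(z) \dot w\bigr)
\]
has middle factor representing the Weyl element $w^{-1} c w \neq 1$, a conjugate of the Coxeter element $c$. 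Applying the Bruhat decomposition $G = \bigsqcup_{v \in W_G} B_- \dot v B_-$, this product then lies in a cell with nontrivial Weyl index, so cannot lie in $B_- = B_- \cdot 1 \cdot B_-$, contradicting the Miura condition; hence $w_{\mathrm{gen}} = 1$ and $V$ is open dense.

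The main obstacle is the last step: pinning down the Bruhat class of the conjugated product. The factors $\dot w^{-1} \tilde n(z) \dot w$ and $\dot w^{-1} \tilde n'(z) \dot w$ lie in the unipotent subgroup $\dot w^{-1} N_+(z) \dot w$, which for $w \neq 1$ mixes positive and negative root subgroups, so care is required to verify that they cannot combine with the lift of $w^{-1} c w$ to bring the product into $B_-$. Concretely, this amounts to the fact that in the oper form the simple-root entries $\phi_i(z)$ are nonzero, and after conjugation they land in positions forcing a nontrivial Bruhat class with respect to $B_-$; a model check in the rank-one case $G = SL_2$ (where the $(1,2)$-entry of $\dot w_0^{-1} \tilde A \dot w_0$ is precisely $\phi_1$) already illustrates the mechanism. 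Once this combinatorial input --- a standard consequence of double Bruhat decompositions --- is in place, the proof is complete.
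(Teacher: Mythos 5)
Your setup and the first two reductions are sound and follow the standard route: trivialize via $\imath_{B_+}$, record the $B_-$-reduction as a rational section $z\mapsto g(z)B_-$ of $G/B_-$, note that the relative position is constant equal to some $w_{\mathrm{gen}}$ on an open dense set, and then use an $N_+(z)$-valued gauge transformation (which preserves the oper form \eqref{qop1}, including the nonvanishing of the $\phi_i$) to make the preserved flag the constant point $\dot w B_-$. This correctly reduces the theorem to the group-theoretic statement that, for $w\neq 1$, no element of $N_+\prod_i\phi_i^{-\check\alpha_i}s_i N_+$ with all $\phi_i\neq 0$ can lie in the Borel $\dot w B_-\dot w^{-1}$; note that Theorem~\ref{gen elt} is exactly the complementary $w=1$ case, where the intersection with $B_-$ is nonempty and explicitly parametrized.

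The gap is the step you yourself flag as ``the main obstacle'', and it is a genuine one, not a routine verification. The inference ``the middle factor represents $w^{-1}cw\neq 1$, hence the product lies in a Bruhat cell with nontrivial Weyl index'' is invalid: the outer factors $\dot w^{-1}\tilde n(z)\dot w$ and $\dot w^{-1}\tilde n'(z)\dot w$ lie in $\dot w^{-1}N_+\dot w$, which is contained in neither $B_+$ nor $B_-$, and multiplying a cell representative by such elements does not preserve its $(B_-,B_-)$-Bruhat class. Nor is the needed fact ``a standard consequence of double Bruhat decompositions'': it is \emph{false} if the double coset $B_+\dot cB_+$ is replaced by that of a general Weyl element. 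For instance, in $SL_3$ the element $1+tE_{21}$ ($t\neq 0$) lies in $B_+\dot s_1B_+\cap \dot s_2 B_-\dot s_2^{-1}$, so any correct argument must use that $c$ is a Coxeter element, i.e.\ that \emph{every} simple reflection occurs and every $\phi_i$ is nonzero. Moreover the closures $\overline{B_+\dot cB_+}$ and $\dot wB_-\dot w^{-1}$ do intersect (e.g.\ in $SL_3$ with $w=s_1$, take $P=E_{12}$ in the flag picture), so one must play the \emph{open} conditions defining the oper cell against the \emph{closed} conditions defining $\dot wB_-\dot w^{-1}$ simultaneously; your rank-one check does not expose this because for $SL_2$ the only nontrivial $w$ is $w_0=c$. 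The actual proof (in \cite{Frenkel:2020}, which this paper cites rather than reproves) supplies exactly this missing ingredient, by choosing a simple root $\alpha_i$ with $w^{-1}(\alpha_i)<0$ and showing that the resulting nonzero ``$f_i$-component'' of the oper connection, which exists precisely because $\phi_i\neq 0$ and $s_i$ occurs exactly once in $c$, is incompatible with membership in $\dot wB_-\dot w^{-1}$. Until that step is carried out, your argument establishes openness of $V$ but not its nonemptiness.
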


Let $U$ be a Zariski open dense subset on $\P^1$ as in Definition
  \ref{qop}. Choosing a trivialization $\imath_{B_+}$ of $\cF_G$ on $U
  \cap M_q^{-1}(U)$, we can write the $q$-connection $A$ in the form
  \eqref{qop1}. On the other hand, using the $B_-$-reduction
  $\cF_{B_-}$, we can choose another trivialization of $\cF_G$ on $U
  \cap M_q^{-1}(U)$ such that the $q$-connection $A$ acquires the form
  $\wt{A}(z) \in B_-(z)$. Hence the following Corollary holds, which is a local description of the Theorem above.
  
\begin{Cor}\cite{Frenkel:2020} 
There exists $g(z) \in B_+(z) N_-(z)\subset G(z)$ such that
\begin{equation}    \label{connecting}
g(zq) n'(z)\prod_i (\phi_i(z)^{-\check{\alpha}_i} \, s_i )n(z)
g(z)^{-1} = \wt{A}(z) \in B_+(z).
\end{equation}
\end{Cor}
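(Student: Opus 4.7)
\emph{Proof plan.} The strategy is to construct $g(z)$ as the transition function between the oper trivialization $\imath_{B_+}$ and a second trivialization adapted to $\cF_{B_-}$, and then to read off its Bruhat cell position from the generic relative position theorem.

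First, I would shrink $U$ so that $U\cap M_q^{-1}(U)$ lies inside the open dense subset $V$ provided by Theorem \ref{gen rel pos}; on this smaller set the reductions $\cF_{B_+}$ and $\cF_{B_-}$ are both defined and in generic relative position. The oper trivialization $\imath_{B_+}$ is already given and puts $A(z)$ in the form \eqref{qop1}. Because the $B_-$-reduction $\cF_{B_-}$ is preserved by the $q$-connection $A$, I can independently pick a rational trivialization $\imath_{B_-}$ of $\cF_G$ compatible with $\cF_{B_-}$; in that frame the $q$-connection is represented by a rational map $\wt A(z)$ taking values in the Borel subgroup stabilizing the $B_-$-reduction (matching the convention of the stated corollary).

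Next, let $g(z)\in G(z)$ be the rational transition function relating $\imath_{B_+}$ and $\imath_{B_-}$. The $q$-gauge transformation law \eqref{gauge tr}, applied directly to $A(z)$ written via \eqref{qop1}, yields the identity \eqref{connecting}. To pin down the Bruhat position of $g(z)$, I would compute pointwise on $V$ the $B_+\backslash G/B_-$-valued invariant of the pair $(\cF_{B_+,z},\cF_{B_-,z})$: in the frame $\imath_{B_+}$ the first reduction is the trivial coset $1\cdot B_+$ while the second is $g(z)^{-1}B_-$ (up to direction of transition), so the resulting double coset is $B_+\,g(z)^{\pm 1}\,B_-$. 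Generic relative position forces this coset to be the open cell $B_+\cdot B_-$, so $g(z)\in B_+B_-$. Using $H\subset B_+$ to absorb the torus factor of $B_-$, we rewrite $B_+B_-=B_+\cdot N_-$, whence $g(z)\in B_+(z)\,N_-(z)$.

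Finally, the multiplication map $B_+\times N_-\to B_+N_-\subset G$ is an isomorphism of varieties onto the open Bruhat cell, so the pointwise factorization on $V$ upgrades to a global rational factorization $g(z)=b(z)n(z)$ with $b(z)\in B_+(z)$ and $n(z)\in N_-(z)$. The main obstacle is convention-matching: choosing the direction of the change of frame so that the Bruhat factor lands in $B_+N_-$ rather than the opposite cell $N_-B_+$, and ensuring that the Borel in which $\wt A(z)$ sits matches the one used to impose preservation of $\cF_{B_-}$. Once the geometric content of Theorem \ref{gen rel pos} is invoked, the remaining verifications reduce to tracking which frame appears on which side of the gauge transformation.
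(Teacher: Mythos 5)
Your argument is essentially the paper's own: the Corollary is stated there as the local form of Theorem \ref{gen rel pos}, with $g(z)$ the transition function between the $\imath_{B_+}$-trivialization (giving the form \eqref{qop1}) and a trivialization adapted to $\cF_{B_-}$, and generic relative position placing $g(z)$ in the open cell $B_+(z)N_-(z)$ — exactly the steps you spell out. Your side remark on convention-matching is also apt, since the paper's surrounding text says $\wt{A}(z)\in B_-(z)$ (consistent with Proposition \ref{gen rel pos1}) while the displayed Corollary writes $B_+(z)$, an apparent typo.
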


As an immediate consequence, we obtain the following Proposition.

\begin{Prop}   \cite{Frenkel:2020}  \label{gen rel pos1}
For any Miura $(G,q)$-oper on $\mathbb{P}^1$, there exists a
trivialization of the underlying $G$-bundle $\cF_G$ on an open
dense subset of $\P^1$ for which the oper $q$-connection has the form
\begin{equation}    \label{genmiura}
A(z)\in N_+(z)\prod_i([\phi_i(z)]^{-\check{\alpha}_i}s_i
)N_+(z) \; \cap \; B_-(z).
\end{equation}
\end{Prop}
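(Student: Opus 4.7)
The plan is to produce a trivialization of $\cF_G$ which is simultaneously adapted to both Borel reductions, so that the conditions of lying in the oper Bruhat cell and of preserving $\cF_{B_-}$ become simultaneously visible in the same trivialization.

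First, by Theorem~\ref{gen rel pos}, the reductions $\cF_{B_+}$ and $\cF_{B_-}$ are in generic relative position on an open dense subset $V \subset \P^1$. Starting from the $B_+$-adapted trivialization $\imath_{B_+}$ of Definition~\ref{qop}, in which the oper $q$-connection takes the form $A(z)=n'(z)\prod_i (\phi_i(z)^{-\check\alpha_i} s_i)n(z)$, the reduction $\cF_{B_-}$ is represented by a meromorphic section $a(z)B_- \subset G(z)/B_-(z)$. Over $V$ the generic position condition means $a(z) \in N_+(z)B_-(z)$, so using the algebraic isomorphism $N_+\times B_-\xrightarrow{\sim} N_+B_-$ onto the open Bruhat cell, I would uniquely factor $a(z)=n_+(z)b_-(z)$ with $n_+(z)\in N_+(z)$ and $b_-(z)\in B_-(z)$.

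Next, I would apply the $q$-gauge transformation \eqref{gauge tr} by $b(z):=n_+(z)^{-1}\in N_+(z)$. Because $b(z) \in N_+(z) \subset B_+(z)$, the new trivialization is still adapted to $\cF_{B_+}$, and by construction the $B_-$-reduction has been normalized to the standard $B_-$-bundle. In this trivialization, the transformed connection is
\Eqn{
b(qz)A(z)b(z)^{-1} = \bigl[n_+(qz)^{-1}n'(z)\bigr] \prod_i(\phi_i(z)^{-\check\alpha_i} s_i) \bigl[n(z)n_+(z)\bigr],
}
which still lies in $N_+(z)\prod_i(\phi_i(z)^{-\check\alpha_i} s_i) N_+(z)$ since the two bracketed factors are in $N_+(z)$ and the Cartan factors $\phi_i^{-\check\alpha_i}$ are untouched (no torus component was used). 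At the same time, since the Miura condition forces $A$ to preserve $\cF_{B_-}$ and the latter is now standard, the transformed connection must take values in $B_-(z)$. Intersecting these two inclusions gives the proposition.

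The main subtle point is to check that the pointwise factorization $a(z)=n_+(z)b_-(z)$ coming from generic position can be arranged as a meromorphic factorization on an open dense subset of $\P^1$. This follows because the multiplication map $N_+ \times B_- \to N_+ B_- \subset G$ is an isomorphism of affine algebraic varieties with an algebraic inverse, so composing that inverse with the meromorphic section $a(z)$ on $V$ yields meromorphic $n_+(z)$ and $b_-(z)$ on a possibly smaller open dense subset of $\P^1$, which is all that is needed.
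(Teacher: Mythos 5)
Your proof is correct and follows essentially the same route as the paper: Theorem~\ref{gen rel pos} gives generic relative position, the transition datum between the $B_+$- and $B_-$-adapted trivializations is factored through the open cell, and the $N_+$-part is used as a $q$-gauge transformation that preserves the oper form while making $\cF_{B_-}$ standard (this is the content of the Corollary preceding the Proposition, from which the paper deduces the statement "as an immediate consequence"). Your added remarks — that the factorization over the big cell is algebraic and hence meromorphic in $z$, and that only a unipotent (not torus) factor is used so the Cartan part of the oper form is untouched — are exactly the right points to check.
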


The following Theorem allows us to describe the explicit form of the Miura $(G,q)$-opers:

\begin{Thm}  \cite{Frenkel:2020}  \label{gen elt}
Every element of the set
$N_+(z)\prod_i([\phi_i(z)]^{-\check{\alpha}_i}s_i)N_+(z) \; \cap \; B_-(z)$ can be written
in the form
\begin{equation}    \label{gicheck}
\prod_i \Big[g_i(z)\Big]^{-\check{\alpha}_i}e^{\frac{\phi_i(z) t_i}{g_i(z)}f_i}, \qquad
g_i(z)\in \mathbb{C}(z)^{\times},
\end{equation}
where each $t_i$ is determined by the lifting $s_i$.
\end{Thm}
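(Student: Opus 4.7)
The plan is to proceed by induction on the rank $r$. For the base case $r=1$, working inside a rank-one subgroup $G_{\alpha}\simeq SL_2$ (or $PGL_2$), a direct computation with the $SL_2$-Bruhat decomposition shows that every element of $N_+\,\phi^{-\check{\alpha}}s\,N_+\cap B_-$ has the form
$$\begin{pmatrix} g^{-1} & 0 \\ \phi\, t & g \end{pmatrix} \;=\; g^{-\check{\alpha}}\,e^{\phi t f/g}$$
for a unique $g\in F^\times$, where $t=t(s)\in F^\times$ is determined by the chosen lift $s\in N(H)$; for the standard lift $s=\left(\begin{smallmatrix} 0 & -1 \\ 1 & 0 \end{smallmatrix}\right)$ one has $t=1$.

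For the inductive step, write a generic element as
$$x \;=\; n_0 \cdot \phi_1^{-\check{\alpha}_1} s_1 \cdot \prod_{i=2}^{r}\bigl(\phi_i^{-\check{\alpha}_i} s_i\bigr) \cdot n_{r+1} \;\in\; B_-,$$
with $n_0,n_{r+1}\in N_+$, and decompose $n_0 = n_0' \cdot u_1$, where $u_1\in U_{\alpha_1}$ and $n_0'$ lies in the subgroup $N_+^{(1)}$ generated by $U_\alpha$ for $\alpha\in\Phi^+\setminus\{\alpha_1\}$. Since $w_1$ permutes $\Phi^+\setminus\{\alpha_1\}$, conjugation by $\phi_1^{-\check{\alpha}_1}s_1$ preserves $N_+^{(1)}$, so $n_0'$ can be transported across this factor and absorbed into the $N_+$-piece on its right. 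The rank-one remainder $u_1\cdot \phi_1^{-\check{\alpha}_1}s_1$ lies in $G_{\alpha_1}$, and the base case identifies it with $g_1^{-\check{\alpha}_1}\,e^{\phi_1 t_1 f_1/g_1}$ times a residual element of $U_{\alpha_1}$, which is in turn absorbed further to the right. After this peeling, the remaining factor lies in the standard Levi subgroup $L_1$ of $G$ with simple roots $\alpha_2,\dots,\alpha_r$ and has the form
$$\tilde n_0 \cdot \prod_{i=2}^{r}\bigl(\phi_i^{-\check{\alpha}_i} s_i\bigr) \cdot \tilde n_{r+1} \;\in\; N_+^{L_1}\prod_{i=2}^{r}\bigl(\phi_i^{-\check{\alpha}_i} s_i\bigr)N_+^{L_1} \cap B_-^{L_1},$$
to which the inductive hypothesis applies with Coxeter element $w_2\cdots w_r$, yielding the remaining factors $\prod_{i=2}^{r}g_i^{-\check{\alpha}_i}\,e^{\phi_i t_i f_i/g_i}$ and completing the factorization.

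The main technical obstacle is justifying the clean split between the rank-one $SL_{2,\alpha_1}$-factor and the Levi-$L_1$-factor: one must check that the exponential in the first peeled-off piece involves only the simple generator $f_1$, and not a mixture with lower negative root vectors, and that the residual element genuinely lives in $L_1$ rather than in a larger subgroup of $B_-$. This hinges on the uniqueness of the Bruhat decomposition together with the combinatorial fact that each simple reflection appears exactly once in the Coxeter element $c$: no factor further to the right can reintroduce an $\alpha_1$-component, so the entire $\alpha_1$-content of $x$ is captured by the peeled-off $g_1^{-\check{\alpha}_1}\,e^{\phi_1 t_1 f_1/g_1}$, and the induction closes.
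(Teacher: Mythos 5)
The paper itself gives no proof of this theorem --- it is quoted from the companion paper [Frenkel:2020] --- so I can only assess your argument on its own terms. The induction-on-rank skeleton (peel off the rank-one factor attached to $\alpha_1$, reduce to the Levi $L_1$ generated by $\alpha_2,\dots,\alpha_r$) is viable, and your base case is correct. The problem is that the two claims you defer to ``uniqueness of the Bruhat decomposition together with the fact that each simple reflection appears exactly once in $c$'' are precisely where the content of the theorem sits, and that justification does not do the job. First, for the base case to apply to the peeled piece $u_1\,\phi_1^{-\check\alpha_1}s_1$ and produce $g_1\in F^\times$ you must know $u_1\neq 1$; this has nothing to do with the combinatorics of $c$ and everything to do with the hypothesis $x\in B_-$. (One clean way: the factors to the right of $n_0$ send the highest weight vector $\nu^+_{\omega_1}$ to a nonzero multiple of $f_1\nu^+_{\omega_1}$ plus lower-weight terms; only the $U_{\alpha_1}$-component of $n_0$ can return this to $\nu^+_{\omega_1}$, and the resulting coefficient must equal the nonzero value $\Delta^{\omega_1}(x)$ of $\omega_1$ on the Cartan part of $x\in B_-$. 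Hence $u_1\neq 1$.)

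Second, and more seriously, after peeling you are left with $y=e^{be_1}\,\tilde n_0'\,\prod_{i\geq 2}(\phi_i^{-\check\alpha_i}s_i)\,n_{r+1}$, whose outer unipotent factors lie in the full $N_+$ of $G$, not in $N_+^{L_1}$. Your assertion that ``no factor further to the right can reintroduce an $\alpha_1$-component'' is false as stated: $\tilde n_0'$ and $n_{r+1}$ generically have nontrivial components along every $U_\beta$ with $\beta\geq\alpha_1$ (e.g.\ $\beta=\alpha_1+\alpha_2$), and nothing about $c$ being square-free in the simple reflections removes them. What actually forces $y$ into $L_1$ is, once again, the condition $y\in B_-$: writing $N_+=U_{P_1}\rtimes N_+^{L_1}$ for the parabolic $P_1=L_1U_{P_1}$, one has $y=v_1\,l\,v_2$ with $v_1,v_2\in U_{P_1}$ and $l\in N_+^{L_1}\prod_{i\geq 2}(\phi_i^{-\check\alpha_i}s_i)N_+^{L_1}$; since $L_1$ normalizes $U_{P_1}$ this reads $y=l\,v_3$ with $v_3\in U_{P_1}$, while $y\in B_-\subset U_{P_1}^- L_1$, and the unique factorization of the open cell $U_{P_1}^- L_1 U_{P_1}$ forces $v_3=1$ and $y=l\in L_1$. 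Only after this does the inductive hypothesis apply. So your strategy can be completed, but the step you wave at is the theorem, and it needs the parabolic big-cell (equivalently, generalized-minor) argument rather than the stated combinatorial observation.
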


\subsection{Miura $(G,q)$-opers with regular singularities}

It is clear that $(G,q)$-opers depend on the choice of the lift of the Coxeter element $c$ to $G(z)$, namely the functions $\{\phi_i(z)\}_{i=1,\dots, r}$. We will be interested in the case when these are 
polynomial functions.

Let $\{ \Lambda_i(z) \}_{i=1,\ldots,r}$ be a collection of
nonconstant polynomials.

\begin{Def}  \cite{Frenkel:2020}   \label{d:regsing}\label{MiuraRS}
  A $(G,q)$-{\em oper with regular singularities determined by $\{
    \Lambda_i(z) \}_{i=1,\ldots,r}$} is a $q$-oper on $\P^1$ whose
  $q$-connection \eqref{qop1} may be written in the form
\begin{equation}    \label{Lambda}
A(z)= n'(z)\prod_i([\Lambda_i(z)]^{-\check{\alpha}_i} \, s_i)n(z), \qquad
n(z), n'(z)\in N_+(z).
\end{equation}
  {\em A Miura $(G,q)$-oper with regular singularities determined by
polynomials $\{ \Lambda_i(z) \}_{i=1,\ldots,r}$} is a Miura
  $(G,q)$-oper such that the underlying $q$-oper has
regular singularities determined by $\{ \Lambda_i(z)
\}_{i=1,\ldots,r}$.
\end{Def}

Recall Theorem \ref{gen elt}. Observe that we can choose liftings
$s_i$ of the simple reflections $w_i \in W_G$ in such a way that
$t_i=1$ for all $i=1,\ldots,r$. From now on, we will only consider
such liftings.

Theorem \ref{gen elt} leads to the following statement: 

\begin{Thm} \cite{Frenkel:2020}    \label{gen elt1}
For every Miura $(G,q)$-oper with regular singularities determined by
the polynomials $\{ \Lambda_i(z) \}_{i=1,\ldots,r}$, the underlying
$q$-connection can be written in the form \eqref{form of A}:
\begin{equation}    \label{form of A}
A(z)=\prod_i
[g_i(z)]^{-\check{\alpha}_i} \; e^{\frac{\Lambda_i(z)}{g_i(z)}f_i}, \qquad
g_i(z) \in \C(z)^\times.
\end{equation}
\end{Thm}

\subsection{$Z$-twisted $(G,q)$-opers} Next, we consider a class of (Miura) $q$-opers that are gauge
equivalent to a constant element of $G$ (as $(G,q)$-connections).  Let
$Z$ be an element of the maximal torus $H$. Since $G$ is
simply connected, we can write
\begin{equation}    \label{Z}
Z = \prod_{i=1}^r \zeta_i^{-\check\alpha_i}, \qquad \zeta_i \in
\C^\times.
\end{equation}
\begin{Def} \cite{Frenkel:2020}    \label{Ztwoper}
1)  A {\em $Z$-twisted $(G,q)$-oper} on $\mathbb{P}^1$ is a $(G,q)$-oper
  that is equivalent to the constant element $Z \in H \subset H(z)$
  under the $q$-gauge action of $G(z)$, i.e. if $A(z)$ is the
  meromorphic oper $q$-connection (with respect to a particular
  trivialization of the underlying bundle), there exists $g(z) \in
  G(z)$ such that
\begin{eqnarray}    \label{Ag}
A(z)=g(qz)Z g(z)^{-1}.
\end{eqnarray}

2) A {\em $Z$-twisted Miura $(G,q)$-oper} is a Miura $(G,q)$-oper on
$\mathbb{P}^1$ that is equivalent to the constant element $Z \in H
\subset H(z)$ under the $q$-gauge action of $B_+(z)$, i.e. $g(z)\in B_+(z)$ in (\ref{Ag}).
\end{Def}

The following Proposition relates $Z$-twisted$(G,q)$-opers and their Miura counterparts.

\begin{Prop}    \label{Z prime}
1)  Let $Z \in H$. For any $Z$-twisted $(G,q)$-oper $(\cF_G,A,\cF_{B_+})$
  and any choice of $B_-$-reduction $\cF_{B_-}$ of $\cF_G$ preserved
  by the oper $q$-connection $A$, the resulting Miura $(G,q)$-oper is
  $Z'$-twisted for a particular $Z' \in W_G \cdot Z$.\\
2) If $Z$ is a regular element of $G$, then for each $Z' \in W_G \cdot
Z$, there is a unique $B_+$-reduction on the $(G,q)$-oper
$(\cF_G,A,\cF_{B_+})$ making it into a $Z'$-twisted Miura
$(G,q)$-oper.
\end{Prop}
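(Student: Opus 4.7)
The strategy is to analyze the $Z$-twisted equation $A(z) = g(qz) Z g(z)^{-1}$ in tandem with the preserved $B_-$-reduction, exploiting the Bruhat decomposition of $G(\mathbb{C}(z))$ relative to the pair $(B_+, B_-)$.

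For part 1, by Theorem \ref{gen rel pos} the $B_-$-reduction is generically in position $1$ relative to $\cF_{B_+}$, so in the oper trivialization $\cF_{B_-}$ has the form $\sigma(z) = b(z) B_-$ with $b(z) \in B_+(z)$. A gauge transformation by $b(z)^{-1} \in B_+(z)$ preserves the oper structure, trivializes $\cF_{B_-}$, and puts $A(z)$ in the Miura cell $A(z) \in B_-(z)$ of Proposition \ref{gen rel pos1}; the $Z$-twist persists as $A(z) = \tilde g(qz) Z \tilde g(z)^{-1}$ with $\tilde g \in G(z)$. Applying the Bruhat decomposition $\tilde g(z) = b_+(z)\, \dot w\, b_-(z)$ over $\mathbb{C}(z)$ singles out a unique $w \in W_G$; setting $Z' = \dot w Z \dot w^{-1} = w(Z)$ and using the preservation condition to rearrange $\dot w\, b_-(\cdot)\, \dot w^{-1}$ compatibly, one obtains $A(z) = g'(qz) Z' g'(z)^{-1}$ with $g'(z) \in B_+(z)$, realizing the $Z'$-twisted Miura condition.

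For part 2, assume $Z$ is regular. The map $w \mapsto w(Z)$ is a bijection $W_G \to W_G \cdot Z$ by triviality of the Weyl stabilizer of a regular element of $H$. For existence, for each $w \in W_G$ construct the $B_-$-reduction by pulling back the $Z$-invariant constant section $\dot w\, B_- \in G/B_-$ along the gauge $g(z)$ (invariance holding because $Z \in H$ normalizes $B_-$); the argument of part 1 then certifies that the resulting Miura $(G,q)$-oper is $w(Z)$-twisted. For uniqueness, if two $B_-$-reductions yield $Z'$-twisted Miura opers with $B_+(z)$-valued gauges $g_1', g_2'$, the ratio $h(z) := g_1'(z)^{-1} g_2'(z)$ lies in $B_+(z)$ and satisfies the $q$-difference equation $h(qz) = Z' h(z) (Z')^{-1}$. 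Decomposing $\mathfrak{b}_+$ into $\mathrm{Ad}(Z')$-weight spaces and invoking regularity (every nontrivial positive root character of $Z'$ differs from $1$), the only $\mathbb{C}(z)$-rational solutions lie in $H$ and are $q$-invariant constants, which do not alter the associated $B_-$-reduction.

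The principal obstacle is the Bruhat-theoretic step in part 1: rigorously checking that the Miura constraint $A \in B_-(z)$ together with the preservation of the trivial $B_-$-reduction forces the $\dot w\, b_-\, \dot w^{-1}$ tail of the decomposition to be absorbable into $B_+(z)$, so that a genuinely $B_+(z)$-valued gauge witnesses the $Z'$-twist. A closely related challenge in part 2 is the $q$-difference rigidity argument, which crucially requires regularity of $Z$; without it the $G$-centralizer of $Z$ exceeds $H$ and supplies additional solutions, potentially breaking uniqueness.
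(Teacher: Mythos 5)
Note first that the paper does not actually prove this Proposition here; it is imported verbatim from \cite{Frenkel:2020}, so your attempt is measured against the standard argument there. Your overall strategy (turn preservation of $\cF_{B_-}$ into a $q$-difference condition, apply a Bruhat decomposition over $\C(z)$, and use twisted $q$-difference rigidity for uniqueness) is the right one, but the two steps you yourself flag as delicate are genuinely broken. In part 1 the target is wrong: the gauge witnessing the Miura twist must preserve the trivialized Miura reduction $\cF_{B_-}$ and hence lie in $B_-(z)$, as in \eqref{vz}, \eqref{vdots} and Definition \ref{ZtwMP} (the ``$B_+(z)$'' in Definition \ref{Ztwoper}(2) is a leftover of the convention swap). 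A $B_+(z)$-valued witness is in fact impossible: $g'\in B_+(z)$ and $Z'\in H$ would force $A\in B_+(z)$, while preservation of the trivial $B_-$-reduction forces $A\in B_-(z)$, so $A\in H(z)$, contradicting \eqref{form of A} for nonconstant $\Lambda_i$. Correspondingly, $\tilde g=b_+\dot w b_-$ is the wrong decomposition and the ``absorption'' step cannot be completed from it. What does work is the normal form $\tilde g(z)=v(z)\,\dot w\,n(z)$ with $v\in B_-(z)$ and $n\in\bigl(N_-\cap w^{-1}N_+w\bigr)(z)$: since $Z$ normalizes this unipotent subgroup, $A=v(qz)\,Z'\,p(z)\,v(z)^{-1}$ with $p(z)=\dot w\bigl(Z^{-1}n(qz)Zn(z)^{-1}\bigr)\dot w^{-1}\in N_+(z)$, and $A\in B_-(z)$ gives $v(qz)^{-1}Av(z)\in B_-(z)\cap Z'N_+(z)=\{Z'\}$, hence $p\equiv 1$ and $A=v(qz)Z'v(z)^{-1}$ with $v\in B_-(z)$, $Z'=w(Z)$.

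In part 2 your existence construction ($\sigma=g(z)\dot w B_-$, preserved because $Z\dot wB_-=\dot wB_-$) is correct, but the rigidity argument does not close. Regularity of $Z'$ only gives $\alpha(Z')\neq 1$ for roots $\alpha$, whereas the coordinate equations on the unipotent part of $h$ read $f(qz)=\alpha(Z')f(z)$ up to lower-order terms, and this has the nonzero rational solution $f=cz^{n}$ whenever $\alpha(Z')=q^{n}$ with $n\neq 0$ --- a situation fully compatible with regularity. To kill these solutions one needs $\alpha(Z)\notin q^{\Z}$, i.e.\ the standing assumption \eqref{assume} (propagated to all positive roots), not regularity alone; this is precisely why the paper imposes \eqref{assume} throughout. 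There is also a logical slip: $g_1'$ and $g_2'$ live in different trivializations (one per reduction), so their ``ratio'' is not directly meaningful; the clean argument classifies preserved reductions intrinsically as morphisms $\tau:\P^1\to G/B_-$ satisfying $\tau(qz)=Z\tau(z)$, notes that $\tau(0)$ is a $Z$-fixed point, hence of the form $\dot wB_-$ by regularity, and then uses the $q$-difference equation in the big cell around $\dot wB_-$ to force $\tau$ to be that constant.
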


From now on we will focus on the case when $Z$ is regular (see \cite{Frenkel:2020} for the statement of the Proposition \ref{Z prime} in the general situation). 

Consider a Miura $(G,q)$-oper with regular singularities determined by
polynomials $\{ \Lambda_i(z) \}_{i=1,\ldots,r}$. Thus, the underlying $(G,q)$-connection can be written in
the form \eqref{form of A}. We can define the associate $(H,q)$ connection which has the form:
\begin{equation}    \label{AH}
A^H(z)=\prod_ig_i(z)^{-\check{\alpha}_i}.
\end{equation}
We call $A^H(z)$ the \emph{associated Cartan $q$-connection} of the
Miura $q$-oper $A(z)$. 
If our Miura $q$-oper is $Z$-twisted, then we also have $A(z)=v(qz)Z v(z)^{-1}$, where
$v(z)\in B_-(z)$.  Since $v(z)$ can be written as
\begin{equation}    \label{vz}
v(z)=
\prod_i y_i(z)^{-\check{\alpha}_i} n(z), \qquad n(z)\in N_-(z), \quad
y_i(z) \in \C(z)^\times,
\end{equation}
Thus 
\begin{equation}    \label{giyi}
g_i(z)=\zeta_i\frac{y_i(qz)}{y_i(z)}\,.
\end{equation}

\subsection{ Miura-Pl\"ucker $(G,q)$-opers}\label{Sec:MiuraPlucker}
In this section, we will relax $Z$-twisted condition on the Miura $(G,q)$-opers.
  
Consider $\omega_i$, the $i$th fundamental weight of $G$.
Let $V^-_i$ be the irreducible representation of $G$ with lowest weight
$-\omega_i$ with respect to $B_-$. It comes equipped with a line $L^-_i
\subset V^-_i$ of lowest weight vectors stable under the action of
$B_+$. Let
$\nu^-_{\omega_i}$ be a generator of the line $L^-_i \subset V_i^-$. It is a
vector of weight $-\omega_i$ with respect to our maximal torus $H
\subset B_-$. The subspace of $V^-_i$ of weight $-\omega_i+\alpha_i$
is one-dimensional and is spanned by $e_i \cdot
\nu^-_{\omega_i}$. Therefore, the two-dimensional subspace $W_i$ of
$V^-_i$ spanned by the weight vectors $\nu^-_{\omega_i}$ and $e_i \cdot
\nu^-_{\omega_i}$ is a $B_-$-invariant subspace of $V^-_i$.

Now, let $(\cF_G,A,\cF_{B_+},\cF_{B_-})$ be a Miura $(G,q)$-oper
with regular singularities determined by polynomials $\{ \Lambda_i(z)
\}_{i=1,\ldots,r}$ (see Definition \ref{MiuraRS}). Recall that
$\cF_{B_-}$ is a $B_-$-reduction of a $G$-bundle $\cF_G$ on $\P^1$
preserved by the $(G,q)$-connection $A$. Therefore for each
$i=1,\ldots,r$, the vector bundle
$$
\cV^-_i = \cF_{B_-} \underset{B_-}\times V^-_i = \cF_G \underset{G}\times
V^-_i
$$
associated to $V^-_i$ contains a rank-two
subbundle
$$
\cW_i = \cF_{B_-} \underset{B_-}\times W_i
$$
associated to $W_i \subset V^-_i$, and $\cW_i$ in
turn contains a line subbundle
$$
\cL^-_i = \cF_{B_-} \underset{B_-}\times L^-_i
$$
associated to $L^-_i \subset W_i$.

Denote by $\phi_i(A)$ the $q$-connection on the vector bundle $\cV^-_i$
(or equivalently, a $(\GL(V_i),q)$-connection) corresponding to the
above Miura $q$-oper connection $A$. Since $A$ preserves $\cF_{B_-}$
(see Definition \ref{Miura}), we see that $\phi_i(A)$ preserves the
subbundles $\cL^-_i$ and $\cW_i$ of $\cV^-_i$. Denote by $A_i$ the
corresponding $q$-connection on the rank 2 bundle $\cW_i$.

Let us trivialize $\cF_{B_-}$ on a Zariski open subset of $\P^1$ so
that $A(z)$ has the form \eqref{form of A} with respect to this
trivialization. This trivializes the
bundles $\cV^-_i$, $\cW_i$, and $\cL^-_i$, so that the $q$-connection
$A_i(z)$ becomes a $2 \times 2$ matrix whose entries are in $\C(z)$, which can be thought of as Miura $(GL(2),q)$-opers.

Using this collection $\{A_i\}_{i=1, \dots, r}$ we introduce the notion of 
$Z$-{\em twisted Miura-Pl\"ucker $(G,q)$-oper}.

\begin{Def}  \cite{Frenkel:2020}   \label{ZtwMP}
  A $Z$-{\em twisted Miura-Pl\"ucker $(G,q)$-oper} is a meromorphic
  Miura $(G,q)$-oper on $\P^1$ with underlying $q$-connection $A(z)$
  satisfying the following condition: there exists $v(z) \in B_-(z)$
  such that for all $i=1,\ldots,r$, the Miura $(\GL(2),q)$-opers
  $A_i(z)$ can be
  written in the form
\begin{equation}    \label{gaugeA3}
A_i(z) = v(zq) Z v(z)^{-1}|_{W_i} = v_i(zq)Z_iv_i(z)^{-1},
\end{equation}
where $v_i(z) = v(z)|_{W_i}$ and $Z_i = Z|_{W_i}$.
\end{Def}

Thus the resulting Cartan connection (\ref{AH}) of $Z$-{\em twisted Miura-Pl\"ucker $(G,q)$-oper} as in the $Z$-twisted case:   
\begin{equation}
\label{AH1}
A^H(z)=\prod_i
\Bigg[\zeta_i\frac{y_i(qz)}{y_i(z)}\Bigg]^{-\check{\alpha}_i} 
\end{equation}

Next we will formulate the nondegeneracy condition for $Z$-{\em twisted Miura-Pl\"ucker $(G,q)$-opers} (see \cite{Frenkel:2020} for the equivalent definitions).

\begin{Def}\cite{Frenkel:2020} 
We call $Z$-{\em twisted Miura-Pl\"ucker $(G,q)$-oper} with regular singularities {\it nondegenerate} 
if each $y_i(z)$ from formula \eqref{AH1} is a polynomial, and for all $i,j,k$ with $i\ne j$ and $a_{ik} \neq 0, a_{jk} \neq 0$, the zeros of $y_i(z)$ and $y_j(z)$ are
  $q$-distinct from each other and from the zeros of
  $\Lambda_k(z)$.
\end{Def}

We notice that $\{y_i(z)\}_{i=1,\dots, r}$ can be chosen to be monic.

\subsection{$QQ$-systems and Miura-Pl\"ucker $(G,q)$-opers}
From now on, we will assume that our element $Z = \prod_i
\zeta_i^{\check\alpha_i} \in H$ satisfies the following property:
\begin{equation}    \label{assume}
\prod_{i=1}^r \zeta_i^{a_{ij}} \notin q^\Z, \qquad
\forall j=1,\ldots,r\,.  
\end{equation}
Since $\prod_{i=1}^r \zeta_i^{a_{ij}}\ne 1$ is a special case of
\eqref{assume}, this implies that $Z$ is {\em regular semisimple}.

We introduce the following system of equations \cite{Frenkel:2020}:
\begin{multline}\label{qq}
\wt{\xi}_iQ^i_{-}(z)Q^i_{+}(qz)-\xi_iQ^i_{-}(qz)Q^i_{+}(z) = \\
\Lambda_i(z)\prod_{j> i}\Big[Q^j_{+}(qz)\Big]^{-a_{ji}}
\prod_{j< i}\Big[Q^j_{+}(z)\Big]^{-a_{ji}}, \qquad
i=1,\ldots,r,
\end{multline}
where
\begin{equation}    \label{xi}
\wt{\xi}_i=\zeta_i \prod_{j>i} \zeta_j^{a_{ji}}, \qquad
{\xi}_i=\zeta^{-1}_i\prod_{j< i} \zeta_j^{-a_{ji}}
\end{equation}
and we use the ordering of simple roots from the definition of
$(G,q)$-opers.

We call this the $QQ$-{\em system} associated to $G$ and a collection
of polynomials $\Lambda_i(z)$, $i=1,\ldots,r$.

A polynomial solution $\{ Q^i_+(z),Q^i_-(z) \}_{i=1,\ldots,r}$ of
\eqref{qq} is called {\em nondegenerate} if it has the following
properties: condition \eqref{assume} holds for the $\zeta_i$'s; for
all $i,j,k$ with $i \neq j$ and $a_{ik}, a_{jk} \neq 0$, the
zeros of $Q^j_+(z)$ and $Q^j_-(z)$ are $q$-distinct from each other
and from the zeros of $\Lambda_k(z)$; and the polynomials $Q^i_+(z)$
are monic.

We have the following Theorem, which relates solutions of the $QQ$-system to 
$Z$-twisted Miura-Pl\"ucker $(G,q)$-opers.

\begin{Thm}   \cite{Frenkel:2020}  \label{inj}
  There is a one-to-one correspondence between the set of
  nondegenerate $Z$-twisted Miura-Pl\"ucker $(G,q)$-opers and the set
  of nondegenerate polynomial solutions of the $QQ$-system \eqref{qq}.
\end{Thm}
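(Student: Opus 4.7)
The plan is to establish the bijection by restricting everything to the two-dimensional subbundles $\cW_i\subset\cV^{-}_i$ and matching the resulting $2\times 2$ oper equation against the $QQ$-system component-wise.

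For the forward direction, I would begin by writing $A(z)$ in the canonical form \eqref{form of A} guaranteed by Theorem \ref{gen elt1}, and then compute the restriction $A_i(z)$ in the basis $(\nu^{-}_{\omega_i},\, e_i\cdot\nu^{-}_{\omega_i})$ of $W_i$. Because $f_j\cdot\nu^{-}_{\omega_i}=0$ for all $j$ and $f_j\cdot(e_i\nu^{-}_{\omega_i})=\delta_{ij}\nu^{-}_{\omega_i}$, only the $i$-th exponential factor contributes off-diagonally on $W_i$, so the restriction is upper triangular with diagonal $\bigl(g_i(z),\,g_i(z)^{-1}\prod_{j\neq i}g_j(z)^{-a_{ji}}\bigr)$ and upper-right entry $\Lambda_i(z)$. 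Next I would decompose the Miura-Pl\"ucker gauge $v(z)=\prod_i y_i(z)^{-\check\alpha_i}\,n(z)$ and restrict $v$ to $W_i$ to obtain an analogous upper-triangular $v_i(z)$ whose upper-right entry $u_i(z)$ is the coefficient of $f_i$ in a Chevalley-type factorization of $n(z)$ aligned with the Coxeter ordering $c=w_{i_1}\cdots w_{i_r}$. Equating $A_i(z)=v_i(qz)Z_iv_i(z)^{-1}$, the diagonal entries recover \eqref{giyi}, and the off-diagonal entries become a bilinear functional equation in $y_i(z),y_i(qz),u_i(z),u_i(qz)$ with inhomogeneous term $\Lambda_i(z)\prod_{j\neq i}y_j(z)^{-a_{ji}}$.

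To identify this with the $QQ$-system \eqref{qq}, I would set $Q^i_+(z)=y_i(z)$, which is a monic polynomial by the nondegeneracy hypothesis, and then determine the rescaling $Q^i_-(z)$ of $u_i(z)$ by a multiplicative factor built from $\zeta_j$'s and Coxeter-ordered shifted products of the $y_j(z)$, $j\neq i$, so that the coefficients on the left-hand side become exactly $\wt\xi_i$ and $\xi_i$ from \eqref{xi} and the right-hand side becomes $\Lambda_i(z)\prod_{j>i}Q^j_+(qz)^{-a_{ji}}\prod_{j<i}Q^j_+(z)^{-a_{ji}}$. The split of $j\neq i$ into $j>i$ and $j<i$ is precisely what records the fixed Coxeter ordering. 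For the backward direction, given a nondegenerate polynomial solution $\{Q^i_+,Q^i_-\}$, I would set $y_i=Q^i_+$, define $g_i(z)=\zeta_iy_i(qz)/y_i(z)$, and use \eqref{form of A} to reconstruct $A(z)$; the gauge $v(z)\in B_-(z)$ is then built from $\prod_i y_i^{-\check\alpha_i}$ together with an $N_-$-part whose $f_i$-coefficient $u_i(z)$ is recovered from $Q^i_-(z)$ by inverting the rescaling above. The $QQ$-system equations translate precisely to $A_i(z)=v_i(qz)Z_iv_i(z)^{-1}$ on each $W_i$, which is the $Z$-twisted Miura-Pl\"ucker condition \eqref{gaugeA3}.

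I expect the main obstacle to be the bookkeeping required to match the Coxeter-asymmetric right-hand side of \eqref{qq}: a direct computation of $v_i(qz)Z_iv_i(z)^{-1}$ produces a symmetric product $\prod_{j\neq i}y_j(z)^{-a_{ji}}$ (no $j>i$ versus $j<i$ split, no shift asymmetry) and the \emph{raw} coefficients $\zeta_i$ and $\zeta_i^{-1}\prod_{j\neq i}\zeta_j^{-a_{ji}}$, not the Coxeter-split combinations $\wt\xi_i,\xi_i$ and $\prod_{j>i}(\cdot)(qz)\prod_{j<i}(\cdot)(z)$. Producing the latter requires a specific ordered parametrization of $n(z)$ dictated by the reduced expression for the Coxeter element, and absorbing the correction into $Q^i_-(z)=(\text{ordered factor})\cdot u_i(z)$. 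A secondary check is that polynomiality (and monicity) of the $Q^i_-$ follows from, and implies, the nondegeneracy of the Miura-Pl\"ucker oper; this relies essentially on the coprimality of the zeros of the $y_j$'s with those of the $\Lambda_k$'s.
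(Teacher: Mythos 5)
Your proposal is correct and follows essentially the same route as the paper (and its companion \cite{Frenkel:2020}): restrict $A(z)$ and the gauge $v(z)$ to the rank-two subbundles $\cW_i$, identify $Q^i_+=y_i$ and $Q^i_-$ with a rescaled $f_i$-coefficient of the $N_-$-part of $v$ as in \eqref{vdots}, and read off \eqref{qq} from the off-diagonal entries, with \eqref{key}--\eqref{key1} as the resulting form of $A(z)$. The only small imprecision is that the Coxeter asymmetry ($j>i$ versus $j<i$, shifted versus unshifted) originates already in the ordered product \eqref{form of A} itself --- the upper-right entry of $A_i(z)$ is $\Lambda_i(z)\prod_{j>i}g_j(z)^{-a_{ji}}$ rather than $\Lambda_i(z)$ --- not only in the parametrization of $n(z)$ and the rescaling of $Q^i_-$; and the polynomiality of $Q^i_-$ in the forward direction uses condition \eqref{assume} in addition to the $q$-distinctness of zeros.
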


The proof of this theorem relies on solving explicitly the conditions in (\ref{gaugeA3}), 
using the reduction to the $B_+$-invariant two-dimensional subspaces $W_i$, which makes the element $v(z)$ diagonalizing $A(z)$ look as follows:

\begin{equation}    \label{vdots}
v(z) = \prod_{i=1}^r \Big[Q^i_{+}(z)\Big]^{-\check\alpha_i} \prod_{i=1}^r
e^{-\frac{Q^i_{-}(z)}{Q^i_{+}(z)} f_i} \dots ,
\end{equation}
where $Q_i^{\pm}(z)$ are the solutions of the $QQ$-system and 
the dots stand for the exponentials of higher commutator terms 
in ${\mathfrak n}_-=\Lie N_-$. 
As a result, the q-connection $A(z)$ for the Miura-Pl\"ucker $(G,q)$-oper can be expressed as follows:
\begin{align}    \label{key}
A(z) &=\prod_j\left[ \zeta_j\frac{Q^j_+(qz)}{Q^j_+(z)}
\right]^{-\check{\alpha}_j} e^{\frac{\Lambda_j(z) Q^j_+(z)}{\zeta_j
    Q^j_+(qz)}f_i} \\ &= \prod_j
\Big[\zeta_jQ_{+}^j(qz)\Big]^{-\check{\alpha}_j}
e^{\frac{\Lambda_j(z)}{\zeta_j Q_+^j(qz)Q_+^j(z)}e_j}
\Big[{Q_{+}^j(z)}\Big]^{\check{\alpha}_j}\,.
    \label{key1}
\end{align}

\subsection{Bethe ansatz equations}

The $QQ$-system \eqref{qq} gives rise to a system of
equations only involving the $Q_+^i(z)$'s. Let $\{ w^k_i
\}_{k=1,\ldots,m_i}$ be the set of roots of the polynomial
$Q^i_+(w)$.  We call the system of equations
\begin{equation}    \label{bethe}
\frac{Q_+^{i}(qw^k_i)}{Q_+^{i}(q^{-1}w^k_i)} \prod_j\zeta_j^{a_{ji}} =
- \; \; \frac{\Lambda_i(w_k^i)\prod_{j>
  i}\Big[Q^j_{+}(qw_k^i)\Big]^{-a_{ji}}\prod_{j<
  i}\Big[Q^j_{+}(w_k^i)\Big]^{-a_{ji}}}{\Lambda_i(q^{-1}w_k^i)\prod_{j>
  i}\Big[Q^j_{+}(w_k^i)\Big]^{-a_{ji}}\prod_{j<
  i}\Big[Q^j_{+}(q^{-1}w_k^i)\Big]^{-a_{ji}}}
\end{equation}
for $i=1,\ldots,r$, $k=1,\ldots,m_i$ the {\em Bethe Ansatz equations}
for the group $G$ and the set $\{ \Lambda_i(z) \}_{i=1,\ldots,r}$.

For simply laced $G$, this system is equivalent to the system of Bethe
Ansatz equations that appear in the $U_q \ghat$ XXZ-type
model~\cite{OGIEVETSKY1986360,RW,Reshetikhin:1987}. In case of 
non-simply laced $G$, we obtain a different system of Bethe Ansatz
equations, which, as far as we know, has not yet been studied in the
literature on quantum
integrable systems. An additive version of this system appeared earlier in
\cite{Mukhin_2005}. As will be explained in \cite{FHRnew},
these Bethe Ansatz equations correspond to a novel quantum integrable
model in which the spaces of states are representations of the twisted
quantum affine Kac-Moody algebra $U_q {}^L\ghat$, where $^L\ghat$ is
the Langlands dual Lie algebra of $\ghat$.

The following Theorem is true (see \cite{Frenkel:2020} for details).

\begin{Thm}    \label{BAE}
  There is a bijection between two sets: the nondegenerate
  polynomial solutions of the $QQ$-system \eqref{qq} and the nondegenerate solutions of Bethe
  Ansatz equations \eqref{bethe}.
\end{Thm}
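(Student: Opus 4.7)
\textbf{Proof proposal for Theorem \ref{BAE}.}
The plan is to prove the two directions separately: the forward direction $QQ \Rightarrow \mathrm{BAE}$ by direct substitution at the roots of the $Q^i_+$'s, and the reverse direction $\mathrm{BAE} \Rightarrow QQ$ by reconstructing each $Q^i_-$ via a scalar $q$-difference equation, using condition \eqref{assume} for uniqueness.

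For the forward direction, fix $i$ and let $w^k_i$ be a root of $Q^i_+$. Setting $z = w^k_i$ in the $i$-th equation of \eqref{qq} kills the term containing the factor $Q^i_+(z)$, yielding an identity for $\wt\xi_i Q^i_-(w^k_i) Q^i_+(qw^k_i)$ in terms of the right-hand side at $z = w^k_i$. Setting $z = q^{-1}w^k_i$ kills the other term and gives the parallel identity for $-\xi_i Q^i_-(w^k_i) Q^i_+(q^{-1}w^k_i)$. The nondegeneracy hypothesis (i.e.\ $q$-distinctness of the zero sets) makes the right-hand side at $z = w^k_i$ nonzero, forcing $Q^i_-(w^k_i) \neq 0$; dividing the two identities cancels this common factor. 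A short computation using $a_{ii} = 2$ gives $\wt\xi_i/\xi_i = \prod_j \zeta_j^{a_{ji}}$, and the resulting relation is exactly \eqref{bethe}.

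For the reverse direction, given a nondegenerate $\{Q^i_+\}$ solving \eqref{bethe}, I would divide \eqref{qq} through by $Q^i_+(z) Q^i_+(qz)$ to obtain, for each $i$, a scalar $q$-difference equation for the rational function $R_i := Q^i_-/Q^i_+$:
\[
\wt\xi_i R_i(z) - \xi_i R_i(qz) = F_i(z),
\]
where $F_i$ is the explicit rational function appearing on the right. By nondegeneracy the finite poles of $F_i$ are simple and located at $w^k_i$ and at $q^{-1}w^k_i$, and matching residues at these two $q$-related points gives two equations for the single coefficient $c_k = \mathrm{Res}_{w^k_i} R_i$; their compatibility is precisely the BAE at $w^k_i$ and thus holds by assumption, so the principal part of $R_i$ is determined. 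The remaining polynomial part $P_i$ obeys an equation of the same form $\wt\xi_i P_i(z) - \xi_i P_i(qz) = G_i(z)$ with polynomial $G_i$; condition \eqref{assume} is equivalent to $\wt\xi_i/\xi_i \notin q^{\Z}$, which makes the operator $\wt\xi_i\cdot\mathrm{Id} - \xi_i\cdot M_q^*$ invertible on polynomials of each relevant degree, so $P_i$ exists and is unique. Setting $Q^i_- := R_i Q^i_+$ yields the desired polynomial.

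The two constructions are manifestly inverse, giving the bijection; the nondegeneracy of the constructed $\{Q^i_+, Q^i_-\}$ can then be read off the $QQ$-relation evaluated at the zeros of each polynomial. The main obstacle I anticipate is the bookkeeping in the reverse direction: controlling the growth of $R_i$ at infinity so that the polynomial part $P_i$ has exactly the expected degree (hence so does $Q^i_-$), and verifying every clause of the nondegeneracy definition for the resulting pair. Condition \eqref{assume} is essential precisely because it kills the polynomial kernel of the homogeneous shift operator and thereby secures the uniqueness needed for the bijection.
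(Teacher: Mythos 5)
Your proposal is correct and follows essentially the same route as the proof this paper relies on (the theorem is only cited here, with the argument given in the companion paper \cite{Frenkel:2020}): evaluation of the $QQ$-relation at $w^k_i$ and $q^{-1}w^k_i$ together with $\wt\xi_i/\xi_i=\prod_j\zeta_j^{a_{ji}}$ for the forward direction, and reconstruction of $Q^i_-=R_iQ^i_+$ by residue matching (whose compatibility is the Bethe equation) plus invertibility of $\wt\xi_i-\xi_i M_q^*$ on polynomials via \eqref{assume} for the reverse direction and for uniqueness. No gaps beyond the routine bookkeeping you already flagged.
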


\subsection{B\"acklund transformations, $Z$-twisted condition and the full $QQ$-system} 

Now we will relate Miura-Pl\"ucker to $Z$-twisted $(G,q)$-opers provided some conditions on the $QQ$-system are satisfied. 

First, we introduce a set of certain transformations on Miura-Pl\"ucker $(G,q)$-opers, which we refer to as {\it B\"acklund transformations}.

\begin{Prop}    \label{fiter}
  Consider the $q$-gauge transformation of the $q$-connection $A$
  given by formula \eqref{key}:
\begin{eqnarray}
A \mapsto A^{(i)}=e^{\mu_i(qz)e_i}A(z)e^{-\mu_i(z)e_i},
\quad \operatorname{where} \quad \mu_i(z)=\frac{\prod\limits_{j\neq
    i}\Big[Q_+^j(z)\Big]^{-a_{ji}}}{Q^i_{+}(z)Q^i_{-}(z)}\,.
\label{eq:PropDef}
\end{eqnarray}
Then $A^{(i)}(z)$ can be obtained from $A(z)$ by
substituting in formula \eqref{key} (or \eqref{key1})
\begin{align}
Q^j_+(z) &\mapsto Q^j_+(z), \qquad j \neq i, \\
Q^i_+(z) &\mapsto Q^i_-(z), \qquad Z\mapsto s_i(Z)\,.
\label{eq:Aconnswapped}
\end{align}
\end{Prop}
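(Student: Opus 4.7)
The plan is to verify the claimed substitution by direct computation, starting from the factorization of $A(z)$ given by \eqref{key1}: $A(z)=\prod_j A_j(z)$ with
$$A_j(z) = [\zeta_j Q^j_+(qz)]^{-\check{\alpha}_j}\, e^{\frac{\Lambda_j(z)}{\zeta_j Q^j_+(qz) Q^j_+(z)} f_j}\, [Q^j_+(z)]^{\check{\alpha}_j}.$$
The idea is to push $e^{\mu_i(qz) e_i}$ from the left through all $A_j$ with $j<i$ and $e^{-\mu_i(z) e_i}$ from the right through all $A_j$ with $j>i$, reducing everything to an identity in the $i$-th $\SL(2)$ subgroup that is resolved by the $QQ$-system \eqref{qq}.

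For $j \ne i$, the Chevalley relation $[e_i, f_j]=0$ implies that $e^{\mu e_i}$ commutes with $e^{\lambda f_j}$; the only nontrivial commutations happen through the Cartan factors via $\operatorname{Ad}(t^{\check{\alpha}_j})(e_i)=t^{a_{ji}} e_i$. A short calculation yields $\operatorname{Ad}(A_j^{-1})(e_i) = g_j(z)^{a_{ji}} e_i$, where $g_j(z) = \zeta_j Q^j_+(qz)/Q^j_+(z)$. Consequently
\begin{align*}
e^{\mu_i(qz)e_i}\prod_{j<i}A_j(z) &= \prod_{j<i}A_j(z)\cdot e^{\nu_i(qz) e_i},\\
\prod_{j>i}A_j(z)\cdot e^{-\mu_i(z)e_i}&= e^{-\rho_i(z)e_i}\prod_{j>i}A_j(z),
\end{align*}
with $\nu_i(qz) = \mu_i(qz)\prod_{j<i} g_j(z)^{a_{ji}}$ and $\rho_i(z) = \mu_i(z)\prod_{j>i} g_j(z)^{-a_{ji}}$. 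Hence $A^{(i)}(z)$ differs from $A(z)$ only through its $i$-th factor.

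It remains to show that $e^{\nu_i(qz) e_i} A_i(z) e^{-\rho_i(z) e_i}$ equals $\tilde{A}_i(z)$, defined as $A_i(z)$ with $Q^i_+ \mapsto Q^i_-$ and $\zeta_i \mapsto \tilde\zeta_i = \zeta_i^{-1}\prod_{j\ne i}\zeta_j^{-a_{ji}}$, which is the coefficient of $-\check{\alpha}_i$ in $s_i(Z) = \prod_j \tilde\zeta_j^{-\check{\alpha}_j}$ (with $\tilde\zeta_j = \zeta_j$ for $j \ne i$). In the $2\times 2$ matrix realization of $\SL(2)_i$, this amounts to three scalar identities for the $(1,1)$, $(2,2)$, and $(1,2)$ entries, the last of which follows from the first two by $\det = 1$.

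The key computation is the $(1,1)$-identity
$$Q^i_+(z)/[\zeta_i Q^i_+(qz)] + \nu_i(qz)\Lambda_i(z) = Q^i_-(z)/[\tilde\zeta_i Q^i_-(qz)].$$
Substituting the definition of $\mu_i$, one finds that $\nu_i(qz)\Lambda_i(z)$ equals $[Q^i_+(qz) Q^i_-(qz)]^{-1}\prod_{j<i}\zeta_j^{a_{ji}}$ times the right-hand side of \eqref{qq}; applying \eqref{qq} and \eqref{xi}, this collapses to $\zeta_i\prod_{j\ne i}\zeta_j^{a_{ji}}\cdot Q^i_-(z)/Q^i_-(qz) - \zeta_i^{-1} Q^i_+(z)/Q^i_+(qz)$, and the second term cancels $Q^i_+(z)/[\zeta_i Q^i_+(qz)]$, leaving exactly $Q^i_-(z)/[\tilde\zeta_i Q^i_-(qz)]$. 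The $(2,2)$-identity is verified analogously using $\rho_i$. The main technical obstacle is the bookkeeping of the various $\zeta_j^{\pm a_{ji}}$ factors picked up when commuting $e_i$ through Cartan elements; once that is organized, the $QQ$-system gives the required identities in one step.
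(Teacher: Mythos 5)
Your computation is correct and is essentially the same argument as the one in the literature: the paper itself only states Proposition \ref{fiter} and defers its proof to \cite{Frenkel:2020}, where the verification is likewise a direct conjugation of the factorized form \eqref{key1}, pushing $e^{\mu_i e_i}$ through the $j\neq i$ factors via $\operatorname{Ad}(t^{\check\alpha_j})e_i=t^{a_{ji}}e_i$ and resolving the remaining $\SL(2)_i$ identity with the $QQ$-relation \eqref{qq}. I checked your $(1,1)$ and $(2,2)$ identities and the bookkeeping of the $\zeta_j^{\pm a_{ji}}$ factors (including that $\tilde\zeta_i^{-1}=\zeta_i\prod_{j\neq i}\zeta_j^{a_{ji}}$ matches $s_i(Z)$), and they all work out; your only cosmetic slip is inherited from the paper, whose \eqref{key1} has a typo ($e_j$ should be $f_j$), which you silently and correctly fixed.
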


By construction, $A^{(i)}(z)$ is an $s_i(Z)$-twisted Miura-Pl\"ucker
$(G,q)$-oper, which corresponds to the polynomials $\{ \wt{Q}^j_+(z)
\}_{j=1,\ldots,r}$, where $\wt{Q}^+_j(z)=Q^+_j(z)$ for $j \neq i$ and
$\wt{Q}^+_i(z) = Q^-_i(z)$. The conditions for it to be nondegenerate
are spelled out in the following lemma (see \cite{Frenkel:2020}).

\begin{Lem} \label{nondegcond} Suppose that the roots of the
  polynomial $Q^i_-(z)$ constructed in the proof of Theorem \ref{BAE}
  are $q$-distinct from the roots of $\Lambda_k(z)$ for $a_{ik} \neq
  0$ and from the roots of $Q^j_+(z)$ for $j\neq i$ and $a_{jk} \neq
  0$.  Then, the data
\begin{align} \label{qqm}
  \{ \wt{Q}^j_+ \}_{j=1,\ldots,r} &= \{ Q^1_{+}, \dots,
  Q^{i-1}_+,Q^i_-,Q^{i+1}_+ \dots , Q^r_{+} \}; \\ \notag
  \{ \wt{\zeta}_j \}_{j=1,\ldots,r} &= \{
  \zeta_1,\dots,\zeta_{i-1},\zeta_i^{-1}\prod\limits_{j\neq
  i}\zeta_j^{-a_{ji}},\dots,\zeta_r\}
\end{align}
give rise to a nondegenerate solution of the Bethe Ansatz equations
\eqref{bethe} corresponding to $s_i(Z) \in H$.  Furthermore, there
exist polynomials $\{ \wt{Q}^j_- \}_{j=1,\ldots,r}$ that together with
$\{ \wt{Q}^j_+ \}_{j=1,\ldots,r}$ give rise to a nondegenerate
solution of the $QQ$-system \eqref{qq} corresponding to $s_i(Z)$.
\end{Lem}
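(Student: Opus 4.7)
The plan is to exploit Proposition \ref{fiter} together with the correspondences of Theorem \ref{inj} and Theorem \ref{BAE}. The Bäcklund transformation has already done the hard work of producing a new $q$-connection $A^{(i)}(z)$ that, by construction, is gauge-equivalent to $A(z)$ and satisfies the Miura-Plücker condition with the substituted data $Q^i_+ \mapsto Q^i_-$ and $Z \mapsto s_i(Z)$. So the goal reduces to checking that the transformed data is genuinely \emph{nondegenerate}, and then invoking the injective correspondence to extract the remaining $\widetilde{Q}^j_-$.

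First, I would compute the twist parameters explicitly. Writing $Z = \prod_j \zeta_j^{-\check\alpha_j}$ and using $s_i(\check\alpha_j) = \check\alpha_j - a_{ji}\check\alpha_i$ for $j \ne i$ together with $s_i(\check\alpha_i)=-\check\alpha_i$, one reads off exactly the formulas $\widetilde{\zeta}_i = \zeta_i^{-1}\prod_{j \ne i}\zeta_j^{-a_{ji}}$ and $\widetilde{\zeta}_j = \zeta_j$ for $j \ne i$ claimed in \eqref{qqm}. One also checks that the assumption \eqref{assume}, expressed as $\prod_k \zeta_k^{a_{kj}} \notin q^{\mathbb{Z}}$, passes to the transformed twist: this condition depends only on the $W_G$-orbit of $Z$ read through the root lattice pairings, so it is preserved under the simple reflection $s_i$.

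Next I would verify the polynomial conditions for nondegeneracy of the new data. Monicity of $\widetilde{Q}^j_+$ is automatic: for $j \ne i$ nothing changes, and $Q^i_-$ is monic by the original nondegeneracy hypothesis and the normalization used in the proof of Theorem \ref{BAE}. The required $q$-distinctness of the roots of $\{\widetilde{Q}^j_+\}$ from each other and from the zeros of the $\Lambda_k$ splits into two cases: the pairs $(j,j')$ with $j,j' \ne i$ inherit distinctness from the nondegeneracy of the original solution; the pairs involving the new $\widetilde{Q}^i_+ = Q^i_-$ require precisely the $q$-distinctness of the zeros of $Q^i_-$ from the zeros of $\Lambda_k$ (for $a_{ik}\neq 0$) and from the zeros of $Q^j_+$ (for $j \ne i$, $a_{jk}\neq 0$) --- which is exactly the hypothesis of the lemma.

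Having established that $A^{(i)}(z)$ is a \emph{nondegenerate} $s_i(Z)$-twisted Miura-Plücker $(G,q)$-oper with associated polynomials $\{\widetilde{Q}^j_+\}$, I would invoke Theorem \ref{inj} in the reverse direction: this nondegenerate Miura-Plücker oper corresponds to a unique nondegenerate polynomial solution of the $QQ$-system \eqref{qq} for the twist $s_i(Z)$, whose $+$-polynomials are precisely $\{\widetilde{Q}^j_+\}$ and whose $-$-polynomials $\{\widetilde{Q}^j_-\}$ are the objects whose existence the lemma asserts. Finally, Theorem \ref{BAE} converts this nondegenerate $QQ$-solution into a nondegenerate solution of the Bethe Ansatz equations \eqref{bethe} for $s_i(Z)$. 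The only real subtlety is the bookkeeping in the previous paragraph: once the $q$-distinctness conditions are matched carefully between the hypothesis and the nondegeneracy definition, every remaining step is a direct application of results already proved.
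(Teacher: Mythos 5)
Your argument is circular at its central step. You take from Proposition \ref{fiter} that $A^{(i)}(z)$ ``by construction satisfies the Miura-Pl\"ucker condition with the substituted data,'' and then apply Theorem \ref{inj} to extract the $\wt{Q}^j_-$. But Proposition \ref{fiter} only tells you that $A^{(i)}$ has the \emph{form} \eqref{key} with $Q^i_+\mapsto Q^i_-$ and $Z\mapsto s_i(Z)$; being an $s_i(Z)$-twisted Miura-Pl\"ucker oper additionally requires the existence of $v(z)\in B_-(z)$ satisfying \eqref{gaugeA3}, and by the very mechanism of Theorem \ref{inj} (see \eqref{vdots}) that existence is \emph{equivalent} to having a full nondegenerate solution $\{\wt{Q}^j_+,\wt{Q}^j_-\}$ of the $QQ$-system for $s_i(Z)$ --- which is exactly the conclusion of the lemma. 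The non-circular route must first verify \emph{directly} that the data \eqref{qqm} satisfies the Bethe Ansatz equations \eqref{bethe} for $s_i(Z)$, and only then invoke the bijection of Theorem \ref{BAE} to manufacture the $\wt{Q}^j_-$. The substantive computation you omit is the check at the nodes $j\neq i$ with $a_{ij}\neq 0$: the $j$-th Bethe equation for the new data involves $\wt{Q}^i_+=Q^i_-$ at the roots $w$ of $Q^j_+$ and their $q$-shifts, and one shows it holds by evaluating the $i$-th $QQ$-equation \eqref{qq} at such $w$ (where the right-hand side vanishes), obtaining $Q^i_-(qw)/Q^i_-(w)=(\wt\xi_i/\xi_i)\,Q^i_+(qw)/Q^i_+(w)$. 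This is precisely where the hypothesis that the roots of $Q^i_-$ are $q$-distinct from those of $Q^j_+$ and of $\Lambda_k$ is consumed; in your write-up these hypotheses only appear in the bookkeeping of the nondegeneracy definition, not in any identity.

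A secondary gap: your claim that \eqref{assume} ``is preserved under the simple reflection $s_i$'' because it depends only on the $W_G$-orbit of $Z$ is not justified. Condition \eqref{assume} constrains $\alpha_j(Z)$ only for \emph{simple} roots $\alpha_j$, and the transformed condition $\prod_k\wt\zeta_k^{\,a_{kj}}\notin q^{\Z}$ amounts to $(s_i\alpha_j)(Z)\notin q^{\Z}$ with $s_i\alpha_j=\alpha_j-a_{ij}\alpha_i$ a non-simple root whenever $j\neq i$ and $a_{ij}\neq 0$; this does not follow from \eqref{assume} as stated and must either be assumed (e.g.\ by imposing $\alpha(Z)\notin q^{\Z}$ for all roots) or argued separately. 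Your computation of the $\wt\zeta_j$ from $s_i(\check\alpha_j)=\check\alpha_j-a_{ji}\check\alpha_i$ is correct, as is the case analysis for monicity and $q$-distinctness of the $\wt{Q}^j_+$, but those are the easy parts of the lemma.
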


Thus, if the conditions of Lemma \ref{nondegcond} are
satisfied, we can associate to every nondegenerate $Z$-twisted
Miura-Pl\"ucker $(G,q)$-oper a nondegenerate $s_i(Z)$-twisted
Miura-Pl\"ucker oper via $A(z) \mapsto A^{(i)}(z)$. We call this
procedure a {\em B\"acklund-type transformation} associated to the
$i$th simple reflection of the Weyl group $W_G$. We now
generalize this transformation to other Weyl group elements.
We denote the elements of the $QQ$-system corresponding to the reflection $w$ as
\begin{eqnarray}
\{Q^{i,w}_+(z)\}_{i=1,\dots, r}
\end{eqnarray}

\begin{Def}
We call the system of equations generated by $\{Q^{i,w}_+(z)\}_{i=1,\dots, r}$ for all $w\in W_G$ the {\it full $QQ$-system}.  A solution of the $QQ$-system
  \eqref{qq} is called {\it $W_G$-generic} if 
by consecutively applying the procedure described in Lemma \ref{nondegcond} for $w=  s_{i_1} \dots s_{i_k}$, we obtain a system of nondegenerate solutions of the $QQ$-systems corresponding to the  elements $w_j(Z) \in H$, where $w_k=s_{i_{k-j+1}} \ldots s_{i_k}$
  with $j=1,\ldots,k$. A $Z$-twisted Miura-Pl\"ucker $(G,q)$-oper is called $W_G$-generic if it corresponds to a $W_G$-generic solution of the $QQ$-system via the bijection in Theorem \ref{inj}.
\end{Def}

One can show that for $W_G$-{\em generic} Miura-Pl\"ucker $(G,q)$-oper there exists $b_-(z)\in B_-(z)$  such that:
\begin{equation}    \label{Abplus}
A(z) = b_-(qz) Z b_-(z)^{-1}.
\end{equation}
Then the following Theorem holds (see \cite{Frenkel:2020}).
\begin{Thm}    \label{w0}
Every $W_G$-generic $Z$-twisted Miura-Pl\"ucker $(G,q)$-oper is a
nondegenerate $Z$-twisted Miura $(G,q)$-oper.
\end{Thm}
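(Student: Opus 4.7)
The plan is to construct explicitly an element $b_-(z)\in B_-(z)$ satisfying \eqref{Abplus} by bootstrapping from the partial diagonalization formula \eqref{vdots}, using the cascade of B\"acklund transformations made available by the $W_G$-genericity hypothesis.

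I would fix a reduced expression $w_0 = w_{i_1}\cdots w_{i_N}$, where $N = \ell(w_0)$ equals the number of positive roots of $\fg$. The $W_G$-genericity assumption, combined with Lemma \ref{nondegcond} and Proposition \ref{fiter}, inductively produces for every prefix $w_k = w_{i_1}\cdots w_{i_k}$ a nondegenerate $w_k(Z)$-twisted Miura-Pl\"ucker $(G,q)$-oper $A^{(w_k)}(z)$. Each such $A^{(w_k)}(z)$ is $q$-gauge equivalent to $A(z)$ via an element of $N_+(z)$ obtained as an ordered product of the factors $e^{\pm \mu_{i_j}(\cdot)e_{i_j}}$ appearing in \eqref{eq:PropDef}, and admits an explicit description of the form \eqref{key} in terms of an updated $Q$-system $\{Q^{j,w_k}_+(z)\}_{j=1,\ldots,r}$ with twist $w_k(Z)$.

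Next, Theorem \ref{inj} furnishes, for each $A^{(w_k)}(z)$, a partial diagonalization $v^{(w_k)}(z)$ of the schematic form \eqref{vdots}, whose Cartan part and simple negative-root coefficients are pinned down by $Q^{j,w_k}_\pm$. Pulling these back along the cumulative B\"acklund gauge transformation produces constraints on the undetermined $N_-(z)$-valued higher commutator terms of the original $v(z)$. The constraint contributed when passing from $w_k$ to $w_{k+1} = w_k w_{i_{k+1}}$ pins down the component of $v(z)$ along the positive root $\beta_{k+1} = w_{i_1}\cdots w_{i_k}(\alpha_{i_{k+1}})$, since this is the direction in which the new simple reflection acts nontrivially at lowest-weight level. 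As $k$ ranges from $0$ to $N-1$, the $\beta_k$'s exhaust all positive roots of $\fg$; hence after all $N$ steps every $N_-$-component of $v(z)$ is determined, producing a unique $b_-(z)\in B_-(z)$.

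Finally, I would verify that this $b_-(z)$ actually satisfies \eqref{Abplus} on the entire $G$-bundle by checking equality on each associated vector bundle $\cV^-_i$. By construction the identity $A(z) = b_-(qz)Zb_-(z)^{-1}$ holds on every two-dimensional subbundle $\cW_i$; the successive Miura-Pl\"ucker conditions at the translates $w_k(Z)$ extend this agreement to all weight spaces obtained from the lowest weight by iterated lowering operators along the chosen reduced word, and these span $V^-_i$. Since the fundamental representations generate the category of representations of $G$, the equality holds in $G(z)$, making the original Miura-Pl\"ucker oper into a $Z$-twisted Miura oper; nondegeneracy is inherited directly from that of the underlying $QQ$-data. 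The main obstacle is ensuring that the inductive determination of $b_-(z)$ is internally consistent: the constraints coming from distinct prefixes $w_k$ must be mutually compatible, which is precisely what $W_G$-genericity guarantees by ruling out coincidences among the zeros of the Weyl-translated $Q$-polynomials. A residual ambiguity from the centralizer of $Z$ in $B_-$, which equals $H$ by regularity of $Z$, is fixed by the monic normalization of the $Q^i_+$'s.
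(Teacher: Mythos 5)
Your overall strategy --- cascading the B\"acklund transformations of Proposition \ref{fiter} along a reduced word for $w_0$ so that the roots $\beta_k=w_{i_1}\cdots w_{i_{k-1}}(\alpha_{i_k})$ exhaust the positive roots --- is indeed the route the paper intends (the proof itself is deferred to \cite{Frenkel:2020}, and the entire preceding subsection is set up precisely for this argument). However, your final verification step contains a genuine error. You claim that the identity $A(z)=b_-(qz)Zb_-(z)^{-1}$, once checked on the two-dimensional subbundles $\cW_i$ and their Weyl translates, extends to all of $\cV^-_i$ because the relevant weight vectors ``span $V^-_i$.'' They do not: the Miura-Pl\"ucker conditions at the translates $w_k(Z)$ only control the action on two-dimensional subspaces attached to \emph{extremal} weight vectors $\tilde{w}_k\cdot\nu^-_{\omega_i}$ and their single $e$-neighbors, and for non-minuscule fundamental representations (already for $V_{\omega_2}$ of $G_2$, whose zero-weight space is two-dimensional) these subspaces fail to span the representation. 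The correct way to close the argument is at the level of the group, not of individual representations: one shows that the discrepancy $n(z)=Z^{-1}b_-(qz)^{-1}A(z)b_-(z)$, which a priori lies in $[N_-(z),N_-(z)]$ (the subgroup acting trivially on every $W_i$), is forced to be the identity element, after which $A(z)=b_-(qz)Zb_-(z)^{-1}$ holds in $G(z)$ and hence in every representation automatically.

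The second, related weakness is that the heart of the induction --- the claim that the constraint from the step $w_k\to w_{k+1}$ ``pins down the component of $v(z)$ along $\beta_{k+1}$,'' and that the constraints from distinct prefixes are mutually compatible --- is asserted rather than proved. This is exactly the nontrivial content of the theorem: one must show that knowing the Cartan and simple-root data of \emph{every} Weyl-translated diagonalization $v_w(z)$ (equivalently, all generalized minors $\Delta_{w\cdot\omega_i,\omega_i}(v^{-1}(z))=Q^{w,i}_+(z)$) determines $v(z)$ modulo the allowed ambiguity and kills the discrepancy $n(z)$ root by root. In the present paper this determination is what Theorem 1.12 of \cite{FZ} supplies (and it is invoked explicitly in the proof of the subsequent, stronger theorem); your proposal needs either that input or a careful root-space bookkeeping replacing it. As written, the proposal reproduces the intended skeleton but leaves the two load-bearing steps --- consistency of the inductive determination and passage from the $\cW_i$'s to the group identity --- unestablished, and the spanning argument offered for the latter is false in general.
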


\begin{Rem}
One should note that for a given $Z$-twisted $(G,q)$-oper, the corresponding $\{Q^{i,w}_+(z)\}_{i=1,\dots, r} $-systems are in 1-to-1 correspondence with the corresponding $Z$-twisted $Z$-twisted Miura $(G,q)$-opers for regular semisimple $G$ (see Proposition \ref{Z prime}). That immediately leads to the fact that the transformations (\ref{qqm}) preserve Weyl group relations.
\end{Rem}

In the case of $G=SL(r+1)$, there is an alternative construction of $Z$-twisted Miura opers using the determinant formulae: see e.g. \cite{KSZ} and more complete description in \cite{Koroteev:2020tv}. Using this approach one can show that nondegenerate $Z$-twisted Miura-Pl\"ucker $(SL(r+1),q)$-oper is $Z$-twisted Miura $(SL(r+1),q)$-oper.

\vskip.1in

In \cite{Brinson:2021ww}, by following an analogous prescription, the differential version of $G$-opers was considered. It was shown that non-degenerate $Z$-twisted Miura-Pl\"ucker $G$-opers are in fact $Z$-twisted Miura $G$-opers as long as the degrees of polynomials $\{\Lambda_{i}(z), Q^{i,w}_+(z)\}^r_{i=1, w\in W}$ the full $QQ$-system allow for polynomial solutions of the $QQ$-system. 
The argument follows the expansion of solutions of the $QQ$-system around $\widetilde\xi_i= 0$ in \eqref{xi}. In this regime, the $QQ$-system reduces to a system of elementary polynomial relations, which is non-degenerate for distinct roots of $\Lambda_i$. Then one analytically continues with respect to $\widetilde\xi_i$. The argument suits both the differential and $q$-difference cases equally well.

\vskip.1in

Therefore, we formulate the following statement.
\begin{Thm}
Any nondegenerate $Z$-twisted Miura-Pl\"ucker $(G,q)$-oper is $Z$-twisted Miura $(G,q)$-oper if the degrees of the full $QQ$-system $\{\Lambda_{i}(z), Q^{i,w}_+(z)\}^r_{i=1, w\in W}$ allow the polynomial solution.  
\end{Thm}

\section{$Z$-twisted $q$-opers and $(G,q)$-Wronskians}\label{Sec:GeneralizedWronskians}

\subsection{Generalized Minors and Pl\"ucker coordinates}
In the next section, we will discuss another approach to Miura $(G,q)$-opers. This approach is based on the datum of the corresponding connection in the set of fundamental representations. There is a way to encode this datum in terms of certain explicit ``coordinates" one can associate to a group element. 
These coordinates are the generalizations of minors for $SL(N)$. They were used by Berenstein, Fomin and Zelevinsky in the study of Schubert cells and double Bruhat cells in the combinatorial context of cluster algebras. In the seminal paper, \cite{FZ} the generalized minors appeared as a set of parameters the sign of which determines the {\it total positivity} of elements from double Bruhat cells.

Let us define what generalized minors are. Consider the big cell in Bruhat decomposition: $G_0=N_-HN_+$, where we remind that $G$ is a simple simply-connected Lie group.
For a given element $g\in G_0$ we can write it as 
\begin{eqnarray}
g=n_-~h~n_+.
\end{eqnarray}
Let $V^+_i$ be the irreducible representation of $G$ with highest weight $\omega_i$ and highest weight vector $\nu^+_{\omega_i}$ which is the eigenvector for any $h\in H$, i.e. $h\nu^+_{\omega_i}=[h]^{\omega_i}\nu^+_{\omega_i}$, $[h]^{\omega_i}\in \mathbb{C}^{\times}$.
Note that $V^+_i$ is isomorphic to one space from the family $\{V^-_i\}^r_{i=1}$. 
Let us introduce the following definition:
\begin{Def}\cite{FZ}
The following regular functions $\{\Delta^{\omega_i}\}_{i=1, \dots, r}$ on $G$, whose values on a dense set $G_0$ are given  
\begin{eqnarray}
\Delta^{\omega_i}(g)=[h]^{\omega_i}, \quad i=1, \dots, r
\end{eqnarray}
will be referred to as  {\it principal minors} of a group element $g$. 
\end{Def}

In the case of $G=SL(N)$ these functions stand for principal minors of the standard matrix realization of $SL(N)$.

Other generalized minors are obtained by the action of the Weyl group elements on the left and the right of $g$ and then applying the appropriate lifts of Weyl group elements $u,v$ on the right and the left and then applying principal minors to the result.

Namely, we have the following
\begin{Def}
For $u, v\in W_G$, we define a regular function $\Delta_{u\omega_i, v\omega_i}$ on $G$ by setting
\begin{equation}
\Delta_{u\omega_i, v\omega_i}(g)=\Delta^{\omega_i}(\tilde{u}^{-1}g\tilde{v}).
\end{equation}
\end{Def}
 Notice that in this notation $\Delta_{\omega_i, \omega_i}(g)=\Delta^{\omega_i}(g)$. Consider the orbit $\mathcal{O}_{W_G}=W_G\cdot \mathbb{C}\nu^+_{\omega_i}$, This way we have the following Proposition.\\

\begin{Prop}
The action of the group element $g$ on the highest weight vector $\nu^+_{\omega_i}\in V^+_i$ is given by: 
\begin{eqnarray}
g\cdot \nu^+_{\omega_i}=\sum_{w\in W}\Delta_{w\cdot \omega_i, \omega_i}(g)\tilde{w}\cdot\nu^+_{\omega_i}+\dots,
\end{eqnarray}
where dots stand for the vectors, which do not belong to the orbit $\mathcal{O}_W$.
\end{Prop}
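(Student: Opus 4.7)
The plan is to expand $g\cdot\nu^+_{\omega_i}$ in the weight decomposition of $V^+_i$ and, for each $w\in W_G$, identify the coefficient of the extremal vector $\tilde w\cdot\nu^+_{\omega_i}$ with $\Delta_{w\omega_i,\omega_i}(g)$. Because each extremal weight space $V^+_{i,w\omega_i}$ is one-dimensional and spanned by $\tilde w\cdot\nu^+_{\omega_i}$, while the remaining weight spaces $V^+_{i,\mu}$ with $\mu\notin W_G\cdot\omega_i$ are absorbed into the ``dots'' of the statement, the claim reduces to computing these extremal coefficients.

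First I would reduce to the case $w=1$ by acting on the left by $\tilde w^{-1}$: since $\tilde w^{-1}$ permutes weight spaces and sends $\tilde w\cdot\nu^+_{\omega_i}$ back to $\nu^+_{\omega_i}$, the coefficient of $\tilde w\cdot\nu^+_{\omega_i}$ in $g\cdot\nu^+_{\omega_i}$ equals the coefficient of $\nu^+_{\omega_i}$ in $(\tilde w^{-1}g)\cdot\nu^+_{\omega_i}$. I would then evaluate the latter via the Gauss decomposition: for $h=n_-h_0n_+\in G_0$ one has $n_+\cdot\nu^+_{\omega_i}=\nu^+_{\omega_i}$ and $n_-\cdot\nu^+_{\omega_i}=\nu^+_{\omega_i}+(\text{strictly lower-weight terms})$, so
\begin{equation}
h\cdot\nu^+_{\omega_i}=[h_0]^{\omega_i}\,n_-\cdot\nu^+_{\omega_i}=\Delta^{\omega_i}(h)\bigl(\nu^+_{\omega_i}+\text{lower-weight terms}\bigr).
\end{equation}
Applied to $h=\tilde w^{-1}g$ on the Zariski open dense subset where $\tilde w^{-1}g\in G_0$, this gives the extremal coefficient $\Delta^{\omega_i}(\tilde w^{-1}g)=\Delta_{w\omega_i,\omega_i}(g)$ by the very definition of the generalized minor, and by regularity of both sides the identity extends to all $g\in G$.

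The main obstacle is the bookkeeping coming from the stabilizer $W_{\omega_i}\subset W_G$ of the fundamental weight: two Weyl elements with the same value of $w\omega_i$ produce extremal vectors $\tilde w_1\cdot\nu^+_{\omega_i}$ and $\tilde w_2\cdot\nu^+_{\omega_i}$ that are only proportional, so the literal sum $\sum_{w\in W_G}$ must be interpreted as a sum over coset representatives of $W_G/W_{\omega_i}$. Making this rigorous requires invoking the Fomin--Zelevinsky fact that $\Delta_{u\omega_i,v\omega_i}$ depends only on $u\omega_i$ and $v\omega_i$, together with a coherent choice of the lifts $\tilde w\in N(H)$ compatible with the convention used in the definition of the generalized minors.
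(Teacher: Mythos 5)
Your argument is correct and is exactly the computation the paper leaves implicit: the Proposition is stated there without proof as an immediate consequence of the definition $\Delta_{w\omega_i,\omega_i}(g)=\Delta^{\omega_i}(\tilde w^{-1}g)$ together with the Gauss-decomposition evaluation $h\cdot\nu^+_{\omega_i}=\Delta^{\omega_i}(h)\,\nu^+_{\omega_i}+(\text{lower weight})$ on $G_0$, extended by regularity. Your closing remark about the stabilizer $W_{\omega_i}$ and the need to read the sum over $W_G/W_{\omega_i}$ with a coherent choice of lifts is a genuine (and correct) sharpening of the statement as printed, consistent with the paper's convention that $\mathcal{O}_{W_G}$ is the orbit of the line $\mathbb{C}\nu^+_{\omega_i}$ rather than of the vector itself.
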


The set of generalized minors $\{\Delta_{w\cdot \omega_i, \omega_i}\}_{w\in W; i=1, \dots, r}$ creates a set of coordinates on $G/B^+$, known as {\it generalized Pl\"ucker coordinates}. In particular, the set of zeroes of each of $\Delta_{w\cdot \omega_i, \omega_i}$ is a uniquely and unambiguously defined hypersurface in $G/B$. This feature is important for characterizing Schubert cells as quasi-projective subvarieties of a generalized flag variety, see \cite{Fomin:wy} for details. We will need the following Corollary.
\begin{Cor}
If the collection $\{\Delta_{w\cdot \omega_i, \omega_i}(g)\}_{w\in W; i=1, \dots, r}$ does not have vanishing elements, then $g\in B_+w_0B_+$. 
\end{Cor}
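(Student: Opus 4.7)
The plan is to reduce the claim to the standard Gauss-decomposition criterion. By the very definition of generalized minors,
\[
\Delta_{w\cdot\omega_i,\omega_i}(g)\,=\,\Delta^{\omega_i}(\tilde{w}^{-1}g),
\]
so the non-vanishing hypothesis, applied in particular to $w=w_0$, asserts that $\Delta^{\omega_i}(\tilde{w}_0^{-1}g)\neq 0$ for every $i=1,\ldots,r$.

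Next, I would invoke the classical fact that the principal minors $\{\Delta^{\omega_i}\}_{i=1,\ldots,r}$ extend from $G_0$ to regular functions on all of $G$, and that their common non-vanishing locus coincides precisely with the big Gauss cell $G_0=N_-HN_+$. Applied to $\tilde{w}_0^{-1}g$ this gives $\tilde{w}_0^{-1}g\in N_-HN_+$, equivalently $g\in\tilde{w}_0\,N_-HN_+$. Using $\tilde{w}_0\in N(H)$ (so that $\tilde{w}_0 H=H\tilde{w}_0$) together with the identity $\tilde{w}_0 N_-\tilde{w}_0^{-1}=N_+$ (since $w_0$ sends every positive root to a negative one), I can rewrite
\[
g\,\in\,\tilde{w}_0 N_- H N_+ \,=\, N_+\tilde{w}_0 H N_+ \,=\, N_+ H\,\tilde{w}_0 N_+ \,\subset\, B_+\tilde{w}_0 B_+ \,=\, B_+ w_0 B_+,
\]
which is the desired conclusion (and does not depend on the particular lift $\tilde{w}_0$, since any two choices differ by an element of $H\subset B_+$).

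Note that the hypothesis is stronger than what is actually used: only the $w=w_0$ component of the non-vanishing enters the argument. The one nontrivial ingredient is the Gauss-cell criterion characterizing $G_0$ as the common non-vanishing locus of the $r$ principal minors; this is the place where the main (though still very mild) obstacle lies. I would either cite it directly from the Berenstein--Fomin--Zelevinsky framework \cite{FZ}, or verify it by writing $\Delta^{\omega_i}(g)=\langle\nu^+_{\omega_i},\,g\cdot\nu^+_{\omega_i}\rangle$ with respect to a $G$-invariant contravariant form normalized on the highest-weight vector, reading off the $H$-factor of a candidate Gauss decomposition from the $r$ nonzero scalars $\Delta^{\omega_i}(g)$, and then peeling off the unipotent factors by the standard triangularity argument. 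I do not anticipate any further difficulty.
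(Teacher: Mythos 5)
Your argument is correct. The identity $\Delta_{w_0\cdot\omega_i,\omega_i}(g)=\Delta^{\omega_i}(\tilde{w}_0^{-1}g)$ is exactly the definition used in the paper, the Gauss-cell criterion you invoke (non-vanishing of all $r$ principal minors characterizes $G_0=N_-HN_+$) is recorded in the paper itself as a proposition citing Corollary 2.5 of \cite{FZ}, and the translation $\tilde{w}_0 N_-HN_+=N_+H\tilde{w}_0N_+\subset B_+w_0B_+$ via $\tilde{w}_0N_-\tilde{w}_0^{-1}=N_+$ and $\tilde{w}_0\in N(H)$ is sound and independent of the lift. Your route differs from the paper's: the authors deduce the corollary from the theory of generalized Pl\"ucker coordinates and the recognition of Schubert cells as loci cut out by vanishing/non-vanishing of the minors $\Delta_{w\cdot\omega_i,\omega_i}$, citing Proposition 3.3 of \cite{Fomin:wy} (this is the route they make explicit in the later, $z$-dependent version of this corollary). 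Your reduction to the Gaussian-decomposition criterion applied to $\tilde{w}_0^{-1}g$ is more elementary and essentially self-contained given facts already stated in the paper, and it has the added value of isolating exactly which hypotheses are used: only the $w=w_0$ family of minors is needed for the conclusion $g\in B_+w_0B_+$, while the full non-vanishing hypothesis is what the paper carries along for its subsequent nondegeneracy arguments (e.g.\ the $w=1$ minors give the Gaussian decomposition of $g$ itself). The paper's citation-based approach, on the other hand, situates the statement inside the general Schubert-cell recognition machinery, which is what the authors lean on elsewhere.
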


One of the first consequences of the formalism of generalized minors is the following Theorem.
\begin{Thm}
For a nondegenerate $Z$-twisted Miura-Pl\"ucker $(G,q)$-oper with $q$-connection $A(z)=v(qz)Zv(z)^{-1}$,  where $v(z)\in B_-(z)$ we have the following relation:
\begin{equation}
\Delta_{w\cdot \omega_i, \omega_i}(v^{-1}(z))=Q_+^{w,i}(z)
\end{equation}
for any $w\in W$.
\end{Thm}
\begin{proof}
Notice that 
$\Delta^{\omega_i}(v^{-1}(z))=Q_+^{i}(z)$. 
Indeed, following (\ref{vdots}), we have: 
$$v^{-1}(z) = \prod_{i=1}^r
e^{\frac{Q^i_{-}(z)}{Q^i_{+}(z)} f_i} \prod_{i=1}^r \Big[Q^i_{+}(z)\Big]^{\check\alpha_i} 
 \dots\,,$$
where dots stand for exponentials of higher commutators of $\{f_i\}$, we obtain that 
\begin{eqnarray}\label{odeim}
v^{-1}(z)\nu^+_{\omega_i}=Q_+^i(z)\nu^+_{\omega_i} +Q_-^i(z)f_i\nu^+_{\omega_i}+\dots\,, 
\end{eqnarray}
where dots stand for the vectors of lower weights.

Now take into account that $v(z)\tilde{w}^{-1}=u_+(z)v_w(z)$, where 
$u_+(z)\in N_+(z), ~v_{w}(z)\in B_-(z)$. 
Here $v_w(z)$ is the trivializing element for 
$A^w(z)=v_w(z){w}(Z)v_{w}^{-1}(z)$. 
This means that $\Delta^{\omega_i}(v_w^{-1}(z))=Q_+^{w,i}(z)$, which is obtained by B\"acklund transformations. 
Therefore, generalized minors satisfy the relation  
$\Delta_{w\cdot \omega_i, \omega_i}(v^{-1}(z))=Q_+^{w,i}(z)$.
\end{proof}

Following Theorem 1.12 of \cite{FZ} we obtain the following Corollary:

\begin{Cor}
The minors $\Delta_{w\cdot \omega_i, \omega_i}$ uniquely determine the element $v^{-1}(z)$.
\end{Cor}

We started this section with the explicit definition of the principal minors by means of Gaussian decomposition. The following proposition (see Corollary 2.5 in \cite{FZ}) provides a necessary and sufficient condition of its existence for a given group element.

\begin{Prop}
An element $g\in G$ admits the Gaussian decomposition if and only if $\Delta^{\omega_i}(g)\neq 0$ for any $i=1, \dots, r$.
\end{Prop}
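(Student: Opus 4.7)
The plan is to prove both directions separately. The forward implication is a direct highest-weight computation; the converse reduces to Bruhat geometry in $G$.

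For the forward direction, suppose $g=n_-hn_+\in G_0$. Acting on the highest weight vector $\nu^+_{\omega_i}\in V^+_i$ and using $n_+\nu^+_{\omega_i}=\nu^+_{\omega_i}$ together with $h\nu^+_{\omega_i}=[h]^{\omega_i}\nu^+_{\omega_i}$, I obtain $g\nu^+_{\omega_i}=[h]^{\omega_i}\,n_-\nu^+_{\omega_i}$. Since $n_-\in N_-$ is unipotent, $n_-\nu^+_{\omega_i}=\nu^+_{\omega_i}+(\text{strictly lower weight terms})$, so the $\nu^+_{\omega_i}$-component of $g\nu^+_{\omega_i}$, which by definition is $\Delta^{\omega_i}(g)$, equals $[h]^{\omega_i}\in\mathbb{C}^\times$, and is in particular nonzero.

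For the converse I argue by contrapositive. I first identify $G_0=N_-HN_+=B_-B_+$ with the big cell of the Bruhat decomposition $G=\bigsqcup_{w\in W_G}B_-wB_+$. Since every non-identity $w\in W_G$ lies above some simple reflection $w_i$ in the Bruhat order, the complement admits the description $G\setminus G_0=\bigcup_{i=1}^r\overline{B_-w_iB_+}$. Hence if $g\notin G_0$, then $g\in\overline{B_-w_iB_+}$ for some $i$, and it suffices to show that $\Delta^{\omega_i}$ vanishes on $B_-w_iB_+$ (and then on its closure by continuity of the regular function $\Delta^{\omega_i}$). Repeating the forward computation separately for left multiplication by $b_-\in B_-$ and right multiplication by $b_+\in B_+$ yields the equivariance $\Delta^{\omega_i}(b_-gb_+)=[b_-]^{\omega_i}\,[b_+]^{\omega_i}\,\Delta^{\omega_i}(g)$, reducing the claim to $\Delta^{\omega_i}(s_i)=0$ for the chosen lift $s_i$ of $w_i$. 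This is immediate: $s_i\nu^+_{\omega_i}$ is a weight vector of weight $w_i(\omega_i)=\omega_i-\alpha_i\ne\omega_i$, so it contributes nothing to the $\nu^+_{\omega_i}$-coefficient.

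The main obstacle is the geometric identification $G\setminus G_0=\bigcup_i\overline{B_-w_iB_+}$, which combines the Bruhat decomposition with the combinatorial fact that every non-identity element of $W_G$ dominates some simple reflection in the Bruhat order. Once this is in hand, everything else is a direct representation-theoretic calculation. An elementary alternative would be to construct the Gaussian factors $n_-,h,n_+$ inductively from $g$, using the nonvanishing of each $\Delta^{\omega_i}(g)$ to ensure the successive steps succeed, but the Bruhat approach above is the cleanest.
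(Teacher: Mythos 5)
Your proof is correct. Note that the paper itself gives no argument for this statement -- it simply cites Corollary 2.5 of \cite{FZ} -- so there is nothing to compare against except the standard proof, which is essentially what you have reproduced. Two small points are worth making explicit. First, your identification of $\Delta^{\omega_i}(g)$ with the coefficient of $\nu^+_{\omega_i}$ in $g\cdot\nu^+_{\omega_i}$ is not literally the paper's definition (which is via the Gaussian factors on $G_0$); it is justified because both are regular functions on $G$ agreeing on the dense open set $G_0$, which is exactly your forward computation. Second, your covering claim $G\setminus G_0=\bigcup_i\overline{B_-w_iB_+}$ silently uses the closure relation for the \emph{opposite} Bruhat decomposition, $\overline{B_-wB_+}=\bigsqcup_{v\geq w}B_-vB_+$, whose order is reversed relative to the familiar $\overline{B_+wB_+}=\bigsqcup_{v\leq w}B_+vB_+$; this is true (conjugate by a lift of $w_0$ and use that $u\mapsto w_0u$ reverses the Bruhat order) but should be stated, since with the wrong order the union of closures would not cover the complement of the big cell. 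You can bypass closures and this combinatorial input entirely: if $g=b_-\tilde{w}b_+\in B_-wB_+$ with $w\neq e$, choose $i$ with $w\omega_i\neq\omega_i$ (possible because only the identity fixes every fundamental weight), and then $\Delta^{\omega_i}(g)=[b_-]^{\omega_i}[b_+]^{\omega_i}\Delta^{\omega_i}(\tilde{w})=0$ directly, since $\tilde{w}\nu^+_{\omega_i}$ is a weight vector of weight $w\omega_i\neq\omega_i$. Your equivariance computation and the evaluation $\Delta^{\omega_i}(s_i)=0$ are both fine.
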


Finally, we end this section with the fundamental relation (\cite{FZ}, Theorem 1.17) between generalized minors, which we will relate to the $QQ$-systems. 

\begin{Prop}
Let, $u,v\in W$, such that for 
$i\in \{1, \dots, r\}$,  $\ell(uw_i)=\ell(u)+1$,  $\ell(vw_i)=\ell(v)+1$. Then 
\begin{equation}\label{eq:minorsgen}
\Delta_{u\cdot\omega_i, v\cdot\omega_i}\Delta_{uw_i\cdot \omega_i, vw_i\cdot\omega_i}-
\Delta_{uw_i\cdot\omega_i, v\cdot\omega_i}\Delta_{u\cdot\omega_i, vw_i\cdot\omega_i}=\prod_{j\neq i}\Delta_{u\cdot\omega_j, v\cdot\omega_j}^{-a_{ji}}, 
\end{equation}
\end{Prop}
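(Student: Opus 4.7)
The plan is to reduce the identity to the fundamental case $u=v=1$, and then verify it by interpreting the left-hand side as the determinant of a $2\times 2$ block of the action on the fundamental representation $V^+_{\omega_i}$, using $H$-bi-equivariance to restrict the computation to the unipotent cell.

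First I would substitute $g\mapsto\tilde{u}^{-1}g\tilde{v}$ in the defining formula $\Delta_{u\omega_i,v\omega_i}(g)=\Delta^{\omega_i}(\tilde{u}^{-1}g\tilde{v})$. The length conditions $\ell(uw_i)=\ell(u)+1$ and $\ell(vw_i)=\ell(v)+1$ ensure the compatibility $\widetilde{uw_i}=\tilde{u}\tilde{s}_i$ and $\widetilde{vw_i}=\tilde{v}\tilde{s}_i$ of the chosen Weyl-group lifts. Since $w_i\omega_j=\omega_j$ for $j\neq i$ (as $\langle\omega_j,\check{\alpha}_i\rangle=0$), the right-hand side is unchanged under the substitution, and the identity reduces to
\begin{equation*}
\Delta^{\omega_i}(g)\,\Delta^{\omega_i}(\tilde{s}_i^{-1}g\tilde{s}_i)-\Delta^{\omega_i}(\tilde{s}_i^{-1}g)\,\Delta^{\omega_i}(g\tilde{s}_i)=\prod_{j\neq i}\Delta^{\omega_j}(g)^{-a_{ji}}.
\end{equation*}

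Next I would observe that the four minors on the left-hand side are, up to a scalar determined by the lift, the entries of the $2\times 2$ matrix $M_i(g)$ representing the projection onto $W_i^+=\mathrm{span}(\nu^+_{\omega_i},f_i\nu^+_{\omega_i})$ of the $g$-action on $V^+_{\omega_i}$; hence the left-hand side equals $\det M_i(g)$. Both sides are regular on $G$ with the same $H$-bi-weight $(2\omega_i-\alpha_i,\,2\omega_i-\alpha_i)$, using the transformation rule $\Delta_{\mu,\nu}(h_1gh_2)=h_1^{\mu}h_2^{\nu}\Delta_{\mu,\nu}(g)$ together with $\omega_i+w_i\omega_i=2\omega_i-\alpha_i=-\sum_{j\neq i}a_{ji}\omega_j$ (the last equality follows from $\sum_j a_{ji}\omega_j=\alpha_i$ and $a_{ii}=2$). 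By $H$-bi-equivariance, verifying the identity on the big cell $N_-HN_+$ reduces to its $h=1$ case on $N_-N_+$.

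Finally I would verify $\det M_i(n_-n_+)=1$, matching $\prod_{j\neq i}\Delta^{\omega_j}(n_-n_+)^{-a_{ji}}=1$. Writing $n_-\nu^+_{\omega_i}=\nu^+_{\omega_i}+a\,f_i\nu^+_{\omega_i}+(\textrm{strictly lower})$ and $n_+f_i\nu^+_{\omega_i}=f_i\nu^+_{\omega_i}+b\,\nu^+_{\omega_i}$ exactly (since the only weights of $V^+_{\omega_i}$ above $w_i\omega_i$ are $w_i\omega_i$ and $\omega_i$, and among the simple root vectors only $e_i$ maps $f_i\nu^+_{\omega_i}$ into the $\nu^+_{\omega_i}$-line via $[e_i,f_i]=\check{\alpha}_i$), direct computation gives $M_i(n_-n_+)$ with entries $1,b,a,1+ab$ and determinant $1$. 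Density of the big cell together with regularity then extends the identity to all of $G$. The most delicate step is the compatibility of Weyl-group lifts in the reduction: without the length hypotheses, $\widetilde{uw_i}$ need not equal $\tilde{u}\tilde{s}_i$ and the identity would acquire extra signs that spoil the cancellation.
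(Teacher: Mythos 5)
The paper itself offers no proof of this proposition: it is quoted directly from Fomin--Zelevinsky (\cite{FZ}, Theorem~1.17), so there is no internal argument to compare against. Your proof is correct and self-contained, and it supplies what the citation leaves implicit. The reduction to $u=v=1$ is legitimate precisely because the length hypotheses give $\widetilde{uw_i}=\tilde u\tilde s_i$ and $\widetilde{vw_i}=\tilde v\tilde s_i$ for the compatible lifts, while $w_i\omega_j=\omega_j$ for $j\neq i$ leaves the right-hand side untouched; the identification of the four minors with the entries of the $2\times2$ block of $g$ on $\mathrm{span}(\nu^+_{\omega_i},f_i\nu^+_{\omega_i})$ is right, as is the weight match $\omega_i+w_i\omega_i=2\omega_i-\alpha_i=-\sum_{j\neq i}a_{ji}\omega_j$ (using $\alpha_i=\sum_j a_{ji}\omega_j$ in the paper's convention $a_{ij}=\langle\alpha_j,\check\alpha_i\rangle$), which both justifies the reduction to $N_-N_+$ and explains \emph{why} the exponents on the right must be $-a_{ji}$. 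The final computation $\det\begin{pmatrix}1&b\\ a&1+ab\end{pmatrix}=1$ against $\prod_{j\neq i}1^{-a_{ji}}=1$, followed by density of the big cell and regularity of all the minors, closes the argument. Two details are worth recording explicitly: the claim $n_+f_i\nu^+_{\omega_i}=f_i\nu^+_{\omega_i}+b\,\nu^+_{\omega_i}$ with no further terms rests on the fact that the only weights of $V^+_{\omega_i}$ lying above $\omega_i-\alpha_i$ are $\omega_i-\alpha_i$ and $\omega_i$ (and that $e_jf_i\nu^+_{\omega_i}=0$ for $j\neq i$), which you state; and the normalization of the lift $\tilde s_i$ cancels between the two cross terms $\Delta^{\omega_i}(\tilde s_i^{-1}g)\,\Delta^{\omega_i}(g\tilde s_i)$, so the reduced identity is insensitive to it, although the well-definedness of $\Delta_{u\omega_i,v\omega_i}$ itself (independence of the reduced word for $u$) does require the specific Fomin--Zelevinsky lifts. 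In short: correct, and a genuine improvement in self-containedness over the paper's bare citation.
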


\subsection{$(G,q)$-Wronskians and Generalized Minors}
First, we introduce a notion of $(G,q)$-Wronskian which, as we will see later, under certain nondegeneracy conditions, is  
equivalent to the definition of $Z$-twisted Miura $(G,q)$-oper.

Let $V^+_i$ be the irreducible representation of $G$ with highest weight
$\omega_i$ with respect to $B_+$. It comes equipped with a line $L^+_i
\subset V^+_i$ of the highest weight vectors stable under the action of
$B_+$. Let 
$\nu^+_{\omega_i}$ be a generator of the line $L^+_i \subset V_i^+$. It is a
vector of weight $\omega_i$ with respect to our maximal torus $H
\subset B_-$. The subspace $L^{+}_{c,i}$ of $V_i$ of weight $c^{-1}\cdot\omega_i$
is one-dimensional and is spanned by ${s}^{-1}
\nu^+_{\omega_i}$.

Suppose we have a principal $G$-bundle $\cF_G $ and its $B_+$-reduction $\cF_{B_+} $ and thus an $H$-reduction  $\cF_{H} $ as well.
Therefore for each
$i=1,\ldots,r$, the vector bundle
$$
\cV^+_i = \cF_{B_+} \underset{B_+}\times V^+_i = \cF_G \underset{G}\times
V^+_i
$$
associated to $V^+_i$ contains an $H$-line subbundles
$$
\cL^+_i = \cF_{H} \underset{H}\times L^+_i, \quad \cL^{+}_{c,i} = \cF_{H} \underset{H}\times L^{+}_{c,i}
$$
associated to $L^+_i, L^{+}_{c,i} \subset V^+_i$.

Consider a meromorphic section $\mathscr{G}$ of $\mathcal{F}_G$. It is a section of $\mathcal{F}_G$ on $U$, a Zariski dense set of $\mathbb{P}^1$. Given the fact that one can always choose $U$, so that restriction of $\mathcal{F}_G$ to $U$ is a trivial $G$-bundle, one can express this section as an element $\mathscr{G}(z)\in G(z)$.

\begin{Def}    \label{genqwr}
The {\it $(G,q)$-Wronskian} on $\mathbb{P}^1$ is the quadruple $(\cF_G,\cF_{B_+}, \mathscr{G}, Z)$, where $\mathscr{G}$ is a meromorphic section of a principle bundle $\cF_G$, $\cF_{B_+}$ is a reduction of $\cF_G$ to $B_+$, $Z\in H=B_+/[B_+,B_+]$, satisfying the following condition.
There exist a Zariski open dense subset $U\subset \mathbb{P}^1$ 
together with the trivialization $\imath_{B_+}$ of $\cF_{B_+}$, so that 
for certain $\{v_i^+, v_{c,i}^+\}_{i=1, \dots, r}$ which are the sections of line bundles  $\{\mathcal{L}^+_i, \mathcal{L}^{+}_{c,i}\}_{i=1, \dots, r}$ on 
$U \cap M_q^{-1}(U)$ we have $\mathscr{G}$ as an element of $G(z)$ satisfy the following condition:
\begin{eqnarray}
\mathscr{G}^q \cdot v_i^{+}=Z\cdot \mathscr{G}\cdot v_{c,i}^+, 
\end{eqnarray}
where the superscript $q$ stands for the pull-back of the corresponding section with respect to the map $M_q$.
\end{Def}

Effectively, the definition implies that  
there exists a Zariski open dense subset $U \subset \P^1$ together with a trivialization $\imath_{B_+}$ of $\mathcal{F}_{B_+}$ such that the restriction of $\mathscr{G}$ to $U \cap M_q^{-1}(U)$ written as an element of $G(z)$ satisfies the following conditions
\begin{eqnarray}\label{eq:qwr}
Z^{-1}\mathscr{G}(qz)~ \nu^+_{\omega_i}=\mathscr{G}(z)\cdot s_{\phi}(z)^{-1}\cdot\nu^+_{\omega_i},
\end{eqnarray}
where $s_\phi(z)=\prod_i\phi_i^{-\check{\alpha}_i}s_i$ is a lift of the Coxeter element $c\in W$ to $G(z)$, which is fixed for all $i\in \{1, \dots, r\}$. 

It is clear that the structure of the $(G,q)$-Wronskian depends on the generalized minors of $\mathscr{G}(z)$ through the action of $\mathscr{G}(z)$ on $\nu^+_i$ and the choice of the lift $s_{\phi}(z)$, which through the coefficients $\{\phi_i(z)\}_{i=1,\dots, r}$ depends on the choice of the sections $\{v_i^+, v_{c,i}^+\}_{i=1, \dots, r}$.

The following two definitions clarify the type of objects will restrict the type of $(G,q)$-Wronskians we will study in this paper.

\begin{Def}
$(G,q)$-Wronskian has {\it regular singularities} if 
\begin{eqnarray}\label{eq:Invordering}
s_{\Lambda}(z)^{-1}=\prod^{\rm inv}_{i}s_i\Lambda_i^{\check{\alpha}_i},
\end{eqnarray}
where $\{\Lambda_i\}_{i=1,\dots, r}$ are polynomials, and the superscript ``inv" stands for the inverse order to the ordering in the Coxeter element $c$.
\end{Def}

Let us give a simple example of $(G,q)$-Wronskian in the lowest rank case and give the relation to the $QQ$-system.\\

\noindent {\bf Example.}
Let $G=SL(2)$ then \eqref{eq:qwr} reads 
\begin{equation}\label{eq:Wdiffeqsl2}
\mathscr{G}(q z)\nu^+_{\omega}=Z \mathscr{G}(z)s^{-1}(z)\nu^+_{\omega}\,.
\end{equation}
In this case 
\begin{equation}
s^{-1}(z)=\tilde s^{-1}\Lambda(z)^{\check{\alpha}}= \begin{pmatrix} 0&\Lambda(z)^{-1} \\ \Lambda(z)&0\end{pmatrix}\,,\qquad \nu_\omega^+=\begin{pmatrix} 1\\0\end{pmatrix}\,,\qquad Z= \begin{pmatrix} \zeta&0\\0&\zeta^{-1}\end{pmatrix}\,.
\end{equation}
One can immediately see that 
\begin{equation}
\mathscr{W}(z)=\begin{pmatrix} \Delta_{\omega_i, \omega_i}(\mathscr{G}(z)) & \Delta_{\omega_i, s^{-1}\omega_i}(\mathscr{G}(z))\\ \Delta_{s\omega_i,  \omega_i}(\mathscr{G}(z))& \Delta_{s\omega_i, s^{-1}\omega_i}(\mathscr{G}(z))\end{pmatrix}
\end{equation}
satisfies \eqref{eq:Wdiffeqsl2} provided that the following relations take place:
\begin{equation}
\Delta_{\omega_i, \omega_i}(\mathscr{G}(qz))=\zeta \Lambda(z)\Delta_{s\omega_i, \omega_i}(\mathscr{G}(z))\,,\qquad \Delta_{\omega_i,  s^{-1}\omega_i}(\mathscr{G}(qz)) = \zeta^{-1}\Lambda(z)\Delta_{s\omega_i,  s^{-1}\omega_i}(\mathscr{G}(z))\,.
\end{equation}
Using the identification $\Delta_{\omega_i, \omega_i}(\mathscr{G}(z))\equiv Q^+(z)$, $\Delta_{s\omega_i,  \omega_i}(\mathscr{G}(z))\equiv Q^-(z)$, we obtain
\begin{equation}
\mathscr{G}(z)=\begin{pmatrix} Q^+(z)& \zeta^{-1} \Lambda(z)^{-1} Q^+(qz)\\ Q^-(z) & \zeta \Lambda(z)^{-1} Q^-(qz)\end{pmatrix}\,,
\end{equation}
where we put $Q_+(z)=Q^\omega(z)$ and $Q_-(z)=Q^{s\omega}(z)$ according to the notations from our previous papers. Notice that the condition that $\mathscr{G}(z)\in SL(2)$, i.e.  det$\mathscr{G}(z)=1$ leads to the $QQ$-equation:
$$
\zeta Q^+(z) Q^-(qz)-\zeta^{-1} Q^+(qz) Q^-(z) = \Lambda(z).
$$

This example justifies the name $(G,q)$-Wronskian as a generalization of the q-Wronskian matrix.
However, we will see that in this form it is not uniquely defined for higher rank. Also, to get in touch with nondegenerate $QQ$-systems, we have to put relevant nondegeneracy conditions on $(G,q)$-Wronskians.

\begin{Def}
We say that  $(G,q)$-Wronskian with regular singularities is {\it nondegenerate} if  $\Delta_{w\cdot \omega_i,\omega_i}(\mathscr{G}(z))$ are nonzero polynomials for all $w\in W$ and $i=1,\dots, r$.  
For all $i,j,k$ with $i \neq j$ and $a_{ik}, a_{jk} \neq 0$, the
zeros of $\Delta_{\omega_i, \omega_i}$ and $\Delta_{s_i\omega_i, \omega_i}$ are $q$-distinct from each other, and also zeroes of $\Delta_{w\cdot \omega_i, \omega_i}$ are $q$-distinct from the zeros of $\{\Lambda_k(z)\}_{k=1, \dots, r}$ for all $i$.
\end{Def}

These definitions lead to the following Corollary.

\begin{Cor}
The nondegeneracy condition of $(G,q)$-Wronskian with regular singularities implies:
\begin{enumerate}
\item $\mathscr{G}(z)$ admits Gaussian decomposition: $\mathscr{G}(z)\in N_-(z)H(z)N_+(z)$, 
\item $\mathscr{G}(z)$ belongs to the largest Bruhat cell: $\mathscr{G}(z)\in B_+(z)w_0B_+(z)$. 
\end{enumerate}
\end{Cor}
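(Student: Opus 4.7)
The plan is to reduce both statements to the two general results on generalized minors recalled earlier in this section, applied over the field $F=\mathbb{C}(z)$ rather than $\mathbb{C}$. The key observation is that nondegeneracy was defined precisely so that every generalized minor $\Delta_{w\cdot\omega_i,\omega_i}(\mathscr{G}(z))$ is a \emph{nonzero} polynomial in $z$, hence a nonzero (equivalently, invertible) element of $\mathbb{C}(z)$. Since the results of Berenstein--Fomin--Zelevinsky quoted above are statements of algebraic geometry valid over an arbitrary field, it is legitimate to view $\mathscr{G}(z)$ as an element of $G(\mathbb{C}(z))$ and apply those results there.

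For part (1), I would take $w=e$ in the nondegeneracy hypothesis. Then $\Delta_{\omega_i,\omega_i}(\mathscr{G}(z))=\Delta^{\omega_i}(\mathscr{G}(z))$ is a nonzero polynomial for each $i=1,\dots,r$, hence a nonzero element of $\mathbb{C}(z)$. The proposition recalled just above (Corollary~2.5 of \cite{FZ}) characterizes elements of $G(F)$ admitting Gaussian decomposition as precisely those whose principal minors $\Delta^{\omega_i}$ are all nonzero in $F$. Applying this with $F=\mathbb{C}(z)$, we conclude $\mathscr{G}(z)\in N_-(z)H(z)N_+(z)$.

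For part (2), I would appeal to the Corollary stated immediately after the definition of generalized Pl\"ucker coordinates: if none of the $\Delta_{w\cdot\omega_i,\omega_i}(g)$ vanish, then $g\in B_+w_0B_+$. By nondegeneracy, the full collection $\{\Delta_{w\cdot\omega_i,\omega_i}(\mathscr{G}(z))\}_{w\in W_G,\,i=1,\dots,r}$ consists of nonzero polynomials, hence of nonzero elements of $\mathbb{C}(z)$. Applying the Corollary to $\mathscr{G}(z)\in G(\mathbb{C}(z))$ yields $\mathscr{G}(z)\in B_+(z)w_0B_+(z)$.

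The only genuine subtlety, and what I would call the main thing to check carefully, is that ``nonzero polynomial'' really is the right form of nondegeneracy: one must rule out that some minor vanishes identically in $z$ (which would put $\mathscr{G}(z)$ outside the Bruhat cell generically), as opposed to merely vanishing on a proper subvariety of $\mathbb{P}^1$. The nondegeneracy hypothesis is exactly designed to exclude the first possibility, while the second is harmless since both $N_-HN_+$ and $B_+w_0B_+$ are Zariski open subsets of $G$, and membership of a rational map in a Zariski open set is a property that is automatic once the associated regular-function conditions are satisfied generically on the source. Thus, no further work is needed beyond invoking the two cited results over $\mathbb{C}(z)$.
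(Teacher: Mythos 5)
Your proposal is correct and follows essentially the same route as the paper: part (1) is exactly the application of Corollary~2.5 of \cite{FZ} to the nonvanishing principal minors $\Delta_{\omega_i,\omega_i}(\mathscr{G}(z))$ (the case $w=e$ of nondegeneracy), and part (2) is the Corollary on generalized Pl\"ucker coordinates (Proposition~3.3 of \cite{Fomin:wy}) applied to the full collection $\{\Delta_{w\cdot\omega_i,\omega_i}(\mathscr{G}(z))\}$. Your added remark about interpreting everything over the field $\mathbb{C}(z)$, where a nonzero polynomial is invertible, is a sensible elaboration of a point the paper leaves implicit.
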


\begin{proof}
Condition (2) implies first of all that $\Delta_{\omega_i,\omega_i}(\mathscr{G}(z))\neq 0$. That implies Gaussian decomposition according to Corollary 2.5 of \cite{FZ}. The second property follows from Proposition 3.3 of \cite{Fomin:wy}.
\end{proof}

An important property is a non-uniqueness of the $(G,q)$-Wronskian as defined by the generalized minors.

\begin{Prop}\label{nonuniqdef}
Given a solution $\mathscr{G}(z)$ of the equation (\ref{eq:qwr}), $\mathscr{G}(z)n^+(z)$ is a solution of (\ref{eq:qwr}) if and only if 
\begin{equation}
s~n_+(z)~s^{-1}\in N_+(z).
\end{equation}
\end{Prop}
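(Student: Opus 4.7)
The plan is to substitute $\tilde{\mathscr{G}}(z) = \mathscr{G}(z) n_+(z)$ directly into \eqref{eq:qwr} and convert the resulting constraint on $n_+(z)$, phrased in terms of highest weight vectors, into the Lie-theoretic statement in the proposition.

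First I would use that every positive root vector annihilates $\nu^+_{\omega_i}$, so $n_+(qz)\,\nu^+_{\omega_i} = \nu^+_{\omega_i}$. Hence the left-hand side of \eqref{eq:qwr} for $\tilde{\mathscr{G}}$ equals $Z^{-1}\mathscr{G}(qz)\nu^+_{\omega_i}$, which by the assumed equation for $\mathscr{G}$ equals $\mathscr{G}(z)\,s_\phi(z)^{-1}\,\nu^+_{\omega_i}$, while the right-hand side expands to $\mathscr{G}(z)\, n_+(z)\, s_\phi(z)^{-1}\,\nu^+_{\omega_i}$. Cancelling the invertible $\mathscr{G}(z)$ and then conjugating by $s_\phi(z)$, the equation \eqref{eq:qwr} holds for $\tilde{\mathscr{G}}$ if and only if
\begin{equation*}
s_\phi(z)\, n_+(z)\, s_\phi(z)^{-1}\, \nu^+_{\omega_i} \;=\; \nu^+_{\omega_i}, \qquad i=1,\dots,r.
\end{equation*}

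Next I would invoke the standard fact that the simultaneous stabilizer in $G$ of the highest weight vectors $\nu^+_{\omega_1},\dots,\nu^+_{\omega_r}$ across all fundamental representations is exactly $N_+$: the unipotent radical fixes each such vector, while $h\in H$ acts on $\nu^+_{\omega_i}$ by the character $\omega_i$, and since $G$ is simply connected the $\omega_i$ form a $\Z$-basis of the character lattice of $H$, so no nontrivial torus element can fix all of them. Consequently the displayed condition is equivalent to $s_\phi(z)\, n_+(z)\, s_\phi(z)^{-1} \in N_+(z)$.

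To finish, I would decompose $s_\phi(z) = h(z)\cdot s$ by repeatedly pushing Cartan factors past simple reflections via the identity $s_j\, \phi_k^{-\check{\alpha}_k} = \phi_k^{-s_j(\check{\alpha}_k)}\, s_j$; this yields $h(z) = \prod_i \phi_i(z)^{-w_1\cdots w_{i-1}(\check{\alpha}_i)} \in H(z)$. Then
\begin{equation*}
s_\phi(z)\, n_+(z)\, s_\phi(z)^{-1} \;=\; h(z)\bigl(s\, n_+(z)\, s^{-1}\bigr)h(z)^{-1},
\end{equation*}
and because conjugation by $H(z)$ merely rescales each positive root subgroup and hence preserves $N_+(z)$ as a set, the previous condition is equivalent to $s\,n_+(z)\,s^{-1}\in N_+(z)$, which is the claim. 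The only non-bookkeeping step is the stabilizer identification in the second paragraph, and it is the one place where simple-connectedness of $G$ is essential; everything else reduces to routine manipulations with Chevalley generators.
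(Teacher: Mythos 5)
Your argument is correct; the paper states this proposition without proof, and your substitution-and-stabilizer argument is clearly the intended one. The only step you compress is the containment of the simultaneous stabilizer of the vectors $\nu^+_{\omega_i}$ inside $B_+$ (via the line stabilizers, $\bigcap_i P_i = B_+$), but that is indeed standard, and the remaining steps --- the triviality of $n_+(qz)$ on highest-weight vectors, the factorization $s_\phi(z)=h(z)\,s$ with $h(z)\in H(z)$, and the fact that conjugation by $H(z)$ preserves $N_+(z)$ --- are all handled correctly.
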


Later we will eliminate this ambiguity and add more constraints than (\ref{eq:qwr}), but first we 
investigate its lower triangular part and relate it to $QQ$-system and $q$-opers.

Let us list another important property of $(G,q)$-Wronskian:
\begin{Prop}\label{weylwr}
For any $w\in W$ and the $(G,q)$-Wronskian $(\cF_G,\cF_{B_+}, \mathscr{G},Z)$ with regular singularities, 
the element $\tilde{w}\cdot\mathscr{G}(z)$ stands for $(G,q)$-Wronskian $(\cF_G,\cF_{B_+}, \mathscr{G}, w(Z))$ with the same regular singularities.
\end{Prop}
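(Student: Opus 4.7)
The plan is to verify directly that the defining equation of a generalized $q$-Wronskian with twist $Z$ transforms, under left multiplication by $\tilde{w}$, into the same equation with twist $w(Z)$ and with the same Coxeter lift $s_\phi(z)$. The starting point is the characterizing identity \eqref{eq:qwr}
\begin{equation*}
Z^{-1}\mathscr{G}(qz)\,\nu^+_{\omega_i} \;=\; \mathscr{G}(z)\,s_\phi(z)^{-1}\,\nu^+_{\omega_i},\qquad i=1,\dots,r,
\end{equation*}
and the one fact I need from Weyl group theory is the standard conjugation formula $\tilde{w} Z \tilde{w}^{-1} = w(Z)$ in $H\subset G$ (this is the very definition of the $W_G$-action on $H$, well-defined for $Z \in H$ because $H$ is normalized by $N(H)$).

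Next I would multiply the above equation on the left by $\tilde{w}$ and rewrite $\tilde{w}Z^{-1} = w(Z)^{-1}\tilde{w}$ using the conjugation identity. Setting $\mathscr{G}'(z):=\tilde{w}\,\mathscr{G}(z)$ one obtains
\begin{equation*}
w(Z)^{-1}\mathscr{G}'(qz)\,\nu^+_{\omega_i} \;=\; \mathscr{G}'(z)\,s_\phi(z)^{-1}\,\nu^+_{\omega_i},
\end{equation*}
which is precisely the defining relation \eqref{eq:qwr} with $Z$ replaced by $w(Z)$. Because $s_\phi(z)=\prod_i^{\mathrm{inv}} s_i\,\Lambda_i(z)^{\check\alpha_i}$ only involves the fixed polynomials $\{\Lambda_i(z)\}$ and the chosen lifts $s_i$, and is not affected by either $Z$ or $\mathscr{G}$, the regular singularity data determined by $\{\Lambda_i(z)\}_{i=1,\dots,r}$ is unchanged.

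It remains to check that $\tilde{w}\mathscr{G}$ actually defines a legitimate quadruple in the sense of Definition \ref{genqwr}. With respect to the trivialization $\imath_{B_+}$ that was used to write $\mathscr{G}$ as an element of $G(z)$, the section $\tilde{w}\mathscr{G}(z)$ is again an element of $G(z)$, and the sections $v_i^+$ and $v_{c,i}^+$ of the associated line bundles $\mathcal{L}^+_i$ and $\mathcal{L}^+_{c,i}$ can be kept unchanged, since the lift $s_\phi(z)$ does not move. I do not foresee any real obstacle: the entire argument is bookkeeping around the single conjugation identity $\tilde{w} Z \tilde{w}^{-1} = w(Z)$, and the only point worth flagging is that the statement is really about replacing the third entry of the quadruple by $\tilde{w}\cdot\mathscr{G}$ (with the $B_+$-reduction and regular singularities preserved), so that propagation through the Weyl orbit of $Z$ comes for free from the equivariance of \eqref{eq:qwr} under left multiplication by lifts of elements of $W_G$.
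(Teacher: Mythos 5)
Your argument is correct and is exactly the paper's proof (the paper simply states that the result follows by ``direct application of $\tilde{w}$ to $\mathscr{G}(z)$ in \eqref{eq:qwr}''): left-multiplying \eqref{eq:qwr} by $\tilde{w}$ and commuting it past $Z^{-1}$ via $\tilde{w}Z^{-1}\tilde{w}^{-1}=w(Z)^{-1}$ yields the same relation for $\tilde{w}\mathscr{G}$ with twist $w(Z)$ and the unchanged lift $s_\phi(z)$, hence the same regular singularities. No gaps.
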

\begin{proof}
The proof is obtained by the direct application of $\tilde{w}$ to $\mathscr{G}(z)$ in (\ref{eq:qwr}).
\end{proof}

\subsection{Extended $QQ$-system for $(G,q)$-Wronskian}
In this subsection, we find the relation between $(G,q)$-Wronskians and the $QQ$-systems via the fundamental relation (\ref{eq:minorsgen}) applied to $\mathscr{G}(z)$.

First, we formulate a Proposition, which allows to reformulate a specific subset in the family of relations (\ref{eq:minorsgen}).

\begin{Prop}
Minors $\Delta_{\omega_i, \omega_i}$, $\Delta_{w_i\omega_i, c^{-1}\cdot\omega_i}$,  $\Delta_{w_i\cdot\omega_i, \omega_i}$, $\Delta_{\omega_i, c^{-1}\cdot\omega_i}$ satisfy the following relation:
\begin{align}\label{eq:sminors}
\Delta_{\omega_i, \omega_i}\Delta_{w_i\cdot\omega_i, c^{-1}\cdot \omega_i}-&\Delta_{w_i\cdot\omega_i, \omega_i}\Delta_{\omega_i, c^{-1}\cdot\omega_i}\nonumber\\
&=\prod_{j<{i=i_{l}}}\Delta_{\omega_j, c^{-1}\cdot\omega_j}^{-a_{ji}}
\prod_{j>{i=i_{l}}}\Delta_{\omega_j, \omega_j}^{-a_{ji}}, \qquad i=1,\dots, r,
\end{align}
where the ordering is taken with respect to the decomposition of 
${c}^{-1}=w_{i_1}\dots, w_{i_l}, \dots, w_{i_r}$.
\end{Prop}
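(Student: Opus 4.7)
The plan is to specialize the general Fomin–Zelevinsky identity \eqref{eq:minorsgen} to a carefully chosen pair $(u,v)$ and then use the defining property $w_k\cdot\omega_j=\omega_j$ for $k\neq j$ to simplify both sides. Concretely, write the fixed Coxeter decomposition as $c^{-1}=w_{i_1}\cdots w_{i_{l-1}}\,w_{i_l}\,w_{i_{l+1}}\cdots w_{i_r}$ with $i_l=i$, and set
\[
u=e,\qquad v=w_{i_1}\cdots w_{i_{l-1}}.
\]
Then $uw_i=w_i$ and $vw_i=w_{i_1}\cdots w_{i_l}$. Because the Coxeter word is reduced (every simple reflection appears exactly once), every initial segment is a reduced expression, so $\ell(vw_i)=\ell(v)+1=l$ and trivially $\ell(uw_i)=\ell(u)+1=1$. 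Hence the hypotheses of the preceding proposition are met.

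Next I would identify the four minors on the left-hand side of \eqref{eq:minorsgen}. Since $w_k\cdot\omega_i=\omega_i$ for $k\neq i$, the reflections appearing in $v$ all fix $\omega_i$, giving $v\cdot\omega_i=\omega_i$. Similarly, the reflections $w_{i_{l+1}},\dots,w_{i_r}$ appearing after $w_{i_l}$ in $c^{-1}$ all fix $\omega_i$, so $vw_i\cdot\omega_i=w_{i_1}\cdots w_{i_l}\cdot\omega_i=c^{-1}\cdot\omega_i$. Together with $u\cdot\omega_i=\omega_i$ and $uw_i\cdot\omega_i=w_i\cdot\omega_i$, the LHS of \eqref{eq:minorsgen} becomes exactly
\[
\Delta_{\omega_i,\omega_i}\,\Delta_{w_i\cdot\omega_i,\,c^{-1}\cdot\omega_i}\;-\;\Delta_{w_i\cdot\omega_i,\,\omega_i}\,\Delta_{\omega_i,\,c^{-1}\cdot\omega_i},
\]
which matches the left-hand side of \eqref{eq:sminors}.

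The heart of the argument is the identification of the product $\prod_{j\neq i}\Delta_{\omega_j,\,v\cdot\omega_j}^{-a_{ji}}$ on the right-hand side with the product written in \eqref{eq:sminors}. For each $j\neq i$, let $m$ be the unique index with $j=i_m$. If $m>l$, then $w_j$ does not appear in the word for $v$, and since all other $w_{i_k}$ fix $\omega_j$, one has $v\cdot\omega_j=\omega_j$. If $m<l$, then $v=w_{i_1}\cdots w_{i_m}\cdots w_{i_{l-1}}$; the reflections after $w_{i_m}$ (those with index $k$ satisfying $m<k\le l-1$) are distinct from $w_j$ and so fix $\omega_j$, giving $v\cdot\omega_j=w_{i_1}\cdots w_{i_m}\cdot\omega_j$. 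The same cancellation applied to the \emph{tail} of the Coxeter word $w_{i_{m+1}}\cdots w_{i_r}$ shows $c^{-1}\cdot\omega_j=w_{i_1}\cdots w_{i_m}\cdot\omega_j$, hence $v\cdot\omega_j=c^{-1}\cdot\omega_j$. Splitting the product over $j\neq i$ according to whether $m<l$ or $m>l$ yields exactly the right-hand side of \eqref{eq:sminors}.

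The only subtle step, which I expect to be the main obstacle to state cleanly, is the bookkeeping of how $v$ and $c^{-1}$ act on each $\omega_j$; once one observes that everything reduces to the elementary fact $w_k\cdot\omega_j=\delta_{jk}$-controlled, and that the Coxeter word is reduced (so no length hypotheses fail), the identity follows at once from \eqref{eq:minorsgen} applied to the chosen $(u,v)$. No further structural input is needed.
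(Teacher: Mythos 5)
Your proof is correct and follows essentially the same route as the paper: both specialize the Fomin--Zelevinsky identity to $u=1$, $v=w_{i_1}\cdots w_{i_{l-1}}$ and then use $w_k\cdot\omega_j=\omega_j$ for $k\neq j$ to identify $v\cdot\omega_j$ with $c^{-1}\cdot\omega_j$ or $\omega_j$ according to whether $j$ precedes or follows $i$ in the Coxeter ordering. Your version merely spells out the length hypotheses and the weight bookkeeping that the paper leaves implicit.
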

\begin{proof}
To prove that let us apply the relation (\ref{eq:minorsgen}) to the case when $u=1$, $v=w_{i_1}w_{i_2}\dots w_{i_{l-1}}$, so that $w_{i_{l}}=w_i$. Then $v\cdot \omega_j=c^{-1}\cdot \omega_j$ if $j\le {i_l}=i$ and $v\cdot \omega_j=\omega_j$ if $j>i_l=i$ and the statement of the Proposition follows immediately. 
\end{proof}

To apply this set of relations to $\mathscr{G}(z)$ and make full use of the difference equation it satisfies, we will need the following technical Lemma.

\begin{Lem}

Let $c^{-1}=\prod^{\rm inv}_i w_i=w_{i_1}w_{i_2}\dots w_{i_r}$ corresponds to the  lift of the inverse Coxeter element to $G$. Then $s_{\Lambda}^{-1}(z)=\prod_i^{{\rm inv}}\left(s_i \,\Lambda_i^{\check{\alpha}_i}\right)$ can be expressed as follows:
\begin{equation}
\label{eq:SinvLiftCalculation}
s_{\Lambda}^{-1}(z)= {s}^{-1} \prod_i \Lambda_i^{d_i}\,,
\end{equation}
where $d_i = \sum_{j=1}^i d_{ij} \check{\alpha}_j$ and 
\begin{align}
d_{ij} &= a_{i,i-j}-\sum_{l=1}^{i-j-1} a_{i,i-l}\cdot a_{i-l,i-j} + \sum_{l>m} a_{i,i-l}\cdot a_{i-l,i-l-m}\cdot a_{i-l-m,i-j}-\dots \cr
&+(-1)^r a_{i,i-1}\cdot a_{i-1,i-2}\cdots a_{i-j-1,i-j}\,,
\end{align}
for $j<i$ and $d_{ii}=1$.

\end{Lem}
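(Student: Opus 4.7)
\medskip

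\textbf{Proof plan.} The plan is to reduce the identity to a direct commutation calculation in $G(z)$. The key ingredient is the torus--Weyl commutation rule
\begin{equation*}
t^{\check{\alpha}}\cdot s_j \;=\; s_j\cdot t^{w_j(\check{\alpha})}
\end{equation*}
valid for any coroot $\check{\alpha}$ and any $t$ in the Cartan torus; this follows from the fact that $s_j^{-1}h s_j$ implements the action of the simple reflection $w_j$ on $H$ (and from $w_j^{-1}=w_j$).

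First I would unwind the definition of $\prod_i^{\mathrm{inv}}(s_i\Lambda_i^{\check{\alpha}_i})$ in the ordering $c^{-1}=w_{i_1}\cdots w_{i_r}$, writing it as an alternating product of $s_{i_k}$'s and $\Lambda_{i_k}^{\check{\alpha}_{i_k}}$'s. Then, starting from the leftmost Cartan factor, I would push each $\Lambda_{i_k}^{\check{\alpha}_{i_k}}$ rightward through the chain $s_{i_{k+1}},s_{i_{k+2}},\ldots,s_{i_r}$ that sits to its right, applying the commutation relation at each step. Since any two torus elements commute, the Cartan factors collected on the right can be freely reordered as I go; and once every $\Lambda$ has been pushed all the way to the right, the remaining product of $s_{i_k}$'s on the left is exactly $s^{-1}=s_{i_1}s_{i_2}\cdots s_{i_r}$. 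Each $\Lambda_{i_k}$ then ends up with exponent $\sigma_{i_k}(\check{\alpha}_{i_k})$, where $\sigma_{i_k}$ is the explicit product of simple reflections encoding the $s_j$'s it was transported across.

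What remains is to expand $\sigma_i(\check{\alpha}_i)$ as an integer linear combination of $\check{\alpha}_1,\ldots,\check{\alpha}_i$, which is precisely the statement $d_i=\sum_{j=1}^{i}d_{ij}\check{\alpha}_j$. For this I would iterate the elementary identity $w_k(\check{\alpha}_\ell)=\check{\alpha}_\ell-a_{\ell k}\check{\alpha}_k$ one reflection at a time. An induction on the number of reflections shows that the coefficient of $\check{\alpha}_j$ is a signed sum indexed by strictly descending chains of indices from $i$ down to $j$, where a chain of length $m$ contributes $(-1)^m$ times the product of the Cartan matrix entries along its consecutive steps. Organizing this sum by chain length and by the intermediate indices $i-l$, $i-l-m$, etc., reproduces the alternating multiple-sum formula for $d_{ij}$ stated in the lemma, with $d_{ii}=1$ coming from the trivial (length zero) chain.

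The real work is not conceptual but organizational: one must match $\sigma_i$ to the correct initial segment of $c^{-1}$ after the commutation moves, and carefully track signs and summation ranges while iterating $w_k(\check{\alpha}_\ell)=\check{\alpha}_\ell-a_{\ell k}\check{\alpha}_k$. Once the path-sum combinatorics is set up, nothing deeper is required, and a straightforward induction on $i-j$ (equivalently on chain length) cleanly yields the claimed expansion.
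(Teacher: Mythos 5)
The paper states this lemma without proof, so there is no argument to compare against; your commutation-based derivation is exactly the computation the authors evidently have in mind, and it is correct. Pushing each Cartan factor $\Lambda_{i_k}^{\check{\alpha}_{i_k}}$ rightward through the remaining $s_j$'s via $t^{\check{\alpha}}s_j=s_j t^{w_j(\check{\alpha})}$, collecting $s^{-1}=s_{i_1}\cdots s_{i_r}$ on the left, and then expanding $\sigma_i(\check{\alpha}_i)$ by iterating $w_k(\check{\alpha}_\ell)=\check{\alpha}_\ell-a_{\ell k}\check{\alpha}_k$ is the right route, and your path-sum bookkeeping (descending chains weighted by products of Cartan entries) is the correct combinatorial organization. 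One caveat: trust your own signs rather than the printed formula. A chain of length $m$ contributes $(-1)^m$ times the product of Cartan entries, as you say, so e.g.\ the coefficient of $\check{\alpha}_{i-1}$ in $d_i$ is $-a_{i,i-1}$; this is what reproduces the authors' own check $d_i=\sum_{j\le i}\check{\alpha}_j$ for $SL(r+1)$ (where $-a_{i,i-1}=+1$), whereas the lemma's displayed expression for $d_{ij}$, read literally, has its signs (and its $i-j$ indexing) off by a typo. So your derivation does not literally "reproduce the alternating multiple-sum formula as stated," but it produces the correct one, which is what matters.
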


For instance, for $SL(r+1)$ with a standard ordering along the Dynkin diagram we have $d_i = \sum_{j=1}^i \check{\alpha}_j$.

Now let us apply that to the group element $\mathscr{G}(z)$ and obtain the following Proposition.
\begin{Prop}\label{propfundrel}
Let $\mathscr{G}(z)$ be a non-degenerate $(G,q)$-Wronskian with regular singularities parametrized by the polynomials $\{\Lambda_i(z)\}_{i=1,\dots, r}$. Then we have:

1) The fundamental relation (\ref{eq:qwr}) for  $\mathscr{G}(z)$ is equivalent to the relation 
\begin{equation}\label{eq:DefLF1}
\Delta_{w\cdot\omega_i, c^{-1}\cdot\omega_i}(\mathscr{G}(z))=\Bigg[\prod_j\zeta^{\langle\check{\alpha}_j, w\cdot \omega_i\rangle}_{j}\Bigg]F_i(z)\Delta_{w\cdot \omega_i, \omega_i}(\mathscr{G}(qz))
\end{equation}
for any $w\in W$, where the proportionality coefficients $F_i(z)$ depend on $Z$ and the lift of the Coxeter element to $G(z)$ only:
\begin{equation}\label{eq:DefLF}
F_i(z)=L_i(z)^{-1}\,,\qquad  L_i(z)=\Lambda_i (z)^{d_{i,i}}\cdot \Lambda_{i-1}(z)^{d_{i-1,i}}\cdots \Lambda_1(z)^{d_{1,i}}\,,
\end{equation}

2) The relations between minors (\ref{eq:sminors}) can be written as follows in terms of 
$\{\Delta_{\omega_i, \omega_i}\}_{i=1,\dots, r}$,  $\{\Delta_{w_i\omega_i, \omega_i}\}_{i=1,\dots, r}$ only:
\begin{align}\label{eq:sminors1}
\wt\xi_{i}\Delta_{\omega_i, \omega_i}(\mathscr{G}(z))\Delta_{w_i\omega_i, \omega_i}(\mathscr{G}(qz))-&\xi_{i}\Delta_{w_i\omega_i, \omega_i}(\mathscr{G}(z))\Delta_{\omega_i, \omega_i}(\mathscr{G}(qz))\cr
&=\Lambda_i(z) \prod_{j<{i=i_{l}}}\Delta_{\omega_j, \omega_j}^{-a_{ji}}(\mathscr{G}(qz))
\prod_{j>{i=i_{l}}}\Delta_{\omega_j, \omega_j}^{-a_{ji}}(\mathscr{G}(z))\,, 
\end{align}
where $\xi_i$ and $\wt \xi_i$ are given in \eqref{xi} and the ordering is inherited from the inverse Coxeter element $c^{-1}$.
\end{Prop}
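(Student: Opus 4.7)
The plan is to read off both parts from the defining equation \eqref{eq:qwr} after applying the preceding Lemma, which converts the inverse-Coxeter lift into a Cartan piece acting diagonally.

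\textbf{Part 1.} Starting from the defining equation of the generalized $q$-Wronskian,
\begin{equation*}
\mathscr{G}(qz)\nu^+_{\omega_i}=Z\,\mathscr{G}(z)\,s_{\Lambda}(z)^{-1}\nu^+_{\omega_i},
\end{equation*}
I substitute the preceding Lemma's identity \eqref{eq:SinvLiftCalculation}, $s_{\Lambda}^{-1}(z)=s^{-1}\prod_k\Lambda_k^{d_k}$. The Cartan factor $\prod_k\Lambda_k^{d_k}\in H(z)$ acts on the weight-$\omega_i$ vector $\nu^+_{\omega_i}$ as the scalar $\prod_k\Lambda_k^{\langle\omega_i,d_k\rangle}$, which by the definition of the coefficients $d_{ij}$ is exactly $L_i(z)$ from \eqref{eq:DefLF}. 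Thus $\mathscr{G}(qz)\nu^+_{\omega_i}=L_i(z)\,Z\,\mathscr{G}(z)s^{-1}\nu^+_{\omega_i}$. I then expand both sides in the weight-vector basis $\{\tilde{w}\,\nu^+_{\omega_i}\}_{w\in W}$ using the preceding Proposition on $g\cdot\nu^+_{\omega_i}$: the coefficient of $\tilde{w}\,\nu^+_{\omega_i}$ on the left is $\Delta_{w\cdot\omega_i,\omega_i}(\mathscr{G}(qz))$, and on the right equals $\prod_j\zeta_j^{-\langle\check\alpha_j,w\cdot\omega_i\rangle}L_i(z)\,\Delta_{w\cdot\omega_i,c^{-1}\cdot\omega_i}(\mathscr{G}(z))$, where I used that $Z$ acts on a weight-$w\cdot\omega_i$ vector by $\prod_j\zeta_j^{-\langle\check\alpha_j,w\cdot\omega_i\rangle}$ and the identity $\Delta^{\omega_i}(\tilde{w}^{-1}\mathscr{G}(z)s^{-1})=\Delta_{w\cdot\omega_i,c^{-1}\cdot\omega_i}(\mathscr{G}(z))$. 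Solving for the latter minor produces \eqref{eq:DefLF1} with $F_i=L_i^{-1}$.

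\textbf{Part 2.} Apply the fundamental relation \eqref{eq:sminors} to $g=\mathscr{G}(z)$ and substitute the identity of Part 1 into every minor whose second index is $c^{-1}\cdot\omega_j$. The pairings $\langle\check\alpha_k,\omega_i\rangle=\delta_{ki}$ and $\langle\check\alpha_k,w_i\cdot\omega_i\rangle=\delta_{ki}-a_{ki}$ show that the collected $\zeta$-factors on the left-hand side are precisely $\wt{\xi}_i$ multiplying $\Delta_{\omega_i,\omega_i}(z)\Delta_{w_i\omega_i,\omega_i}(qz)$ and $\xi_i$ multiplying $\Delta_{w_i\omega_i,\omega_i}(z)\Delta_{\omega_i,\omega_i}(qz)$, matching \eqref{xi}. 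On the right-hand side the rational factors $L_j(z)$ introduced by Part 1, together with the factor $L_i(z)$ coming from the left, must collapse to the single polynomial $\Lambda_i(z)$ via
\begin{equation*}
L_i(z)\prod_{j<i}L_j(z)^{-a_{ji}}=\Lambda_i(z),
\end{equation*}
which is the concrete content of the reordering established in the preceding Lemma.

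\textbf{Main obstacle.} Part 1 is essentially a bookkeeping of matrix coefficients and presents no difficulty. The substantive step is verifying the triangular identity $L_i\prod_{j<i}L_j^{-a_{ji}}=\Lambda_i(z)$ in Part 2: it encodes how the alternating-sum coefficients $d_{ij}$ in the Lemma interact with the Cartan matrix. The cleanest route is induction on $i$ using the recursion for $d_{ij}$ implicit in the Lemma, or equivalently, reading the identity directly off the normal-ordering $\prod^{\rm inv}_i\bigl(s_i\,\Lambda_i^{\check\alpha_i}\bigr)\to s^{-1}\prod_i\Lambda_i^{d_i}$ and tracking how the $\Lambda$-exponents redistribute along the inverse Coxeter word.
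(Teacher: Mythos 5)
Your proposal follows the same route as the paper's own proof: use the normal‑ordering Lemma to pull the Cartan factor $\prod_k\Lambda_k^{d_k}$ through $\nu^+_{\omega_i}$ as the scalar $L_i(z)$, compare coefficients of the extremal weight vectors $\tilde w\,\nu^+_{\omega_i}$ to get part 1, then substitute into \eqref{eq:sminors} and collect the $\zeta$- and $L$-factors for part 2. Part 1 is fine as written.

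One concrete slip in part 2: the collapsing identity has the wrong sign in the exponent. Each factor $\Delta_{\omega_j,c^{-1}\cdot\omega_j}$ enters the right-hand side of \eqref{eq:sminors} with exponent $-a_{ji}$, and part 1 replaces it by $\zeta_j\,L_j(z)^{-1}\Delta_{\omega_j,\omega_j}(\mathscr{G}(qz))$, so the accumulated rational factor on the right is $\prod_{j<i}L_j(z)^{+a_{ji}}$; after clearing the overall $L_i(z)^{-1}$ from the left-hand side, the identity you actually need is $L_i(z)\prod_{j<i}L_j(z)^{a_{ji}}=\Lambda_i(z)$, not $L_i(z)\prod_{j<i}L_j(z)^{-a_{ji}}=\Lambda_i(z)$. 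The type-$A$ case already distinguishes them: with $L_i=\Lambda_1\cdots\Lambda_i$ and $a_{i-1,i}=-1$ the correct version reads $L_i/L_{i-1}=\Lambda_i$, while yours would give $L_iL_{i-1}$. The corrected identity is exactly the one the paper invokes (written there as $L_i/L_{i-1}^{-a_{ji}}=\Lambda_i$), and, like you, the paper defers its general verification to an unwritten "explicit calculation"; so apart from the sign your argument is the paper's argument, and neither of you fully discharges that last triangular identity.
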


\begin{proof}
With the help of \eqref{eq:SinvLiftCalculation} the right hand side of \eqref{eq:Zgv} reads 
$$
\mathscr{G}(z)c^{-1}(z)v^+_{\omega_i} = L_i(z) \, \mathscr{G}(z) \tilde{s}^{-1}(z)v^+_{\omega_i}= L_i(z) \mathscr{G}(z) v^+_{c^{-1}\cdot\omega_i}\,,
$$
where $L_i(z)=\prod_i \Lambda_i^{d_i}(z)$. From \eqref{eq:Zgv} we get
\begin{equation}
Z^{-1}\mathscr{G}(qz)v^+_{\omega_i} =  L_i(z) \mathscr{G}(z) v^+_{c^{-1}\omega_i}\,.
\end{equation}
From this relation we can deduce how q-shifted generalized minors are related to unshifted ones:
\begin{equation}
\Delta_{w\cdot\omega_i,c^{-1}\omega_i} (\mathscr{G}(z)) = \Bigg[\prod_j\zeta^{\langle\check{\alpha}_j, w\cdot \omega_i\rangle}_{j}\Bigg]\cdot \frac{\Delta_{\omega_i,\omega_i}(\mathscr{G}(qz))}{L_i(z)}
\end{equation}
\begin{equation}
\Delta_{\omega_i,c^{-1}\omega_i} (\mathscr{G}(z)) = \zeta_i\cdot \frac{\Delta_{\omega_i,\omega_i}(\mathscr{G}(qz))}{L_i(z)}\,,\qquad \Delta_{w_i\cdot\omega_i,s^{-1}\omega_i} (\mathscr{G}(z)) = \zeta_i^{-1} \prod_{j\neq i}\zeta_j^{-a_{ji}}\cdot \frac{\Delta_{w_i\cdot\omega_i,\omega_i}(\mathscr{G}(qz))}{L_i(z)}\,,
\end{equation}
and so on. Thus the quadratic relation reads
\begin{align}\label{eq:sminors2}
\left(\zeta_i^{-1}\prod_{j\neq i}\zeta_j^{-a_{ji}}\right) &\Delta_{\omega_i, \omega_i}(\mathscr{G}(z))\Delta_{w_i\cdot\omega_i, \omega_i}(\mathscr{G}(qz))-\zeta_{i}\,\Delta_{w_i\omega_i, \omega_i}(\mathscr{G}(z))\Delta_{\omega_i, \omega_i}(\mathscr{G}(qz))\cr
&=\left(\prod_{j<i}\zeta_j^{-a_{ji}}\right) \frac{L_i(z)}{L_{i-1}^{-a_{ji}}(z)} \prod_{j<{i=i_{l+1}}}\Delta_{\omega_j, \omega_j}^{-a_{ji}}(\mathscr{G}(qz))
\prod_{j>{i=i_{l+1}}}\Delta_{\omega_j, \omega_j}^{-a_{ji}}(\mathscr{G}(z)), 
\end{align}
which after dividing by common factors yields
\begin{align}\label{eq:sminors3}
\left(\zeta_i^{-1}\prod_{j>i}\zeta_j^{-a_{ji}}\right) &\Delta_{\omega_i, \omega_i}(\mathscr{G}(z))\Delta_{w_i\cdot\omega_i, \omega_i}(\mathscr{G}(qz))-\left(\zeta_{i}\prod_{j<i}\zeta_j^{a_{ji}}\right)\Delta_{w_i\cdot \omega_i, \omega_i}(\mathscr{G}(z))\Delta_{\omega_i, \omega_i}(\mathscr{G}(qz))\cr
&=\frac{L_i(z)}{L_{i-1}^{-a_{ji}}(z)} \prod_{j<{i=i_{l+1}}}\Delta_{\omega_j, \omega_j}^{-a_{ji}}(\mathscr{G}(qz))
\prod_{j>{i=i_{l+1}}}\Delta_{\omega_j, \omega_j}^{-a_{ji}}(\mathscr{G}(z)), 
\end{align}

Finally, using \eqref{eq:DefLF} we can demonstrate by explicit calculation that
\begin{equation}
\frac{L_i(z)}{L_{i-1}^{-a_{ji}}(z)}  = \Lambda_i(z)\,.
\end{equation}
\end{proof}

Thus, identifying 
\begin{eqnarray}\label{identif}
\Delta_{\omega_i, \omega_i}(\mathscr{G}(z))\to Q_+^i(z), \quad \Delta_{w_i\omega_i, \omega_i}(\mathscr{G}(z))\to Q^i_-(z),  
\end{eqnarray}
we obtain that the familiar nondegenerate $QQ$ system  \eqref{qq} is equivalent to \eqref{eq:sminors1}.

Moreover, the following Theorem holds.

\begin{Thm}\label{1to1wrop}
1) Let $(\mathcal{F}_G, \mathcal{F}_{B+}, \mathscr{G}, Z)$ be a non-degenerate  $(G,q)$-Wronskian with regular singularities parametrized by the polynomials $\{\Lambda_i(z)\}_{i=1,\dots, r}$. The lower-triangular part $v(z)\in B_-(z)$ of the Gaussian decomposition $\mathscr{G}=v(z)u(z)$, $u(z)\in N_+(z)$ defines a nondegenerate $Z$-twisted Miura $(G,q)$-oper connection with regular singularities by the formula $A(z)=v^{-1}(qz)Zv(z)$.

2) There is a one-to-one correspondence between classes of nondegenerate $(G,q)$-Wronskians with regular singularities as stated in the Proposition \ref{nonuniqdef} and nondegenerate $Z$-twisted Miura $(G,q)$-opers with regular singularities parametrized by the same $\{\Lambda_i\}_{i=1,\dots,r}$, such that zeroes of the polynomials in the extended $QQ$-system are $q$-distinct from $\{\Lambda_i\}_{i=1,\dots,r}$. 
\end{Thm}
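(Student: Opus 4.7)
For part (1), my strategy is to chain together the correspondences already available. Given a nondegenerate generalized $q$-Wronskian $(\cF_G, \cF_{B_+}, \mathscr{G}, Z)$ with regular singularities determined by $\{\Lambda_i\}$, I set
\[
Q_+^i(z) := \Delta_{\omega_i, \omega_i}(\mathscr{G}(z)), \qquad Q_-^i(z) := \Delta_{w_i\omega_i, \omega_i}(\mathscr{G}(z)),
\]
which by the nondegeneracy hypothesis are nonzero polynomials whose zeros are $q$-distinct from one another and from the zeros of the $\Lambda_k$. Part (2) of Proposition \ref{propfundrel} is exactly the $QQ$-system \eqref{qq} for this data, and Theorem \ref{inj} then produces a nondegenerate $Z$-twisted Miura-Pl\"ucker $(G,q)$-oper, whose connection I denote $\widetilde A(z)$. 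The full nondegeneracy of the minors $\Delta_{w\omega_i,\omega_i}(\mathscr{G})$ together with Proposition \ref{weylwr} applied to $\tilde u\cdot\mathscr{G}(z)$ for each $u\in W_G$ provides nondegenerate polynomial solutions to all of the $QQ$-systems produced by B\"acklund transformations (Lemma \ref{nondegcond}); consequently $\widetilde A$ is $W_G$-generic, and Theorem \ref{w0} upgrades it to a genuine $Z$-twisted Miura $(G,q)$-oper.

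It remains to identify $\widetilde A$ with the explicit connection $A(z)$ built from the Gaussian factor of $\mathscr{G}$. The Corollary following the definition of nondegenerate generalized $q$-Wronskian guarantees $\mathscr{G}(z)=v(z)u(z)$ with $v(z)\in B_-(z)$, $u(z)\in N_+(z)$. I would then compare this $v(z)$ with the trivializing element displayed in \eqref{vdots}, using that both are characterized by the same leading minor data $Q_+^i$, $Q_-^i$ on each $\nu_{\omega_i}^+$, so that up to the non-uniqueness in Proposition \ref{nonuniqdef} they coincide; hence $\widetilde A(z)=v(qz)Zv(z)^{-1}$, matching the formula of the statement.

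For part (2), the reverse direction starts from a nondegenerate $Z$-twisted Miura $(G,q)$-oper $A(z)=v(qz)Zv(z)^{-1}$ with $v(z)\in B_-(z)$. The full collection $\{Q_+^{w,i}(z)\}_{w\in W_G,\, i}$ of the full $QQ$-system is obtained by iterated B\"acklund transformation, under the $q$-distinctness hypothesis in the theorem statement. I prescribe these as the generalized minors of the desired $\mathscr{G}(z)$: by Theorem 1.12 of \cite{FZ} this determines $\mathscr{G}(z)\in G(z)$ up to right multiplication by an element $n_+(z)$ with $s\,n_+(z)\,s^{-1}\in N_+(z)$, which is exactly the ambiguity of Proposition \ref{nonuniqdef}. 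To verify that any such $\mathscr{G}(z)$ solves \eqref{eq:qwr}, I would run Proposition \ref{propfundrel} backwards: the $QQ$-identities \eqref{eq:sminors1} on the prescribed minors force the shift relation \eqref{eq:DefLF1}, which is exactly the assertion that $Z^{-1}\mathscr{G}(qz)\nu^+_{\omega_i}$ and $\mathscr{G}(z) s_{\phi}(z)^{-1}\nu^+_{\omega_i}$ agree on the top $W_G$-orbit of weights; the lower-weight components match automatically given that they are fixed by the same Gaussian decomposition data for $\mathscr{G}(z)$.

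\textbf{Main obstacle.} The most delicate point is the match between the Gaussian factor $v(z)$ of $\mathscr{G}(z)$ and the trivializing element of the Miura oper. The two leading factors in \eqref{vdots}, namely the diagonal $\prod_i[Q_+^i]^{\check\alpha_i}$ and the first exponentials $\exp\!\bigl(-\tfrac{Q_-^i}{Q_+^i}f_i\bigr)$, are determined by the minors $\Delta_{\omega_i,\omega_i}(\mathscr{G})$ and $\Delta_{w_i\omega_i,\omega_i}(\mathscr{G})$; but the higher commutator terms in $\mathfrak{n}_-$ hidden in the dots of \eqref{vdots} must be read off from the action of $\mathscr{G}(z)$ on lower-weight components of each $\nu^+_{\omega_i}$, and matching them rigorously with the $v$ supplied by Theorem \ref{w0} requires invoking the higher generalized minor identities of \cite{FZ} beyond the two-term quadratic relation \eqref{eq:minorsgen}. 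The same issue appears in reverse in part (2), where one must ensure that the $\mathscr{G}(z)$ reconstructed from the prescribed minors does not pick up spurious factors inconsistent with \eqref{eq:qwr} on these lower strata.
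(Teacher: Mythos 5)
Your part (1) is an acceptable, if more roundabout, route: chaining Proposition \ref{propfundrel}, Theorem \ref{inj}, Proposition \ref{weylwr} and Theorem \ref{w0} reproduces what the paper does more directly, namely that the full collection $\{\Delta_{w\cdot\omega_i,\omega_i}(\mathscr{G}(z))\}$ realizes the extended $QQ$-system and, by Theorem 1.12 of \cite{FZ}, determines the Gaussian factor $v(z)$ entirely. The ``main obstacle'' you flag there (matching the higher commutator terms in \eqref{vdots} with the data of $\mathscr{G}$) is in fact resolved by that same citation: once all twisted minors $\Delta_{w\cdot\omega_i,\omega_i}$ are pinned down, the element of the cell is unique up to the ambiguity of Proposition \ref{nonuniqdef}, so there is nothing left to match. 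You should state this as the resolution rather than leave it as an open difficulty.

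The genuine gap is in part (2). You reconstruct $\mathscr{G}(z)$ from the prescribed minors $Q_+^{w,i}$ and then propose to verify the difference equation \eqref{eq:qwr} by ``running Proposition \ref{propfundrel} backwards.'' This fails on two counts. First, the logical flow in Proposition \ref{propfundrel} is: difference equation $\Rightarrow$ shift relation \eqref{eq:DefLF1} $\Rightarrow$ (combined with the universally valid identity \eqref{eq:minorsgen}) the $QQ$-relations \eqref{eq:sminors1}; knowing \eqref{eq:sminors1} and \eqref{eq:minorsgen} does not invert to give \eqref{eq:DefLF1} for all $w$. Second, and more seriously, even if \eqref{eq:DefLF1} held for every $w$, it only equates the components of $Z^{-1}\mathscr{G}(qz)\nu^+_{\omega_i}$ and $\mathscr{G}(z)s_\Lambda^{-1}(z)\nu^+_{\omega_i}$ along the extremal weight spaces $W_G\cdot\omega_i$; the equation \eqref{eq:qwr} is an identity of vectors in all of $V_i^+$, and the non-extremal components are not controlled by generalized minors. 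Your assertion that they ``match automatically'' is exactly the missing step. The paper's proof avoids this entirely by a direct construction: writing the oper condition as $A^{-1}(z)=n_+(z)s_{\Lambda}^{-1}(z)\tilde{n}_+(z)$ and the twist condition as $Z^{-1}v(qz)=v(z)A^{-1}(z)$, one sets $\mathscr{G}(z)=v(z)n_+(z)$ and checks \eqref{eq:Zgv} on the nose, since $n_+(qz)$ and $\tilde{n}_+(z)$ fix $\nu^+_{\omega_i}$; the residual choice of $n_+(z)$ is precisely the ambiguity of Proposition \ref{nonuniqdef}, which gives the bijection on equivalence classes. You should replace your reconstruction-from-minors argument in part (2) with this explicit construction, or else supply the missing control of the lower-weight strata.
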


\begin{proof}
$(\mathcal{F}_G, \mathcal{F}_{B+}, \mathscr{G}, Z)$ be a non-degenerate $(G,q)$-Wronskian with regular singularities parametrized by the polynomials $\{\Lambda_i(z)\}_{i=1,\dots, r}$. 
Let us apply the relation (\ref{eq:sminors1}) from the Proposition \ref{propfundrel} to $\tilde{w}\cdot\mathscr{G}(z)$ for all $w\in W$. By Proposition \ref{weylwr} we know that $\tilde{w}\cdot\mathscr{G}(z)$ is a 
$(G,q)$-Wronskian $(\mathcal{F}_G, \mathcal{F}_{B+}, \mathscr{G}, w(Z))$. Thus Proposition \ref{propfundrel} implies that generalized minors $\Delta_{w\cdot\omega_i,\omega_i}$ generate the full $QQ$-system through the generalization of identification (\ref{identif}):
\begin{eqnarray}
\Delta_{w\cdot \omega_i, \omega_i}(\mathscr{G}(z))=\Delta_{ \omega_i, \omega_i}(\tilde{w}^{-1}\mathscr{G}(z))\to Q_+^{w,i}(z).
\end{eqnarray}
The resulting minors $\Delta_{w\cdot\omega_i,\omega_i}(\mathscr{G}(z))$ determine $v(z)$ entirely and thus produce an element which defines $Z$-twisted Miura $(G,q)$-oper as stated in the theorem. That proves part 1). To prove part 2) let us construct $\mathscr{G}(z)$ explicitly given $Z$-twisted Miura $(G,q)$-oper, so that its $q$ -connection is given by the formula
\begin{equation}\label{triv}
A(z)=v^{-1}(qz)Zv(z),
\end{equation}
where $v(z)\in B_-(z)$. 
Note, that 
\begin{eqnarray}\label{invopcond}
A^{-1}(z)=n_+(z)s_{\Lambda}^{-1}(z)\tilde{n}_+(z), \quad  n_+(z), \tilde{n}_+(z)\in N_+(z).
\end{eqnarray}
Thus, combining  (\ref{triv}), (\ref{invopcond}) we obtain
\begin{eqnarray}
Z^{-1}v(qz)=v(z)n_+(z)s_{\Lambda}^{-1}(z)\tilde{n}_+(z)
\end{eqnarray}
and 
\begin{equation}\label{formulaforg}
\mathscr{G}(z)=v(z)n_+(z)
\end{equation}
satisfies the familiar equation
\begin{eqnarray}\label{eq:Zgv}
Z^{-1}\mathscr{G}(qz)\nu^+_{\omega_i}=\mathscr{G}(z)s_{\Lambda}^{-1}(z)\nu^+_{\omega_i}.
\end{eqnarray}
Notice, that the constructed $\mathscr{G}(z)$ is defined modulo the transformations from the Proposition \ref{nonuniqdef}. This is related to the fact that the choice of $n_+(z)$ in the gauge class of $A^{-1}(z)=n_+(z)s_{\Lambda}^{-1}(z)\tilde{n}_+(z)$ is non-unique but again is up to the multiplication on the elements from Proposition \ref{nonuniqdef}. This proves the second part of the Theorem.
\end{proof}

In the next section, we will introduce the unique element in the family of $(G,q)$-Wronskians corresponding to a given Miura $(G,q)$-oper, which is a generalization of a standard $q$-Wronskian considered in \cite{KSZ} for any simply-connected simple group $G$.

\subsection{Universal quantum Wronskian for Miura $(G,q)$-oper}

In this section, we assume that the Lie group $G$ has an even Coxeter number $h$ and a choice of a Coxeter element is such that $c^{h/2}=w_0$. That only excludes $SL(N)$ case for $N$ odd, which was studied in detail in \cite{KSZ, Koroteev:2020tv}. 

The  $Z$-twisted condition for $(G,q)$-oper, which was instrumental in our considerations can be restated in the following way:
$$
Z^{-1}g(qz)=g(z)A^{-1}(z).
$$ 
One could iterate this relation to introduce a collection of relations of the following form:
\begin{eqnarray}\label{iter}
Z^{-k}g(q^{k+1}z)=g(z)A^{-1}(z)A^{-1}(qz)\dots A^{-1}(q^kz)
\end{eqnarray}

The following Lemma is true and a direct consequence of the property of the multiplication of Bruhat cells.

\begin{Lem}\label{Th:LemmaAAA}
The product $$A^{-1}(z)A^{-1}(qz)\dots A^{-1}(q^kz)$$ belongs to the Bruhat cell $B^+(z)s^{-k}B^+(z)$ as long as $0\le k<h/2$, where $h$ is the Coxeter number of G.
\end{Lem}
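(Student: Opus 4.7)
The plan is to argue inductively on $k$, combining two ingredients: the defining condition of a $(G,q)$-oper, which places each $A^{-1}(q^j z)$ in the Coxeter Bruhat cell $B_+(z) s^{-1} B_+(z)$, and the classical multiplication rule for Bruhat cells, namely $B_+ w_1 B_+ \cdot B_+ w_2 B_+ = B_+ w_1 w_2 B_+$ whenever $\ell(w_1 w_2) = \ell(w_1) + \ell(w_2)$. The base case is immediate from Definition \ref{qop}: $A(z) \in B_+(z) c B_+(z)$, hence $A^{-1}(z) \in B_+(z) s^{-1} B_+(z)$.

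For the inductive step, assuming the partial product of the first $k$ factors lies in $B_+(z) s^{-k} B_+(z)$, multiplication by $A^{-1}(q^k z) \in B_+(z) s^{-1} B_+(z)$ should land in $B_+(z) s^{-(k+1)} B_+(z)$, provided the length-additivity $\ell(c^{k+1}) = \ell(c^k) + r$ holds at each step. This is precisely where the Coxeter number enters. Under the standing hypothesis $c^{h/2} = w_0$, the identity $\ell(w_0) = rh/2$ forces $\ell(c^k) = kr$ for all $0 \le k \le h/2$: the iterated word $(w_{i_1} \cdots w_{i_r})^k$ has length $kr$, represents $c^k$, and therefore must already be reduced. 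Along this range, the Bruhat multiplication rule can be applied in each inductive step without degeneration.

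The main obstacle is extending past the midpoint into the range $h/2 < k < h$. Here $c^{-k} = (c^k)^{-1}$ has length $\ell(c^{h-k}) = (h-k)r < kr$, so the naive length-additivity genuinely fails and the product of Bruhat cells would a priori spread into a union of smaller cells rather than a single one. To salvage the single-cell conclusion one must invoke more than mere membership in $B_+ s^{-1} B_+$: the canonical form of Theorem \ref{canform} restricts $A^{-1}(q^j z)$ to a distinguished slice of the Coxeter Bruhat cell. The idea I would pursue is to rewrite the partial product at $k = h/2$ using $s^{h/2}$ as a fixed lift of $w_0$, absorb the resulting Cartan correction into the $B_+$ decorations, and then reapply the first-half argument to the ``reflected'' cofactor $c^{k - h/2}$, which sits back in the good range $0 \le k - h/2 < h/2$. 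The delicate point is tracking how the $q$-shifts interact with this reflection about the midpoint of the Coxeter orbit and verifying that the effective factor still satisfies the Coxeter oper constraint required for the Bruhat multiplication law to apply.
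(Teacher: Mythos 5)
Your first half is correct, and it is essentially all the paper itself offers: the authors give no argument beyond asserting that the Lemma is ``a direct consequence of the property of the multiplication of Bruhat cells,'' and your observation that $c^{h/2}=w_0$ together with $\ell(w_0)=rh/2$ forces the word $(w_{i_1}\cdots w_{i_r})^k$ to be reduced, hence $\ell(c^{-(k+1)})=\ell(c^{-k})+\ell(c^{-1})$ and $B_+c^{-k}B_+\cdot B_+c^{-1}B_+=B_+c^{-(k+1)}B_+$, is exactly the mechanism that makes the statement routine in the range $k\le h/2$. (You have also, reasonably, corrected an off-by-one in the statement as printed: a product of $m$ factors should sit in the cell of $c^{-m}$, as is clear from how the Lemma is invoked in the proof of Proposition \ref{Th:DifferenceEqWr}.)

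The gap is the range $h/2<k<h$, and it is a genuine one that your sketch does not close. Once the partial product reaches $B_+w_0B_+$, multiplying by a further factor of $B_+c^{-1}B_+$ gives, by the very multiplication law you are relying on, a \emph{union} of cells --- for a simple reflection $w_j$ with $\ell(w_0w_j)<\ell(w_0)$ one has $B_+w_0B_+\cdot B_+w_jB_+=B_+w_0B_+\cup B_+w_0w_jB_+$ --- and the dominant stratum is the open cell $B_+w_0B_+$, not a smaller one. Since $\ell(c^{-k})=(h-k)r<kr$ in this range, the assertion that the product lands in the strictly smaller cell $B_+c^{-k}B_+$ is a non-generic degeneration: already for $SL(4)$ with $k=3$ the target cell $B_+cB_+$ has codimension $3$, while a generic product of three elements of $B_+c^{-1}B_+$ lies in the open cell. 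So no amount of Bruhat-cell combinatorics applied to arbitrary elements of $B_+c^{-1}B_+$ can prove the claim past the midpoint; one must use the canonical form of Theorem \ref{canform}, writing $A^{-1}(q^jz)=n^1_+(q^jz)s^{-1}(q^jz)$ with $s\,n^1_+\,s^{-1}\in N_-$, pushing the $s^{-1}$'s to the right so that the $j$-th unipotent factor is conjugated by $c^{-j}$, and then invoking the root-theoretic fact that the sets $c^{-j}\{\alpha>0:\ c\alpha<0\}$, $j=0,\dots,h/2-1$, partition the positive roots --- which is precisely what controls whether these conjugates remain in $N_+$ or cross into $N_-$ for $j\ge h/2$. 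Your ``reflection about the midpoint'' gestures at this, but as written it is a plan rather than a proof: you neither specify how the conjugated factors decompose for $j\ge h/2$ nor verify that the resulting $N_-$ pieces can be absorbed without leaving the cell of $c^{-k}$. Until that is done, the Lemma is established only for $k\le h/2$.
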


Let us use now the system of equations (\ref{iter}) to construct a universal $(G,q)$-Wronskian element associated with a given $Z$-twisted Miura $(G,q)$-oper in a similar way we did with the first of them in the proof of Theorem \ref{1to1wrop}. Namely, the following Theorem is true.
  
\begin{Prop}\label{Th:DifferenceEqWr}
For a given $Z$-twisted $(G,q)$-Miura oper, there exists a unique  $(G,q)$-Wronskian 
$$\mathscr{W}(z)\in B_-(z)w_0B_-(z)\cap B_+(z)w_0B_+(z)\subset G(z),$$ satisfying the system of equations 
\begin{eqnarray}\label{eq:ZWeq}
&&\mathscr{W}(q^{k+1}z)\nu^+_{\omega_i}=Z^{k}\mathscr{W}(z)s^{-1}(z)s^{-1}(qz)\dots s^{-1}(q^kz)\nu^+_{\omega_i}\,,\nonumber\\
&& i=1,\dots, r, \qquad k=0, 1, \dots, h/2-1,
\end{eqnarray}
where $h$ is the Coxeter number of $G$.
\end{Prop}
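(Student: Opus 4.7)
My plan is to extend the Wronskian $\mathscr{G}(z)$ of Theorem~\ref{1to1wrop}, which only satisfies the $k=0$ case of \eqref{eq:ZWeq}, to a single element $\mathscr{W}(z)$ satisfying the whole tower \eqref{eq:ZWeq}, and to show that this tower exhausts the Proposition~\ref{nonuniqdef} ambiguity, pinning down $\mathscr{W}(z)$ uniquely inside the double Bruhat cell. This parallels the $SL(N)$ picture of \cite{KSZ}, where $\mathscr{W}(z)$ is the $q$-Wronskian matrix assembled from successive $q$-shifts of a single meromorphic section.

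For existence, I would write $A(z)=g(qz)Zg(z)^{-1}$ with $g(z)\in G(z)$ and use the canonical form $A^{-1}(z)=n_+(z)s_\Lambda^{-1}(z)\tilde n_+(z)$ from \eqref{invopcond}. Iterating the $Z$-twist produces \eqref{iter}; by Lemma~\ref{Th:LemmaAAA}, for $0\le k<h-1$ the iterated product lies in $B_+(z)s^{-(k+1)}B_+(z)$ and so admits a unique factorization
\begin{equation*}
A^{-1}(z)A^{-1}(qz)\cdots A^{-1}(q^kz)=U_k(z)\,s_\Lambda^{-1}(z)s_\Lambda^{-1}(qz)\cdots s_\Lambda^{-1}(q^kz)\,\tilde U_k(z),\quad U_k,\tilde U_k\in N_+(z).
\end{equation*}
Setting $\mathscr{W}(z)=g(z)\hat u(z)$ with $\hat u(z)\in N_+(z)$ and substituting into \eqref{eq:ZWeq}, each $k$-th equation reduces---since $\tilde U_k(z)$ stabilizes $\nu^+_{\omega_i}$---to a relation prescribing the action of $\hat u(q^{k+1}z)$ on $\nu^+_{\omega_i}$ in terms of $\hat u(z)$, $U_k(z)$, and the Coxeter shifts $s_\Lambda^{-1}(q^j z)\nu^+_{\omega_i}$. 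These constraints can be solved recursively on the Gauss-triangular coordinates of $\hat u(z)$.

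For uniqueness, the ambiguity of Proposition~\ref{nonuniqdef} corresponds to $\hat u(z)\mapsto \hat u(z)n'(z)$ with $s\,n'(z)\,s^{-1}\in N_+(z)$. Compatibility of $\hat u(z)n'(z)$ with the $k$-th equation of \eqref{eq:ZWeq} forces the product $s(q^k z)\cdots s(z)$, which lifts $c^{k+1}$, to conjugate $n'(z)$ into $N_+(z)$. As $k$ ranges over $0,\ldots,h-1$, this cycles through all powers of the Coxeter element and in particular through $c^{h/2}=w_0$; since $w_0N_+w_0^{-1}=N_-$, the joint condition forces $n'(z)\in N_+(z)\cap N_-(z)=\{e\}$. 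The containment $\mathscr{W}(z)\in B_+(z)w_0B_+(z)\cap B_-(z)w_0B_-(z)$ is then guaranteed by the non-vanishing of all generalized minors $\Delta_{w\cdot\omega_i,\omega_i}(\mathscr{W}(z))=Q_+^{w,i}(z)$, itself ensured by the nondegeneracy of the associated Miura oper, together with the minor-nonvanishing criterion from Section~\ref{Sec:GeneralizedWronskians}.

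The main obstacle will be the compatibility check in the existence step: the $rh$ highest-weight constraints (one per fundamental weight and per $k$) must admit a simultaneous solution $\hat u(z)$. Consistency ultimately follows from telescoping the single $Z$-twist, but spelling out how the iterated Coxeter-lift shifts interact with the $W_G$-orbit of each $\nu^+_{\omega_i}$ at each step requires careful bookkeeping---this is precisely where the hypothesis $c^{h/2}=w_0$, which excludes $SL(N)$ with $N$ odd, enters essentially.
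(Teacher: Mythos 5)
Your setup for existence matches the paper's: iterate the $Z$-twist to get \eqref{iter}, invoke Lemma~\ref{Th:LemmaAAA} to place the product $A^{-1}(z)\cdots A^{-1}(q^kz)$ in the cell $N_+(z)\,s^{-1}(z)\cdots s^{-1}(q^kz)\,N_+(z)$, and seek $\mathscr{W}(z)=g(z)\hat u(z)$ with $\hat u(z)\in N_+(z)$. Your uniqueness argument, however, is genuinely different from (and more explicit than) the paper's, which only appeals to ``uniqueness of the decomposition of the $q$-oper.'' Your route --- observing that compatibility with the $k$-th equation forces the lift of $c^{k+1}$ to conjugate the residual ambiguity $n'(z)$ of Proposition~\ref{nonuniqdef} into $N_+(z)$, and that at $k+1=h/2$ this gives $n'\in w_0^{-1}N_+w_0\cap N_+=N_-\cap N_+=\{e\}$ --- is a clean and correct way to pin down uniqueness within the class, and it also makes transparent where the hypothesis $c^{h/2}=w_0$ is actually used (in the uniqueness and the double-cell containment, not, as you suggest at the end, in the compatibility of the existence step).

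The genuine gap is in existence, which is the heart of the proposition: ``these constraints can be solved recursively on the Gauss-triangular coordinates of $\hat u(z)$'' is an assertion, not an argument, and you yourself flag it as the main obstacle without resolving it. The paper closes exactly this gap by an explicit inductive construction. One first puts $A^{-1}(z)=n^1_+(z)s^{-1}(z)$ in the canonical form of Theorem~\ref{canform} (applied to $A^{-1}$), so that $s(z)n^1_+(z)s^{-1}(z)\in N_-(z)$, and sets $\hat u(z)=n^1_+(z)n^2_+(z)\cdots n^{h-1}_+(z)$, where each $n^k_+(z)$ is singled out by the normal-form condition $\prod_{l=k-1}^{0}s(q^lz)\cdot n^k_+(z)\cdot\prod_{l=0}^{k-1}s^{-1}(q^lz)\in N_-(z)$. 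The crucial computation --- absent from your plan --- is that the right-hand $N_+$ factor $\tilde n^k_+(z)$ in the Bruhat factorization is not merely a stabilizer of $\nu^+_{\omega_i}$ but is identically $1$, and that $n^k_+(z)$ is then forced to be the conjugate $\prod_{l=0}^{k-2}s^{-1}(q^lz)\,n^1_+(q^{k-1}z)\,\prod_{l=k-2}^{0}s(q^lz)$ as in \eqref{eq:n1nkrelation}; this is precisely the ``telescoping'' you allude to, and without it there is no reason a single $\hat u(z)$ satisfies all $h$ constraints simultaneously (appending $n^k_+$ must not disturb the equations already established for smaller $k$, which is exactly what the normal-form condition and the identity $\tilde n^k_+=1$ guarantee). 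Until that recursion is written down, your existence proof does not go through.
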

\begin{proof}
Let us use gauge transformations to reduce $A^{-1}(z)$ to the following form: 
$$A^{-1}(z)=n^1_+(z)s^{-1}(z),$$ 
where  $s(z)n^1_+(z)s^{-1}(z)\in N_-(z)$ by applying the version of Theorem \ref{canform} to $A^{-1}(z)$. We remind, that it is a unique element in the $N_+(z)$-gauge class of $(G,q)$-opers. Therefore, the element $\mathscr{W}^1(z)=g(z)n^1_+(z)$ satisfies \eqref{eq:ZWeq}. 

Now let us have a look at the 
product $A^{-1}(z)A^{-1}(qz)=n^1_+(z)s^{-1}(z)n^1_+(qz)s^{-1}(qz)$. This is an element from $N_+(z)s^{-1}(z)s^{-1}(qz)N_+(z)$ and thus can be written as $A^{-1}(z)A^{-1}(qz)=n^1_+(z)n^2_+(z)s^{-1}(z)s^{-1}(qz)\tilde{n}^2_+(z)$, so that
$$
s(qz)s(z)n^2_+(z)s^{-1}(z)s^{-1}(qz)\in N_-(z)\,,
$$
for some $n^2_+(z),\, \tilde{n}_+^2(z)\in N_+(z)$, and 
\begin{eqnarray}
s^{-1}(z)n^1_+(qz)s^{-1}(qz)=n^2_+(z)s^{-1}(z)s^{-1}(qz)\tilde{n}^2_+(z).
\end{eqnarray}
Multiplying by 
$s(qz)s(z)$ on both sides, we obtain:
\begin{eqnarray}
s(qz)n^1_+(qz)s^{-1}(qz)=s(qz)s(z)n^2_+(z)s^{-1}(z)s^{-1}(qz)\tilde{n}^2_+(z),
\end{eqnarray}
so that $\tilde{n}^2_+(z)=1$ and $n^1_+(qz)=s(z)n^2_+(z)s^{-1}(z)$. Thus we obtain that
$\mathscr{W}^2(z)=g(z)n^1_+(z)n^2_+(z)$ satisfies the second equation from  \eqref{eq:ZWeq}.

We can now proceed with the inductive step. Assume $\mathscr{W}^{k-1}(z)=g(z)n^1_+(z)\dots n_+^{k-1}(z)$ satisfies \eqref{eq:ZWeq}. 
According to Lemma \ref{Th:LemmaAAA}
$$
A^{-1}(z)A^{-1}(qz)\cdots A^{-1}(q^{k-2}z)\in N_+(z) s^{-1}(z)s^{-1}(qz)\cdots s^{-1}(q^{k-2}z) N_+(z)\,.
$$ 
Consider 
$$\prod_{l=0}^{k-1}A(q^l z)=n_+^1(z) n_+^2(z) \cdots n_+^{k-1}(z) \cdot s^{-1}(z)s^{-1}(qz)\cdots s^{-1}(q^{k-2}z) \cdot n_+^1(q^{k-1}z) s^{-1}(q^{k-1}z)\,.
$$
This product should belong to $N_+ \prod_{l=0}^{k-1} s^{-1}(q^l z) N_+$ so we can rewrite it as
$$
\prod_{l=1}^{k-1} n^l_+(z) \cdot n^k_+(z) \prod_{l=0}^{k-1} s^{-1}(q^l z) \cdot \tilde n_+^k(z)\,,
$$
provided that $\prod_{l=k-1}^{0} s(q^l z) \cdot n^k_+(z) \cdot \prod_{l=0}^{k-1} s^{-1}(q^l z)\in N_-(z)$ and
$$
\prod_{l=0}^{k-2} s^{-1}(q^l z) \cdot n_+^1(q^{k-1}z) s^{-1}(q^{k-1}z) = n^k_+(z) \cdot \prod_{l=0}^{k-1} s^{-1}(q^l z)\,  \tilde n_+^k(z)\,.
$$
Multiplying both parts by $\prod_{l=k-1}^{0} s(q^l z)$ we get 
$$
s(q^{k-1}z)n^1_+(q^{k-1}z)s^{-1}(q^{k-1}z) = \prod_{l=k-1}^{0} s(q^l z) \cdot n^k_+(z) \cdot \prod_{l=0}^{k-1} s^{-1}(q^l z)\,  \tilde n_+^k(z)\,,
$$
which implies that $\tilde n_+^k(z)=1$ and
\begin{equation}\label{eq:n1nkrelation}
n^1_+(q^{k-1}z) = \prod_{l=k-2}^{0} s(q^l z) \cdot n^k_+(z) \prod_{l=0}^{k-2} s^{-1}(q^l z)\,,
\end{equation}
which completes the inductive step so that $\mathscr{W}^{k}(z)=g(z)n^1_+(z)\dots n_+^{k}(z)$ satisfies \eqref{eq:ZWeq}.

In particular, the element 
$$\mathscr{W}^{h/2-1}(z)=g(z)n^1_+(z)n^2_+(z)\dots n_+^{h/2-1}(z),$$
satisfies all $h/2$ equations \eqref{eq:ZWeq}. Its uniqueness follows from the uniqueness of the decomposition of the $q$-oper $A(z)$.

\end{proof}

\begin{Rem}

Note that we use the relation \eqref{eq:n1nkrelation} to get 
$$n^k_+(z)=\prod_{l=0}^{k-2} s^{-1}(q^l z) n^1_+(q^{k-1}z)\prod_{l=k-2}^{0} s(q^l z)$$ 
for $k\geq 2$ and express the $(G,q)$-Wronskian in terms of $g(z), n^1_+(z)$ and $s(z)$ only:
\begin{equation}
\mathscr{W}^{k}(z)=g(z)\, n^1_+(z) \left(s^{-1}(z) n^1_+(q z) s(z) \right) \cdots  \left(\prod_{l=0}^{k-2} s^{-1}(q^l z) n^1_+(q^{k-1}z)\prod_{l=k-2}^{0} s(q^l z)\right)\,.
\end{equation}
\end{Rem}

\subsection{Example.}\label{Sec:ExplicitExamples}
Let $G=SL(r+1)$. The quantum Wronskian consists of $r+1$ columns
\begin{equation}
\mathscr{W}(z) =\left( \Delta_{\textbf{w} \omega,\omega} \Big\vert  \Delta_{\textbf{w}\omega,s^{-1}\omega} \Big\vert \dots \Big\vert  \Delta_{\textbf{w}\omega,s^{r+1}\omega}\right)(\mathscr{W}(z))\,,
\end{equation}
where 
We have $\nu^+_\omega = (1\, 0\,\dots\,0)$, $Z = \text{diag}(\xi_1,\dots,\xi_{n})$, where $\xi_1=\zeta_1, \xi_i = \zeta_{i}/\zeta_{i-1}$ for $i=2,\dots, r$ and $\xi_{r+1}=1/\zeta_r$.
According to \eqref{eq:SinvLiftCalculation} if we pick  standard ordering along the Dynkin diagram we have
$$
s^{-1}_{\Lambda}(z)= \tilde s^{-1} \prod_i \Lambda_i^{d_i}\,,
$$
where $d_i = \sum_{j=1}^i \check{\alpha}_j$ and
$$
\tilde s^{-1} = \begin{pmatrix}
0 & 0 & \dots & 0 & 1\\
1 & 0 & \dots & 0 & 0 \\
0 & 1 &\dots & 0 & 0 \\
\vdots & \vdots & \ddots & \cdots & \vdots\\
0 & 0 & \dots & 1 & 0
\end{pmatrix}\,,
$$
so that $\tilde s^{-l} \nu^+_\omega = (0,\dots, 0, 1\, 0\,\dots\,0)$, where $1$ is on the $l$th place.
Thus, according to Proposition \ref{propfundrel} the $q$-Wronskian reads
\begin{equation}\label{WexplSLn}
\mathscr{W}(z) =\left(Q^{\textbf{w}\cdot\omega}(z) \Big\vert  Z F_1(z)Q^{\textbf{w}\cdot\omega}(qz)\Big\vert \dots \Big\vert  Z^{r-1}F_{r-1}(q^{r-1}z)Q^{\textbf{w}\cdot\omega}(q^{r-1}z)\right)\,,
\end{equation}
where $F_i(z)=\prod_{j=1}^i \Lambda_j(z)^{-1}$. 

The conditions for the dual $(SL(r+1),q)$-oper, according to Theorem \ref{1to1wrop} can be formulated using the above matrix and they were first formulated in \cite{KSZ}, (see equation (4.8) albeit written in a slightly different convention and normalization). The condition corresponding to the whole $q$-Wronskian reads
$\text{det} \mathscr{W}(z)=1$, whereas the others can be readily written using minors of matrix $\mathscr{W}(z)$.

For the type A root system the relation \eqref{eq:minorsgen} reads
\begin{equation}\label{eq:minorssln}
\Delta_{u\omega_i, v\omega_i}\Delta_{us_i\omega_i, vs_i\omega_i}-
\Delta_{us_i\omega_i, v\omega_i}\Delta_{u\omega_i, vs_i\omega_i}=\Delta_{u\omega_{i-1}, v\omega_{i-1}}\Delta_{u\omega_{i+1}, v\omega_{i+1}}\,,
\end{equation}
which as we have shown previously are equivalent to the corresponding $QQ$-system. As was discussed in \cite{Koroteev:2017aa, KSZ} these equations can be reduced to the following determinant identity known from the 19th century (Desnanot-Jacobi-Lewis Carroll Identity) using matrix of the form \eqref{WexplSLn}. 
\begin{equation}\label{eq:LCarrol12i}
M^1_1 M^2_i- M^1_i M^2_1= M^{12}_{1i} M\,,
\end{equation}
where $M^a_b$ is the determinant of the quantum Wronskian matrix $\mathscr{W}(z)$ with the $a$th row and $b$th column removed and $M=\text{det}\mathscr{W}(z)$.

The identification between \eqref{eq:minorssln} and \eqref{eq:LCarrol12i} works as follows. We put $u=1$ and $v=s_1\cdot s_2\cdots s_{i-1}$. This way $vs_i = s_1\cdots s_{i}$ is the element that permutes the first the last column of matrix $M$ as well as
\begin{equation}
M = \Delta_{\omega_{i+1}, v\omega_{i+1}}\,,\quad M^1_1 = \Delta_{\omega_i, v\omega_i}\,,\quad M^2_i = \Delta_{s_i\omega_i, vs_i\omega_i}\,,\quad M^2_1 = \Delta_{s_i\omega_i, v\omega_i}\,,\quad M^1_i = \Delta_{\omega_i, vs_i\omega_i}
\end{equation}

In other words, after acting with element $v$ on the columns the Lewis Carroll identity can be presented in terms of principal minors
\begin{equation}
{\widetilde M}^1_1 {\widetilde M}^2_2- {\widetilde M}^1_2 {\widetilde M}^2_1= {\widetilde M}^{12}_{12} {\widetilde M}\,,
\end{equation}
where $\widetilde{M}=M\cdot v$.

\bibliography{cpn1}
\end{document}